\setlist{itemsep=0em} 
\setlist[enumerate]{label=(\roman*)}
\newif\ifbiber
\DeclareCiteCommand{\cite}{%
	\ifbibmacroundef{cite:init}{}{\usebibmacro{cite:init}}\usebibmacro{prenote}%
}{%
	\usebibmacro{citeindex}%
	\printtext[bibhyperref]{\usebibmacro{cite}}%
}{%
	\ifbibmacroundef{cite:init}{\multicitedelim}{}%
}{%
	\usebibmacro{postnote}%
}%
\DeclareCiteCommand{\parencite}[\mkbibbrackets]{%
	\ifbibmacroundef{cite:init}{}{\usebibmacro{cite:init}}\usebibmacro{prenote}%
}{%
	\usebibmacro{citeindex}%
	\printtext[bibhyperref]{\usebibmacro{cite}}%
}{%
	\ifbibmacroundef{cite:init}{\multicitedelim}{}%
}{%
	\usebibmacro{postnote}%
}%
\let\cite\parencite
\newcommand\norm[1]{\lVert#1\rVert}
\newcommand\abs[1]{\lvert#1\rvert}
\newcommand\dual[2]{\langle #1, #2\rangle}
\newcommand\bigdual[2]{\bigl\langle #1, #2\bigr\rangle}
\newcommand\Bigdual[2]{\Bigl\langle #1, #2\Bigr\rangle}
\newcommand\scalarprod[2]{( #1, #2)}
\newcommand{\Uad}{U_{\mathrm{ad}}}
\newcommand{\NNUad}{\NN_{\Uad}}
\newcommand\N{\mathbb{N}}
\newcommand\R{\mathbb{R}}
\newcommand\Q{\mathbb{Q}}
\newcommand\KK{\mathcal{K}}
\newcommand\LL{\mathcal{L}}
\newcommand\MM{\mathcal{M}}
\newcommand\NN{\mathcal{N}}
\renewcommand\d{\mathrm{d}}
\newcommand\dx{\d x}
\newcommand{\weakly}{\rightharpoonup}
\newcommand{\dualspace}{^\star}
\newcommand{\adjoint}{^\star}
\DeclareMathAlphabet{\mathpzc}{OT1}{pzc}{m}{it}
\newcommand\pBws{\partial_B^{ws}}
\newcommand\pBss{\partial_B^{ss}}
\newcommand\pBww{\partial_B^{ww}}
\newcommand\pBsw{\partial_B^{sw}}
\newcommand\capa{\operatorname{cap}}
\newcommand{\fsupp}{\operatorname{f-supp}}
\newcommand{\supp}{\operatorname{supp}}
\newcommand\toSOT{\stackrel{\textup{SOT}}{\longrightarrow}}
\newcommand\toWOT{\stackrel{\textup{WOT}}{\longrightarrow}}
\newcommand\togamma{\stackrel{\gamma}{\to}}
\newcommand\toGamma{\stackrel{\Gamma}{\to}}
\newtheorem{theorem}{Theorem}[section]
\newtheorem{lemma}[theorem]{Lemma}
\newtheorem{proposition}[theorem]{Proposition}
\newtheorem{corollary}[theorem]{Corollary}
\newtheorem{definition}[theorem]{Definition}
\crefname{assumption}{Assumption}{Assumptions}
\definecolor{darkgreen}{rgb}{0,0.5,0}
\definecolor{darkred}{rgb}{0.8,0,0}
\begin{document}
\title{Generalized derivatives for the solution operator of the obstacle problem}

\author{%
	Anne-Therese Rauls%
	\footnote{%
		Technische Universität Darmstadt,
		Department of Mathematics,
		Research Group Optimization,
		64293 Darmstadt,
		Germany,
		\url{http://www3.mathematik.tu-darmstadt.de/hp/ag-optimierung/rauls-anne-therese/startseite.html},
		\email{rauls@mathematik.tu-darmstadt.de}%
		}
	\and
	Gerd Wachsmuth%
	\footnote{%
		Brandenburgische Technische Universität Cottbus-Senftenberg,
		Institute of Mathematics,
		Chair of Optimal Control,
		03046 Cottbus,
		Germany,
		\url{https://www.b-tu.de/fg-optimale-steuerung},
		\email{gerd.wachsmuth@b-tu.de}%
	}%
	\orcid{0000-0002-3098-1503}%
}
\publishers{}
\maketitle

\begin{abstract}
	We characterize generalized derivatives
	of the solution operator of the obstacle problem.
	This precise characterization requires the usage of
	the theory of so-called capacitary measures
	and the associated solution operators of
	relaxed Dirichlet problems.
	The generalized derivatives can be used
	to obtain a novel necessary optimality condition for the optimal control of the obstacle problem
	with control constraints.
	A comparison shows that this system is stronger than the known system of C-stationarity.
\end{abstract}
 
\begin{keywords}
	obstacle problem,
	generalized derivative,
	capacitary measure,
	relaxed Dirichlet problem,
	C-stationarity
\end{keywords}
 
\begin{msc}
	\mscLink{49K40},
	\mscLink{47J20},
	\mscLink{49J52}, 
	\mscLink{58C20}
\end{msc}

\section{Introduction}
We consider the obstacle problem
\begin{equation}
	\label{eq:obstacle_problem}
	\text{Find } y \in K :
	\quad
	\dual{-\Delta y - u}{z - y} \ge 0
	\quad \forall z \in K.
\end{equation}
Here,
$\Omega \subset \R^d$ is a bounded, open set
and the closed, convex set $K \subset H_0^1(\Omega)$
is given by
\begin{equation*}
	K :=
	\{ v \in H_0^1(\Omega) \mid v \ge \psi \; \text{q.e.\ on } \Omega \},
\end{equation*}
where
$\psi : \Omega \to [-\infty, \infty)$ is a given quasi upper-semicontinuous function.
We assume $K \ne \emptyset$.
It is well known
that for each $u \in H^{-1}(\Omega)$
there exists a unique solution
$y := S(u) \in H_0^1(\Omega)$ of \eqref{eq:obstacle_problem}.
Moreover, the mapping $S : H^{-1}(\Omega) \to H_0^1(\Omega)$
is globally Lipschitz continuous.

Our main goal is the characterization of so-called generalized derivatives
of the mapping $S$.
That is, given $u \in H^{-1}(\Omega)$,
we are going to characterize the limit points of $S'(u_n)$,
where $\{u_n\}$ is a sequence of points in which $S$ is Gâteaux differentiable
and which converge towards $u$.
Since the involved spaces are infinite dimensional,
there is some choice concerning the topologies.
We will equip the space of operators with the weak or the strong operator topology
and on $H^{-1}(\Omega)$ we use the weak or strong topology.
When considering the weak topology on $H^{-1}(\Omega)$,
we also require that $\{S(u_n)\}$ converges weakly to $S(u)$.
At this point we also recall the famous result
\cite[Théorème~1.2]{Mignot1976}
which shows that $S$ is Gâteaux differentiable on a dense subset of $H^{-1}(\Omega)$.
Thus, each point $u \in H^{-1}(\Omega)$ can be approximated by
differentiability points of $S$.

The precise characterization of these generalized derivatives
will involve the notion of ``capacitary measures''
and ``relaxed Dirichlet problems''.
A comprehensive introduction to these topics will be given in \cref{Sec:capacitary_measures} below.
A Borel measure is a $\sigma$-additive set function on the Borel $\sigma$-algebra with values in $[0,\infty]$.
A capacitary measure $\mu$ is a Borel measure
which does not charge sets of capacity zero
and which satisfies a regularity condition, see \cref{def:capacitary_measure}.
For each capacitary measure $\mu$, we can consider the solution operator $u \mapsto y$ of
\begin{equation}
	\label{eq:relaxed_dirichlet_intro}
	-\Delta y + \mu \, y = u
\end{equation}
equipped with homogeneous Dirichlet boundary conditions,
see \cref{Sec:capacitary_measures} for the precise definition of this solution operator.
It is well known that the solution operator of \eqref{eq:relaxed_dirichlet_intro}
can be approximated (in the weak operator topology)
by the solution operators of
\begin{equation}
	\label{eq:dirichlet_intro}
	-\Delta y = u \text{ in } H^{-1}(\Omega_n), \qquad y \in H_0^1(\Omega_n)
\end{equation}
for some sequence of open sets $\Omega_n \subset \Omega$.
Moreover, each sequence of solution operators of \eqref{eq:dirichlet_intro}
converges (along a subsequence) to a solution operator of \eqref{eq:dirichlet_intro}
with an appropriate capacitary measure $\mu$.
This motivates to term \eqref{eq:relaxed_dirichlet_intro}
a relaxed Dirichlet problem.
Our analysis reveals that the generalized derivatives of $S$
are precisely sets of solution operators of \eqref{eq:relaxed_dirichlet_intro}
with appropriate conditions on $\mu$.

After we have established the characterization of the generalized derivatives,
we turn our attention to the optimal control of the obstacle problem
\begin{equation}
	\label{eq:optimal_control_obstacle}
	\text{Minimize} \quad J(y,u) \quad \text{with }  y = S(u)
	\text{ and } u \in \Uad.
\end{equation}
Here, $J : H_0^1(\Omega) \times L^2(\Omega) \to \R$
is assumed to be Fréchet differentiable with partial derivatives
$J_y$ and $J_u$,
and $\Uad \subset L^2(\Omega)$ is assumed to be closed and convex.
By a formal application of Lagrange duality, we arrive at
the stationarity system
\begin{equation}
	\label{eq:stat_system_intro}
	0 \in L\adjoint J_y(y,u) + J_u(y,u) + \NN_{\Uad}(u)
	\qquad
	\text{for some } L \in \partial_B S(u).
\end{equation}
Here, $\partial_B S(u)$ is a generalized differential of $S$ at $u$,
and $\NN_{\Uad(u)}$ is the normal cone in the sense of convex analysis of $\Uad$ at $u$.
We will see that
(for a certain choice of the involved topologies in the definition of $\partial_B S(u)$)
this system is slightly stronger than the so-called system of C-stationarity
from \cite{SchielaWachsmuth2013}.
Moreover, by inspecting the proof of
\cite{SchielaWachsmuth2013},
it is possible to strengthen this system of C-stationarity
such that it becomes equivalent to \eqref{eq:stat_system_intro}.
Therefore, our research leads to the discovery
of a new necessary optimality condition for \eqref{eq:optimal_control_obstacle}
which improves the known system of C-stationarity.

We put our work into perspective.
Our research was highly influenced by the recent contribution
\cite{ChristofClasonMeyerWalther2017}.
Therein, the authors considered the non-smooth partial differential equation
\begin{equation}
	\label{eq:nonsmooth_pde}
	-\Delta y + \max\{y, 0\} = u
\end{equation}
equipped with homogeneous Dirichlet boundary conditions.
They characterized generalized derivatives for the solution operator
mapping $u \mapsto y$.
Subsequently, these generalized derivatives are used to
derive and compare optimality conditions for the optimal control of \eqref{eq:nonsmooth_pde}.
Furthermore,
a single generalized gradient for the infinite-dimensional obstacle problem
was computed in \cite{RaulsUlbrich2018}.
This gradient is contained in all of the generalized derivatives
that we will consider and the approach gives a hint
how the generalized differential involving strong topologies
might look like.
The derivation there uses different tools
and while being able to treat also the variational inequality
\begin{equation*}
	\text{Find } y \in K :
	\quad
	\dual{-\Delta y - f(u)}{z - y} \ge 0
	\quad \forall z \in K.
\end{equation*}
for an appropriate monotone operator $f$ with range smaller than $H^{-1}(\Omega)$,
it is hard to characterize the entire generalized differential
involving strong topologies with this approach,
let alone those involving also weak topologies.
We are not aware of any other contribution
in which generalized derivatives of nonsmooth infinite-dimensional mappings are computed.
There is, however, a vast amount of literature
in the finite-dimensional setting.
We only mention
\cite{KlatteKummer2002,OutrataKocvaraZowe1998}.

Let us give an outline of this work. 
In the following section, 
we recall the relevant notions and results from capacity theory (\cref{subsec:capacity_theory}), 
recapitulate differentiability properties of the obstacle problem (\cref{subsec:known_results_obstacle}) and introduce the generalized differentials we are dealing with in this paper (\cref{subsec:generalized_differentials}). 
We review the concepts of capacitary measures, relaxed Dirichlet problems
and $\gamma$-convergence 
in \cref{Sec:capacitary_measures}. 
The generalized differentials
of the solution operator
to the obstacle problem
associated to the strong operator topology
will be established in \cref{sec:generalized_derivatives_SOT}.
Under additional regularity assumptions 
we characterize the generalized differential
involving the strong topology in $H^{-1}(\Omega)$
and the weak operator topology for the operators 
in \cref{sec:strong_weak_generalized_derivative}.
In \cref{sec:weak_weak_generalized_derivative}, 
we give an example to show 
that the generalized differential 
involving only weak topologies
can be very large,
even in points of differentiability.
Based on the developed characterizations of generalized derivatives,
we discuss stationarity systems
for the optimal control
of the obstacle problem
with control constraints
in \cref{sec:stationarity_systems}.

\section{Notation and known results}
\label{sec:notation_preliminaries}
In this work, $\Omega \subset \R^d$ is an open bounded set
in dimension $d \ge 2$.
By $H_0^1(\Omega)$,
we denote the usual Sobolev space.
Its norm is given by
$\norm{u}_{H_0^1(\Omega)}^2 = \int_\Omega \abs{\nabla u}^2 \, \dx$
and
the duality pairing between $H^{-1}(\Omega) := H_0^1(\Omega)\dualspace$
and $H_0^1(\Omega)$
is $\dual{\cdot}{\cdot}$.

We often deal with subsets of $\Omega$
that are defined only up to a set of capacity zero,
see also \cref{subsec:capacity_theory}.
As a consequence,
relations between such sets,
such as inclusions and equalities,
are meaningful only up to a set of capacity zero.
For subsets $B, C$ that are defined up to capacity zero,
we distinguish such relations by writing $B\subset_q C$, $B\supset_q C$ or $B=_q C$.
Similarly,
definitions of sets up to capacity zero,
such as the zero set of a family of quasi-continuous representatives,
see \cref{subsec:capacity_theory},
are denoted by ``$:=_q$''.

\subsection{Introduction to capacity theory}
\label{subsec:capacity_theory}
We collect some fundamentals on capacity theory. 
For the definitions, see e.g.\ \cite[Sections~5.8.2, 5.8.3]{AttouchButtazzoMichaille2014}, \cite[Definition~6.2]{DelfourZolesio2011} or \cite[Definition~6.47]{BonnansShapiro2000}.

\begin{definition}
	\begin{enumerate}
		\item 
		For every set $A \subset \Omega$
		the capacity
		(in the sense of $H_0^1(\Omega)$)
		is defined as
		\begin{equation*}
		\capa(A):=\inf \{\norm{u}_{H^1_0(\Omega)}^2: 
		u \in H^1_0(\Omega), 
		u\geq 1 \text{ a.e.\ in a neighborhood of } A\}.
		\end{equation*}
		\item 
		A subset
		$\hat{\Omega}\subset \Omega$ is called quasi-open
		if for all $\varepsilon>0$
		there is an open set $O_\varepsilon \subset \Omega$
		with $\capa(O_\varepsilon)<\varepsilon$
		such that $\hat{\Omega} \cup O_\varepsilon$ is open.
		The relative complement of a quasi-open set in $\Omega$ is called quasi-closed.
		\item
		A function $v \colon \Omega \to \overline\R = [-\infty,+\infty]$ is called quasi-continuous
		(quasi lower-semi\-continuous, quasi upper-semicontinuous, respectively)
		if for all $\varepsilon>0$
		there is an open set $O_\varepsilon\subset \Omega$
		with $\capa(O_\varepsilon)<\varepsilon$
		such that $v$ is continuous
		(lower-semicontinuous, upper-semicontinuous, respectively)
		on $\Omega \setminus O_\varepsilon$.
	\end{enumerate}
\end{definition}

If a property holds on $\Omega$ except on a set of zero capacity,
we say that this property holds quasi-everywhere (q.e.) in $\Omega$.
It is well known that each $v \in H_0^1(\Omega)$ possesses a quasi-continuous representative,
which is uniquely determined up to values on a set of zero capacity,
see e.g.\ \cite[Lemma~6.50]{BonnansShapiro2000}
or \cite[Chapter~8, Theorem~6.1]{DelfourZolesio2011}.
Moreover, the proof in the former reference yields that this representative
can be chosen to be even Borel measurable.
From now on, we will always use quasi-continuous and Borel measurable representatives
when working with functions from $H_0^1(\Omega)$.

Similarly, every quasi lower-/upper-semicontinuous function can be made Borel measurable
by a modification on a set of capacity zero.
Indeed, for a quasi upper-semicontinuous function $\psi$, the sets $\{\psi < q\}$
are quasi-open for all $q \in \Q$.
Hence, there are Borel sets $O_q$ of capacity zero, such that
$\{\psi < q\} \cup O_q$ is a Borel set for each $q \in \Q$.
By setting $\psi$ to $-\infty$ on $\bigcup_{q \in \Q} O_q$,
the function is still quasi upper-semicontinuous and becomes Borel measurable.
W.l.o.g., we will assume that the obstacle $\psi$ is Borel measurable.

\begin{lemma}
	\label{lem:pointwise_quasi_everywhere_convergence_of_a_subsequence}
	Let $\{v_n\} \subset H_0^1(\Omega)$, $v \in H_0^1(\Omega)$
	and assume that $v_n \to v$ in $H_0^1(\Omega)$.
	Then there is a subsequence of $\{v_n\}$,
	such that the sequence (of quasi-continuous representatives of) $\{v_n\}$ converges pointwise quasi-everywhere to (the quasi-continuous representative of) $v$.
\end{lemma}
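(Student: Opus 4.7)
The strategy is the standard one for passing from norm convergence to pointwise convergence, adapted from measure-theoretic almost-everywhere convergence to the capacity setting. The key ingredient is a Chebyshev-type inequality for capacity: for every $w \in H_0^1(\Omega)$ (using its quasi-continuous representative) and every $\lambda > 0$,
\begin{equation*}
	\capa(\{\abs{w} > \lambda\}) \le \frac{C}{\lambda^2} \, \norm{w}_{H_0^1(\Omega)}^2,
\end{equation*}
for some absolute constant $C$. This follows from the definition of capacity because $\abs{w}/\lambda \in H_0^1(\Omega)$ is admissible (up to smoothing and the fact that $\{\abs{w} > \lambda\}$ is quasi-open so its capacity is realized via quasi-continuous admissible functions). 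I would either invoke this from the references cited at the beginning of \cref{subsec:capacity_theory} or sketch it quickly by noting that a quasi-continuous representative of $\abs{w}/\lambda$ is $\ge 1$ q.e.\ on $\{\abs{w}>\lambda\}$.

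First, I would pass to a fast subsequence, still denoted $\{v_{n_k}\}$, such that $\norm{v_{n_k} - v}_{H_0^1(\Omega)} \le 2^{-k}$. Setting $E_k :=_q \{\abs{v_{n_k} - v} > 2^{-k/2}\}$ and applying the Chebyshev estimate to $w = v_{n_k} - v$ with $\lambda = 2^{-k/2}$ yields
\begin{equation*}
	\capa(E_k) \le C \, 2^k \cdot 2^{-2k} = C \, 2^{-k}.
\end{equation*}
Then I would use the countable subadditivity of capacity (a standard property, see the references in \cref{subsec:capacity_theory}) to define $A_m :=_q \bigcup_{k \ge m} E_k$ and obtain $\capa(A_m) \le C \, 2^{-(m-1)}$, so that the set $N :=_q \bigcap_{m \in \N} A_m$ satisfies $\capa(N) = 0$.

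For every $x \in \Omega \setminus N$, there exists $m = m(x)$ with $x \notin A_m$, which means $\abs{v_{n_k}(x) - v(x)} \le 2^{-k/2}$ for all $k \ge m$; hence $v_{n_k}(x) \to v(x)$. This delivers pointwise convergence q.e.\ on $\Omega$ for the chosen subsequence. The only subtle point in the argument is the Chebyshev inequality for capacity together with the fact that the quasi-continuous representative is the ``correct'' object with which to define the exceptional sets $E_k$; once this is granted, the Borel--Cantelli-type assembly is routine given countable subadditivity of $\capa$.
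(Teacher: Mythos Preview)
Your argument is correct and is exactly the standard Chebyshev-for-capacity plus Borel--Cantelli proof of this fact. The paper itself does not give a proof at all: it simply cites \cite[Lemma~6.52]{BonnansShapiro2000}, and the argument you have written is precisely the one underlying that reference. The only technical point worth tightening is the Chebyshev step, since the paper's definition of $\capa$ requires an admissible function to be $\ge 1$ a.e.\ on an \emph{open} neighborhood of the set, not merely q.e.\ on the set; but passing from the q.e.\ version to the neighborhood version (e.g.\ via the equivalent characterization $\capa(A)=\inf\{\norm{u}_{H_0^1(\Omega)}^2 : u\ge 1 \text{ q.e.\ on }A\}$ for arbitrary $A$) is standard and available in the same references, so this does not affect the validity of your proof.
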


\begin{proof}
	See \cite[Lemma~6.52]{BonnansShapiro2000}.
\end{proof}

It is possible to extend the Sobolev space $H_0^1(\Omega)$
to quasi-open subsets $\hat{\Omega} \subset \Omega$
by setting 
\begin{equation*}
H_0^1(\hat{\Omega}):=
\{v \in H_0^1(\Omega): v=0 \text{ q.e.\ on } \Omega \setminus \hat{\Omega}\}.
\end{equation*}
We point out that this definition is consistent with the usual definition
of $H_0^1(\hat\Omega)$ in the case that $\hat\Omega$ is open,
see \cite[Theorem~4.5]{HeinonenKilpelaeinenMartio1993}.
By \cref{lem:pointwise_quasi_everywhere_convergence_of_a_subsequence}, 
the space $H_0^1(\hat\Omega)$ is a closed subspace of $H_0^1(\Omega)$.

\begin{lemma}
	\label{lem:quasi-covering}
	Let $\hat{\Omega} \subset \Omega$ be quasi-open
	and assume there is a sequence of quasi-open sets $\{\hat\Omega_n\}$
	such that $\{\hat{\Omega}_n\}$ is increasing in $n$
	and such that $\hat{\Omega}=_q\bigcup_{n=1}^\infty \hat{\Omega}_n$.
	Let $v \in H_0^1(\hat{\Omega})$. 
	Then there is a sequence $\{v_n\}$ 
	with $v_n \in H_0^1(\hat{\Omega}_n)$ for each $n \in \mathbb{N}$
	such that $v_n \to v$ in $H_0^1(\Omega)$.
	Furthermore, 
	it holds $\sup|v_n|\leq\sup|v|$.
\end{lemma}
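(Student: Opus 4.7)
The plan is to combine standard reductions with a density argument for $\bigcup_n H_0^1(\hat\Omega_n)$ in $H_0^1(\hat\Omega)$, and to recover the $L^\infty$-bound by truncating at the end. First, I would split $v = v^+ - v^-$: since $v = 0$ q.e.\ on $\Omega \setminus \hat\Omega$, the same holds for $v^\pm$, so $v^\pm \in H_0^1(\hat\Omega)$, and by linearity it suffices to treat the case $v \geq 0$. Then, for non-negative $v \in H_0^1(\hat\Omega)$, the truncations $v^{(k)} := \min(v, k) \in H_0^1(\hat\Omega) \cap L^\infty(\Omega)$ converge to $v$ in $H_0^1(\Omega)$, so a diagonal argument further reduces the main construction to bounded non-negative $v$.

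\textbf{Main approximation via orthogonal projection.} The heart of the lemma is the density of $\bigcup_n H_0^1(\hat\Omega_n)$ in $H_0^1(\hat\Omega)$. My approach would be through the $H_0^1$-orthogonal projections $P_n$ onto the closed subspaces $H_0^1(\hat\Omega_n)$ (which are closed in $H_0^1(\Omega)$ by \cref{lem:pointwise_quasi_everywhere_convergence_of_a_subsequence}). Setting $\tilde v_n := P_n v$, Hilbert-space monotonicity yields $\tilde v_n \to \tilde v_\infty$ in $H_0^1(\Omega)$, where $\tilde v_\infty$ is the projection of $v$ onto $H_\infty := \overline{\bigcup_n H_0^1(\hat\Omega_n)}$. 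It remains to show $w := v - \tilde v_\infty = 0$: by construction $w \in H_0^1(\hat\Omega)$ and $\int_\Omega \nabla w \cdot \nabla \varphi \, \dx = 0$ for every $\varphi \in H_0^1(\hat\Omega_n)$ and every $n$. To conclude $w = 0$, I would introduce auxiliary functions such as the torsion functions $\tau_n \in H_0^1(\hat\Omega_n)$ solving $-\Delta \tau_n = 1$, which by domain monotonicity and the maximum principle satisfy $0 \leq \tau_n \nearrow \tau$ pointwise q.e.\ with $\tau > 0$ q.e.\ on $\hat\Omega = \bigcup_n \hat\Omega_n$. Using $\tau_n$ together with capacity-theoretic estimates to build admissible test functions converging in $H_0^1(\Omega)$ to $w$ itself (for instance via $\min(w^+, k_n \tau_n) - \min(w^-, k_n \tau_n)$ with an appropriate diagonal choice $k_n \to \infty$), and then plugging them into the orthogonality relation, I would obtain $\int_\Omega |\nabla w|^2 \, \dx = 0$ and hence $w = 0$.

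\textbf{Sup bound.} Once a sequence $\tilde v_n \in H_0^1(\hat\Omega_n)$ with $\tilde v_n \to v$ in $H_0^1(\Omega)$ is available for the original (possibly unbounded) $v$, I set $v_n := T_M(\tilde v_n)$ with $T_M(u) := \max(-M, \min(u, M))$ and $M := \sup|v|$ (interpreting $T_\infty$ as the identity). The truncation $T_M$ is continuous on $H_0^1(\Omega)$, preserves membership in $H_0^1(\hat\Omega_n)$ because $T_M(0) = 0$ (so the vanishing-q.e.-outside condition is kept), and acts trivially on $v$ since $|v| \leq M$ q.e.; consequently $v_n \to v$ in $H_0^1(\Omega)$ and $|v_n| \leq M$, delivering the required bound. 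The decisive technical step is the density/orthogonality argument above: controlling the gradient contribution from the truncating auxiliaries $k_n \tau_n$ near the ``boundary'' $\{\tau_n \approx 0\}$ of $\hat\Omega_n$ is delicate, and ruling out a boundary-layer obstruction there is where any genuine capacity-theoretic work is concentrated.
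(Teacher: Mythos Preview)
Your overall architecture is sound: the lemma reduces to the density of $\bigcup_n H_0^1(\hat\Omega_n)$ in $H_0^1(\hat\Omega)$, and the $\sup$-bound is then recovered by a final truncation (which is correct, since $T_M$ is continuous on $H_0^1(\Omega)$ and preserves $H_0^1(\hat\Omega_n)$). The paper handles the density step by quoting \cite[Theorem~2.10 and Lemma~2.4]{KilpelaeinenMaly1992}, which build a partition of unity subordinate to a quasi-covering; this is a genuinely nontrivial capacity-theoretic construction, not a soft consequence of the setup.

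Your proposed replacement for that step, however, has a real gap, and it is precisely the one you flag as ``delicate''. You want to show that $w := v - P_{H_\infty} v$ vanishes by testing the orthogonality relation against $\varphi_n := \min(w^+, k_n\tau_n) - \min(w^-, k_n\tau_n)$. But for this you need $\varphi_n \to w$ in $H_0^1(\Omega)$, and that is exactly the density you are trying to prove (applied to $w$ instead of $v$). The orthogonality contributes nothing: if you can show $\varphi_n \to w$ with $\varphi_n \in H_0^1(\hat\Omega_n)$, then $w \in H_\infty$ directly and the projection detour is redundant; if you cannot, orthogonality does not help. Concretely, the energy identity for $g_{n,k} := (w^+ - k\tau_n)^+$ reads
\[
\|g_{n,k}\|_{H_0^1}^2
=
\int_\Omega \nabla g_{n,k}\cdot\nabla w^+\,\dx
- k\int_\Omega g_{n,k}\,\dx
+ k\,\dual{1+\Delta\tau_n}{w^+},
\]
because $g_{n,k} = w^+$ q.e.\ on $\fsupp(1+\Delta\tau_n) =_q \Omega\setminus\hat\Omega_n$; the last term carries the factor $k$, and you have no control forcing $\dual{1+\Delta\tau_n}{w^+}\to 0$ without already knowing the density. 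A clean, non-circular route is to observe that $H_\infty$ is a closed lattice ideal and invoke Stollmann's characterization (used later in the paper, \cref{lem:completion_h01_l2}) to write $H_\infty = H_0^1(\tilde\Omega)$; then $\hat\Omega_n \subset_q \tilde\Omega$ for all $n$ forces $\hat\Omega \subset_q \tilde\Omega$ by countable subadditivity of capacity, hence $H_\infty = H_0^1(\hat\Omega)$. Either cite Kilpel\"ainen--Mal\'y as the paper does, or use this lattice-ideal argument; the bare truncation-plus-diagonal scheme does not close.
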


\begin{proof}
	The sequence $\{\hat{\Omega}_n\}$ represents a quasi-covering of $\hat{\Omega}$,
	therefore,
	combining \cite[Theorem~2.10 and Lemma~2.4]{KilpelaeinenMaly1992},
	we find a sequence $\{v_n\}$ 
	such that $v_n \to v$ in $H_0^1(\Omega)$
	and such that each $v_n$ is a finite sum of elements in $\bigcup_{m=1}^\infty H_0^1(\hat{\Omega}_m)$.
	Furthermore,
	$\sup|v_n|\leq\sup|v|$.
	Since the sets $\hat{\Omega}_n$ are increasing,
	for each $n \in \mathbb{N}$ there is $j \in \mathbb{N}$ such that $v_n \in \bigcap_{m=j}^\infty H_0^1(\hat{\Omega}_m)$.
	We extend the sequence by adding copies of elements in $\{v_n\}$ to the original sequence.
	This yields a sequence with the desired properties.
\end{proof}

Using the same ideas,
we can characterize the sum of two Sobolev spaces on quasi-open domains.
\begin{lemma}
	\label{lem:union_of_domains}
	Let $\Omega_1, \Omega_2 \subset \Omega$ be quasi-open. Then,
	\begin{equation*}
		\overline{ H_0^1(\Omega_1) + H_0^1(\Omega_2) }^{H_0^1(\Omega)}
		=
		H_0^1(\Omega_1 \cup \Omega_2).
	\end{equation*}
	Moreover,
	for every $v \in H_0^1(\Omega_1 \cup \Omega_2)^+$,
	there exist sequences
	$\{v^{(1)}_n\} \subset H_0^1(\Omega_1)^+$
	and
	$\{v^{(2)}_n\} \subset H_0^1(\Omega_2)^+$
	with
	$0 \le v^{(1)}_n + v^{(2)}_n \le v$ q.e.\ on $\Omega$ for all $n \in \N$,
	and
	$v^{(1)}_n + v^{(2)}_n \to v$ in $H_0^1(\Omega)$.
\end{lemma}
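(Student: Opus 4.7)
The plan is to prove the two inclusions separately and then refine the second one to obtain the pointwise bound in the moreover statement.

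For the easy inclusion $\overline{H_0^1(\Omega_1) + H_0^1(\Omega_2)}^{H_0^1(\Omega)} \subset H_0^1(\Omega_1 \cup \Omega_2)$: every $w \in H_0^1(\Omega_1) + H_0^1(\Omega_2)$ is a sum of two functions that vanish q.e.\ on $\Omega\setminus \Omega_1$ respectively $\Omega\setminus \Omega_2$, and hence vanishes q.e.\ on $\Omega \setminus (\Omega_1 \cup \Omega_2)$, i.e.\ $w \in H_0^1(\Omega_1\cup\Omega_2)$. Since $H_0^1(\Omega_1\cup\Omega_2)$ is a closed subspace of $H_0^1(\Omega)$ (as observed right after \cref{lem:pointwise_quasi_everywhere_convergence_of_a_subsequence}), the same containment holds after taking the closure.

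For the reverse inclusion, let $v \in H_0^1(\Omega_1 \cup \Omega_2)$. The family $\{\Omega_1,\Omega_2\}$ is a (finite) quasi-covering of $\Omega_1\cup\Omega_2$, so we may invoke the same two results \cite[Theorem~2.10 and Lemma~2.4]{KilpelaeinenMaly1992} that were used in the proof of \cref{lem:quasi-covering}. This produces a sequence $v_n \to v$ in $H_0^1(\Omega)$ where each $v_n$ is a finite sum of elements drawn from $H_0^1(\Omega_1) \cup H_0^1(\Omega_2)$. Grouping the $\Omega_1$-pieces and the $\Omega_2$-pieces gives $v_n = w_n^{(1)} + w_n^{(2)}$ with $w_n^{(i)} \in H_0^1(\Omega_i)$, which shows $v \in \overline{H_0^1(\Omega_1) + H_0^1(\Omega_2)}^{H_0^1(\Omega)}$.

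For the moreover part, let $v \in H_0^1(\Omega_1 \cup \Omega_2)^+$. The decomposition produced by \cite[Theorem~2.10]{KilpelaeinenMaly1992} is obtained by multiplying $v$ by a quasi-continuous partition of unity $\{\phi_k\}$ with $\phi_k \ge 0$ q.e.\ and $\sum_k \phi_k \le 1$ q.e.\ on $\Omega_1\cup\Omega_2$, with each $\phi_k$ supported (in the quasi-everywhere sense) in one of $\Omega_1$ or $\Omega_2$. Therefore each factor $\phi_k v$ is nonnegative q.e., lies in the appropriate $H_0^1(\Omega_i)^+$, and their partial sums are bounded above by $v$ q.e.. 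Grouping these partial sums into an $\Omega_1$-part and an $\Omega_2$-part as above yields nonnegative $v_n^{(1)} \in H_0^1(\Omega_1)^+$ and $v_n^{(2)} \in H_0^1(\Omega_2)^+$ with $0 \le v_n^{(1)} + v_n^{(2)} \le v$ q.e.\ on $\Omega$ and $v_n^{(1)} + v_n^{(2)} \to v$ in $H_0^1(\Omega)$.

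The main obstacle is the last paragraph: the cited references provide the convergence and the uniform bound $\sup|v_n| \le \sup|v|$ used in \cref{lem:quasi-covering}, but the stronger q.e.\ pointwise bound $0 \le v_n^{(1)} + v_n^{(2)} \le v$ requires unpacking the actual partition-of-unity construction (or, alternatively, rerunning the proof with truncations $\min(\,\cdot\,,v)$). Establishing that this truncated/partitioned construction still yields convergence in $H_0^1(\Omega)$ while preserving the split into the two subspaces is where most of the work sits.
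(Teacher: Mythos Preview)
Your proposal is correct and follows essentially the same route as the paper: both arguments invoke the quasi-covering machinery of \cite[Theorem~2.10 and Lemma~2.4]{KilpelaeinenMaly1992} exactly as in \cref{lem:quasi-covering}, and for the pointwise bounds both rely on inspecting the partition-of-unity construction inside those proofs. The paper's own proof is in fact terser than yours --- it simply states that ``an inspection of the proofs'' yields the bounds $0\le v_n^{(1)}+v_n^{(2)}\le v$ --- so your explicit identification of the nonnegative partition $\{\phi_k\}$ with $\sum_k\phi_k\le 1$ and your honest flagging of where the technical verification sits are, if anything, an improvement in exposition rather than a deviation in strategy.
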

\begin{proof}
	Since $\{\Omega_1, \Omega_2\}$
	is a quasi-covering of $\Omega_1 \cup \Omega_2$,
	we can argue as in the proof of \cref{lem:quasi-covering}
	to obtain the first identity.
	For the second assertion,
	an inspection of the proofs of
	\cite[Theorem~2.10 and Lemma~2.4]{KilpelaeinenMaly1992}
	shows that the approximating functions can be chosen to be
	pointwise bounded by $0$ and $v$.
\end{proof}

We also recall that positive elements 
in the dual space $H^{-1}(\Omega)$ of $H_0^1(\Omega)$
can be identified with regular Borel measures which are finite on compact sets.
Here, a Borel measure on $\Omega$ is a measure over the Borel $\sigma$-algebra $\mathcal{B}$,
which is the smallest $\sigma$-algebra containing all open subsets of $\Omega$.
We call a Borel measure $\mu$ regular
if 
\begin{equation*}
\mu(B)=\inf\{\mu(O): B \subset O, O \text{ is open}\}=\sup\{\mu(C): C \subset B, C \text{ is compact}\}
\end{equation*}
holds for all $B \in \mathcal{B}$.
Finally, $\mu$ is said to be finite on compact sets, if $\mu(K) < +\infty$
for all compact subsets $K \subset \Omega$.
\begin{lemma}
	\label{lem:fine_support}
	Let $\xi \in H^{-1}(\Omega)^+$ be given, i.e., 
	$\dual{\xi}{v} \geq 0$ for all $v \in H_0^1(\Omega)$ with $v \geq 0$.
	\begin{enumerate}
		\item 
		The functional
		$\xi$ can be identified with a regular Borel measure on $\Omega$
		which is finite on compact sets
		and which possesses
		the following property:
		For every Borel set $B \subset \Omega$
		with $\capa(B) = 0$,
		we have $\xi(B)=0$.
		\item 
		Every function $v \in H_0^1(\Omega)$ is $\xi$-integrable
		and it holds
		\begin{equation*}
		\dual{\xi}{v}_{H^{-1}(\Omega), H_0^1(\Omega)}=\int_{\Omega} v\,\d\xi.
		\end{equation*}
		\item 
		There is a quasi-closed set $\fsupp(\xi) \subset \Omega$ 
		with the property that
		for all $v \in H_0^1(\Omega)^+$
		it holds
		$\dual{\xi}{v}_{H^{-1}(\Omega),H_0^1(\Omega)}=0$
		if and only if 
		$v=0$ q.e.\ on $\fsupp(\xi)$.
		The set $\fsupp(\xi)$ is uniquely defined up to a set of zero capacity.
	\end{enumerate}
\end{lemma}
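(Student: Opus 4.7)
The plan is to first construct a measure representation of $\xi$, and then use it to identify the fine support. For \emph{(i)}, restrict $\xi$ to $C_c^\infty(\Omega)$: positivity makes this a positive distribution, hence by Schwartz's classical theorem a positive Radon measure $\mu$, which is automatically regular and finite on compact sets since $\Omega$ is $\sigma$-compact and second countable. To show that $\mu$ does not charge a Borel set $B$ with $\capa(B)=0$, by inner regularity it suffices to treat compact $K \subset B$: the definition of capacity yields, for each $\varepsilon > 0$, some $u_\varepsilon \in H_0^1(\Omega)^+$ with $u_\varepsilon \ge 1$ a.e.\ on an open neighborhood $V \supset K$ and $\norm{u_\varepsilon}_{H_0^1(\Omega)} < \varepsilon$; any cutoff $\phi \in C_c^\infty(V)$ with $\I_K \le \phi \le u_\varepsilon$ a.e.\ then gives $\mu(K) \le \dual\xi\phi \le \dual\xi{u_\varepsilon} \le \norm{\xi}_{H^{-1}(\Omega)}\,\varepsilon$ by positivity of $\xi$.

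For \emph{(ii)}, given $v \in H_0^1(\Omega)^+$, choose $\phi_n \in C_c^\infty(\Omega)^+$ with $\phi_n \to v$ in $H_0^1(\Omega)$. By \cref{lem:pointwise_quasi_everywhere_convergence_of_a_subsequence} some subsequence converges pointwise q.e., hence $\mu$-a.e., so Fatou yields $\int v\,\d\mu \le \dual\xi v$ and in particular $v \in L^1(\mu)$. To upgrade this to the claimed equality I would pass to a further subsequence with $\sum_k \norm{\phi_{n_k}-v}_{H_0^1(\Omega)} < \infty$, so that $w := v + \sum_k \abs{\phi_{n_k}-v}$ lies in $H_0^1(\Omega)$ and majorizes $\phi_{n_k}$ pointwise q.e.; the Fatou bound applied to $w$ shows $w \in L^1(\mu)$, and dominated convergence then delivers $\int v\,\d\mu = \dual\xi v$. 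The signed case follows from the decomposition $v = v^+ - v^-$.

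For \emph{(iii)}, let
\[
\mathcal F := \{\hat\Omega \subset \Omega : \hat\Omega \text{ quasi-open and } \dual\xi v = 0 \text{ for all } v \in H_0^1(\hat\Omega)^+\}.
\]
\Cref{lem:union_of_domains} makes $\mathcal F$ stable under finite unions, and \cref{lem:quasi-covering} extends this to countable increasing unions. Maximizing $\hat\Omega \mapsto \capa(\hat\Omega) \le \capa(\Omega)$ along such a chain produces a maximal $\hat\Omega_\infty \in \mathcal F$, and I set $\fsupp(\xi) := \Omega \setminus \hat\Omega_\infty$. The ``if'' direction is immediate from $v \in H_0^1(\hat\Omega_\infty)^+$. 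For the converse, given $v \in H_0^1(\Omega)^+$ with $\dual\xi v = 0$, each $\{v > 1/n\}$ lies in $\mathcal F$: for bounded $w \in H_0^1(\{v>1/n\})^+$ one has $w = \min(w, n\norm{w}_\infty v)$ q.e., whence $0 \le \dual\xi w \le n\norm{w}_\infty \dual\xi v = 0$, and the truncation $w_M := \min(w,M) \to w$ in $H_0^1(\Omega)$ extends this to unbounded $w$. Hence $\{v > 0\} = \bigcup_n \{v > 1/n\} \in \mathcal F$, so $\{v>0\} \subset_q \hat\Omega_\infty$ and $v=0$ q.e.\ on $\fsupp(\xi)$. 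Uniqueness follows by a symmetric argument: for any candidate $F$, picking $v \in H_0^1(\Omega\setminus F)^+$ with $\{v > 0\} =_q \Omega \setminus F$ (e.g., the solution of $-\Delta v = 1$ in $H_0^1(\Omega\setminus F)$) forces any other such candidate to coincide with $F$ up to capacity. The main obstacle I anticipate is the equality step in \emph{(ii)}: Fatou is straightforward, and the summable-subsequence majorant trick is the natural device for the reverse inequality.
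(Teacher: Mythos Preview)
Parts (i) and (ii) are correct; the paper itself only cites \cite[p.~564--565]{BonnansShapiro2000} here, so your self-contained argument adds content, and the summable-subsequence majorant is exactly the right device for the equality in (ii).

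Part (iii) has a real gap: maximizing $\capa(\cdot)$ over $\mathcal F$ does \emph{not} produce a maximal element. The bound $\capa(\hat\Omega)\le\capa(\Omega)$ is vacuous because $\capa(\Omega)=+\infty$ for any bounded $\Omega$ (if $u\in H_0^1(\Omega)$ had $u\ge 1$ a.e.\ then $\min(u,1)=1$ would lie in $H_0^1(\Omega)$, contradicting Poincar\'e). Worse, capacity is not strictly monotone on quasi-open sets even when finite: for $\Omega=B(0,2)\subset\R^2$, the open annulus $A=\{1/2<|x|<1\}$ and the disk $B=B(0,1)$ satisfy $\capa(A)=\capa(B)$ (the capacitary potential of $A$ is harmonic in the inner hole, hence identically $1$ there, so it is also the capacitary potential of $B$), yet $\capa(B\setminus A)>0$. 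Thus from $\capa(\hat\Omega\cup\hat\Omega_\infty)=\capa(\hat\Omega_\infty)$ you cannot conclude $\hat\Omega\subset_q\hat\Omega_\infty$, and your step ``$\{v>0\}\in\mathcal F\Rightarrow\{v>0\}\subset_q\hat\Omega_\infty$'' remains unjustified. The fix is to maximize a functional that \emph{is} strictly monotone modulo capacity and bounded, e.g.\ $\hat\Omega\mapsto\int_\Omega L_{\hat\Omega}(1)\,\dx$: it is dominated by $\int_\Omega L_\Omega(1)\,\dx<\infty$, and if $\hat\Omega_1\subset_q\hat\Omega_2$ with $\capa(\hat\Omega_2\setminus\hat\Omega_1)>0$ then $w_{\hat\Omega_1}\le w_{\hat\Omega_2}$ by comparison while $\{w_{\hat\Omega_i}>0\}=_q\hat\Omega_i$ forces the torsion functions to differ as $H_0^1$-elements, hence on a set of positive Lebesgue measure, giving strict inequality of the integrals. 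With this change your chain argument goes through; alternatively, note that $\{v\in H_0^1(\Omega):\dual{\xi}{\abs{v}}=0\}$ is a closed lattice ideal and invoke Stollmann's theorem (as in the proof of \cref{lem:completion_h01_l2}) to obtain $H_0^1(\hat\Omega_\infty)$ directly. The paper itself does not prove (iii) but refers to \cite{HarderWachsmuth2017:2}.
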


Proofs for statement (i) and (ii)
can be found in \cite[p.\ 564, 565]{BonnansShapiro2000}.
Note that the regularity of $\mu$ is implied by the property of being finite on compact sets,
see \cite[Theorem~2.18]{Rudin1987}.
For part (iii) we refer to \cite[Lemma~3.7]{HarderWachsmuth2017:2}.
See also \cite[Lemma~3.5]{HarderWachsmuth2017:2} and
for a different description of the fine support $\fsupp(\xi)$ in (iii)
see \cite[Lemma~A.4]{Wachsmuth2014}.


\subsection{Differentiability of the solution operator of the obstacle problem}
\label{subsec:known_results_obstacle}
For the variational inequality \eqref{eq:obstacle_problem},
we consider the solution operator $S \colon H^{-1}(\Omega) \to H_0^1(\Omega)$
that maps $u \in H^{-1}(\Omega)$ to the unique solution $y=S(u)$ of \eqref{eq:obstacle_problem}.
We define the active set associated with $u \in H^{-1}(\Omega)$ by
\begin{equation*}
	A(u):=_q\{\omega \in \Omega: S(u)(\omega)=\psi(\omega)\}
\end{equation*}
and the inactive set by
\begin{equation*}
	I(u):=_q\Omega \setminus A(u).
\end{equation*}
We emphasize that these sets are defined up to sets of capacity zero
since we always work with the quasi-continuous
representatives of functions from $H_0^1(\Omega)$,
see also \cref{subsec:capacity_theory} above.
Furthermore, $A(u)$ is quasi-closed,
$I(u)$ is quasi-open
and both sets are Borel measurable.

It is well known that $S$ is directionally differentiable
and that the directional derivative
at $u \in H^{-1}(\Omega)$ in direction $h \in H^{-1}(\Omega)$,
which is denoted by $S'(u;h)$,
solves the variational inequality
\begin{equation}
\label{eq:directional_derivative}
	\text{Find } y \in \mathcal{K}(u) :
	\quad
	\dual{-\Delta y - h}{z - y} \ge 0
	\quad \forall z \in \mathcal{K}(u),
\end{equation}
see \cite[]{Mignot1976}.
Here,
$\mathcal{K}(u)$ denotes the critical cone,
which, according to \cite[Lemma~3.1]{Wachsmuth2014}, has the following structure:
\begin{equation*}
	\mathcal{K}(u):=\{ z \in H_0^1(\Omega):
	z\geq 0 \text{ q.e.\ on } A(u) 
	\text{ and } z=0 \text{ q.e.\ on } A_s(u)\}.
\end{equation*} 
Here, the strictly active set $A_s(u)$ is a quasi-closed subset of the active set $A(u)$.
It has a representation
in terms of the fine support of the multiplier $\xi=-\Delta S(u)-u \in H^{-1}(\Omega)^+$,
see \cite[Appendix~A]{Wachsmuth2014}.
In fact, it holds
\begin{equation}
\label{characterization_strictly_active_set}
	A_s(u)=_q\fsupp(\xi).
\end{equation}
Again, we emphasize that this definition is unique up to a subset of capacity zero.

The following lemma characterizes the points 
in which $S$ is G\^ateaux differentiable.

\begin{lemma}
	\label{lem:gateaux_S}
	The solution operator $S$ of the obstacle problem \eqref{eq:obstacle_problem}
	is G\^ateaux differentiable in $u \in H^{-1}(\Omega)$
	if and only if
	the strict complementarity condition is valid in $u$,
	i.e.,
	if and only if the equality $A(u)=_q A_s(u)$ holds.
\end{lemma}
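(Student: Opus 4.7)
The plan is to reduce Gâteaux differentiability to linearity of the directional derivative map $h \mapsto S'(u;h)$, and then to characterize that linearity via the structure of the critical cone $\mathcal{K}(u)$. Since $S$ is directionally differentiable everywhere (by Mignot's theorem) and globally Lipschitz, $S'(u;\cdot)$ is automatically Lipschitz and positively homogeneous, so Gâteaux differentiability of $S$ at $u$ is equivalent to $S'(u;-h) = -S'(u;h)$ for every $h \in H^{-1}(\Omega)$.

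I would then show that $S'(u;\cdot)$ is linear if and only if $\mathcal{K}(u)$ is a linear subspace of $H_0^1(\Omega)$. One direction is standard: if $\mathcal{K}(u)$ is a closed subspace, then \eqref{eq:directional_derivative} turns into the variational equation $\dual{-\Delta y - h}{w} = 0$ for all $w \in \mathcal{K}(u)$, which is uniquely solvable by Lax--Milgram and linear in $h$. For the converse, suppose $\mathcal{K}(u)$ is not a subspace and pick $y \in \mathcal{K}(u)$ with $-y \notin \mathcal{K}(u)$. Setting $h := -\Delta y$, the uniqueness of solutions of \eqref{eq:directional_derivative} yields $S'(u;h) = y$; if $S'(u;\cdot)$ were linear we would obtain $S'(u;-h) = -y$, which contradicts the fact that $S'(u;-h) \in \mathcal{K}(u)$.

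It then remains to show $\mathcal{K}(u) = -\mathcal{K}(u)$ if and only if $A(u) =_q A_s(u)$. The ``$\Leftarrow$'' implication is immediate: under strict complementarity, the nonnegativity condition and the vanishing condition defining $\mathcal{K}(u)$ collapse into the single condition $z = 0$ q.e.\ on $A(u)$, yielding $\mathcal{K}(u) = H_0^1(I(u))$, which is a closed subspace. For ``$\Rightarrow$'' I argue contrapositively: assuming $\capa(A(u) \setminus A_s(u)) > 0$, I want to produce $z \in H_0^1(\Omega)^+$ with $z = 0$ q.e.\ on $A_s(u)$ and $z > 0$ on a positive-capacity subset of $A(u) \setminus A_s(u)$. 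Such a $z$ would then lie in $\mathcal{K}(u)$ while $-z$ would not, contradicting that $\mathcal{K}(u)$ is a subspace.

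The main obstacle is precisely this last capacitary construction: one must exploit the fact that $\Omega \setminus A_s(u)$ is quasi-open (so $H_0^1(\Omega \setminus A_s(u))$ is well defined and closed in $H_0^1(\Omega)$) together with the positivity of $\capa(A(u) \setminus A_s(u))$ to produce the desired $z$. A clean way to do this is to take the capacitary potential of a quasi-compact subset of $A(u) \setminus A_s(u)$ of positive capacity, truncate it into $[0,1]$, and project it orthogonally onto the closed cone $H_0^1(\Omega \setminus A_s(u))^+$; one then verifies via a quasi-continuous representative argument (in the spirit of \cref{lem:pointwise_quasi_everywhere_convergence_of_a_subsequence}) that the projection still takes strictly positive values on a subset of $A(u) \setminus A_s(u)$ of positive capacity. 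Once this potential-theoretic step is in place, the rest is routine VI analysis together with the characterization \eqref{characterization_strictly_active_set}.
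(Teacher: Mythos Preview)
Your overall strategy matches the paper's: reduce G\^ateaux differentiability to linearity of $S'(u;\cdot)$, identify this with $\mathcal{K}(u)$ being a linear subspace, and then show that the subspace property is equivalent to $A(u)=_q A_s(u)$. The first two steps are essentially identical to the paper's argument (the paper phrases the ``linearity $\Rightarrow$ subspace'' direction by observing that the range of $S'(u;\cdot)$ equals $\mathcal{K}(u)$, which is your $h=-\Delta y$ trick).

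The difference---and the weak point of your proposal---is the final step. Your plan is to construct, under the assumption $\capa(A(u)\setminus A_s(u))>0$, a nonnegative $z\in H_0^1(\Omega\setminus A_s(u))$ that is strictly positive on a positive-capacity subset of $A(u)\setminus A_s(u)$, by taking a capacitary potential and projecting onto $H_0^1(\Omega\setminus A_s(u))^+$. The assertion that ``the projection still takes strictly positive values on a subset of $A(u)\setminus A_s(u)$ of positive capacity'' is precisely the nontrivial part, and the proposed ``quasi-continuous representative argument'' does not supply it. Making this rigorous essentially requires a strong maximum principle on the quasi-open set $\Omega\setminus A_s(u)$, which in turn is most cleanly obtained via the torsion function---so you end up needing the same tool the paper uses, only wrapped in an unnecessary construction.

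The paper avoids this detour entirely. Once $\mathcal{K}(u)$ is a subspace, it observes that every $v\in H_0^1(\Omega\setminus A_s(u))^+$ lies in $\mathcal{K}(u)$, hence $-v\in\mathcal{K}(u)$ as well, forcing $v=0$ q.e.\ on $A(u)$; thus $H_0^1(\Omega\setminus A_s(u))=H_0^1(\Omega\setminus A(u))$. The torsion function result (\cref{thm:torsion_function_properties}, namely $\{L_O(1)>0\}=_q O$) then yields $A(u)=_q A_s(u)$ directly. If you prefer your contrapositive framing, the same tool gives the desired $z$ in one line: take $z=L_{\Omega\setminus A_s(u)}(1)$, which is nonnegative, vanishes q.e.\ on $A_s(u)$, and is strictly positive q.e.\ on $\Omega\setminus A_s(u)\supset_q A(u)\setminus A_s(u)$.
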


\begin{proof}
	Assume that $A(u)=_q A_s(u)$ holds.
	Then $\mathcal{K}(u)=\{z \in H_0^1(\Omega): z=0 \text{ q.e.\ on } A(u)\}$ is a linear subspace
	and the variational inequality \eqref{eq:directional_derivative}
	for the directional derivative $S'(u;h)$ reduces to
	\begin{equation*}	
		\text{Find } y \in \mathcal{K}(u) :
		\quad
		\dual{-\Delta y - h}{z} = 0
		\quad \forall z \in \mathcal{K}(u),
	\end{equation*}
	i.e., $S'(u;\cdot)$ is linear and bounded.

	For the reverse implication,
	assume that $S$ is G\^ateaux differentiable in $u \in H^{-1}(\Omega)$.
	By the variational inequality \eqref{eq:directional_derivative}
	we obtain
	that the image of $S'(u;\cdot)$ is contained in $\mathcal{K}(u)$.
	Conversely, let $v \in \mathcal{K}(u)$ be arbitrary.
	Then we can check $v=S'(u;-\Delta v)$,
	which implies that $\mathcal{K}(u)$ 
	coincides with the image of $S'(u;\cdot)$.
	Thus, $\mathcal{K}(u)$ is a linear subspace of $H_0^1(\Omega)$.
	Using $A_s(u) \subset_q A(u)$, we trivially have
	\begin{equation*}
		H_0^1\bigh(){ \Omega \setminus A(u) }
		\subset
		\KK(u)
		\subset
		H_0^1\bigh(){ \Omega \setminus A_s(u) }
		.
	\end{equation*}
	Now, for $v \in H_0^1\bigh(){ \Omega \setminus A_s(u)}^+$,
	we have $v \ge 0$ q.e.\ in $\Omega$.
	Hence, $v \in \KK(u)$ and, since $\KK(u)$ is a subspace,
	we also have $-v \in \KK(u)$.
	This leads to $v \ge 0$ and $v \le 0$ q.e.\ on $A(u)$.
	Therefore, $v \in H_0^1\bigh(){ \Omega \setminus A(u) }$.
	This shows
	\begin{equation*}
		H_0^1\bigh(){ \Omega \setminus A(u) }
		=
		\KK(u)
		=
		H_0^1\bigh(){ \Omega \setminus A_s(u) }
		.
	\end{equation*}
	Finally,
	\cref{thm:torsion_function_properties}
	below
	implies that the equality $A(u)=_q A_s(u)$
	holds.
\end{proof}

To summarize,
the G\^ateaux derivative 
of the solution operator of the obstacle problem
in differentiability points $u \in H^{-1}(\Omega)$
is given by the operator $L_{I(u)} \in \LL(H^{-1}(\Omega),H_0^1(\Omega))$,
where for $h \in H^{-1}(\Omega)$,
the element $L_{I(u)}(h)$ 
is the solution to the boundary value problem
\begin{equation}
\label{G\^ateaux_derivatives}
	y \in H_0^1(I(u)): \quad -\Delta y=h.
\end{equation}
This equality has to be understood in the sense of $H_0^1(I(u))\dualspace$,
i.e.,
$\dual{-\Delta y - h}{v} = 0$ for all $v \in H_0^1(I(u))$.

\subsection{Generalized differentials}
\label{subsec:generalized_differentials}
The generalized differentials, which we will consider,
consist of operators in $\LL(X,Y)$.
In their definition,
we will differentiate between different topologies
on $X$ and $\LL(X,Y)$.
We consider the following standard operator topologies on $\LL(X,Y)$.

\begin{definition}
	Let $X$ and $Y$ be Banach spaces and $\{L_n\},L \subset \LL(X,Y)$.
	\begin{enumerate}
		\item 
		We say that the sequence $\{L_n\}$ converges to $L$ in the strong operator topology (SOT)
		if and only if 
		$\{L_n h\}$ converges to $Lh$ in $Y$ for all $h \in X$.
		If $\{L_n\}$ converges to $L$ in the strong operator topology,
		we write $L_n \toSOT L$.
		\item 
		We say that the sequence $\{L_n\}$ converges to $L$ in the weak operator topology (WOT)
		if and only if 
		$\{L_n h\}$ converges to $Lh$ weakly in $Y$ for all $h \in X$.
		If $\{L_n\}$ converges to $L$ in the weak operator topology,
		we write $L_n \toWOT L$.
	\end{enumerate}
\end{definition}

From the uniform boundedness principle,
we obtain that a sequence of operators
which converges in WOT
has to be bounded.
\begin{lemma}
	\label{lem:norm_bounded_WOT}
	Let $\{L_n\} \subset \LL(X,Y)$ 
	and assume that $L_n \toWOT L$ 
	for some $L \in \LL(X,Y)$.
	Then there is a constant $C>0$
	such that $\|L_n\|_{\LL(X,Y)}\leq C$ for all $n \in \mathbb{N}$.
\end{lemma}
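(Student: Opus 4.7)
The plan is to invoke the uniform boundedness principle (Banach--Steinhaus) twice, first on the dual side to turn weak boundedness into norm boundedness in $Y$, and then on $X$ to obtain norm boundedness of the family of operators.

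First I would fix an arbitrary $h \in X$. By assumption $L_n h \weakly L h$ in $Y$, so for every continuous linear functional $y\dualspace \in Y\dualspace$ the scalar sequence $\{\langle y\dualspace, L_n h\rangle\}$ converges and is therefore bounded. Identifying each $L_n h$ with its image under the canonical embedding $Y \hookrightarrow Y^{\star\star}$, the family $\{L_n h\}_{n\in\N}$ is pointwise bounded on the Banach space $Y\dualspace$. The uniform boundedness principle applied to this family then yields
\begin{equation*}
  \sup_{n \in \N} \norm{L_n h}_Y < \infty.
\end{equation*}

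Since $h \in X$ was arbitrary, the family $\{L_n\}_{n\in\N} \subset \LL(X,Y)$ is pointwise bounded on $X$. A second application of the uniform boundedness principle, now on the Banach space $X$, provides a constant $C > 0$ with
\begin{equation*}
  \sup_{n \in \N} \norm{L_n}_{\LL(X,Y)} \le C,
\end{equation*}
which is the asserted bound.

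There is no real obstacle here; the only thing to be careful about is that the argument genuinely requires two separate applications of Banach--Steinhaus, because weak convergence in $Y$ only gives pointwise (scalar) boundedness against functionals, and one needs the first application to pass from that to norm boundedness in $Y$ before the second application can be carried out on $X$.
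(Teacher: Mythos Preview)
Your proof is correct and is exactly the standard two-step Banach--Steinhaus argument the paper has in mind; the paper itself does not spell out a proof but merely remarks, immediately before stating the lemma, that the result is obtained ``from the uniform boundedness principle''. Your write-up simply unpacks that remark, so there is nothing to add or compare.
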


The next lemma shows under which conditions
a product $L_n \, h_n$ converges.
\begin{lemma}
	\label{lem:SOT_und_WOT}
	Let $\{L_n\} \subset \LL(X,Y)$ and $\{h_n\} \subset X$ be sequences.
	\begin{enumerate}
		\item
			If $L_n \toSOT L$ and $h_n \to h$ then $L_n h_n \to L h$.
		\item
			If $L_n \toWOT L$ and $h_n \to h$ then $L_n h_n \weakly L h$.
		\item
			If $L_n \toWOT L$, $L_n\adjoint \toSOT L\adjoint$ and $h_n \weakly h$ then $L_n h_n \weakly L h$.
	\end{enumerate}
\end{lemma}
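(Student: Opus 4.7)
The plan is to exploit the fact that in all three cases the operator sequence $\{L_n\}$ is norm-bounded: in parts (ii) and (iii) this is exactly \cref{lem:norm_bounded_WOT}, and in part (i) it follows from the Banach--Steinhaus theorem applied pointwise (since every SOT-convergent sequence is pointwise bounded). Once this is available, the common strategy for (i) and (ii) is the decomposition
\begin{equation*}
  L_n h_n - L h = L_n(h_n - h) + (L_n - L) h,
\end{equation*}
which separates the dependence on the mode of convergence of $h_n$ from that on the mode of convergence of $L_n$.

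For part (i), the first summand is estimated by $\|L_n\|\,\|h_n - h\|_X \to 0$, while the second tends to zero in $Y$ directly from the definition of $L_n \toSOT L$ applied at the fixed argument $h$; hence $L_n h_n \to L h$ in $Y$. For part (ii), the same bound gives $L_n(h_n - h) \to 0$ strongly in $Y$, and therefore also weakly, whereas the WOT assumption yields $(L_n - L) h \weakly 0$ in $Y$. Adding a strongly null sequence to a weakly null sequence still produces weak convergence to $L h$, as claimed.

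Part (iii) is the only one in which both convergences are genuinely weak, so the decomposition above no longer suffices. Instead, I would test against an arbitrary $y\adjoint \in Y\adjoint$ and rewrite
\begin{equation*}
  \dual{y\adjoint}{L_n h_n} = \dual{L_n\adjoint y\adjoint}{h_n}.
\end{equation*}
By hypothesis $L_n\adjoint y\adjoint \to L\adjoint y\adjoint$ strongly in $X\adjoint$, and $\{h_n\}$ is bounded in $X$ because it converges weakly. Splitting the right-hand side as
\begin{equation*}
  \dual{L_n\adjoint y\adjoint - L\adjoint y\adjoint}{h_n} + \dual{L\adjoint y\adjoint}{h_n - h},
\end{equation*}
the first term tends to zero by strong-times-bounded, and the second by the definition of $h_n \weakly h$ together with $L\adjoint y\adjoint \in X\adjoint$ being a fixed test functional. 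Thus $\dual{y\adjoint}{L_n h_n} \to \dual{y\adjoint}{L h}$ for every $y\adjoint \in Y\adjoint$, which is precisely $L_n h_n \weakly L h$ in $Y$.

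I do not foresee any substantive obstacle. The only care required is to invoke the uniform norm bound on $\{L_n\}$ (or, in part (iii), on $\{L_n\adjoint\}$) before combining a weak- or norm-convergence of the vectors with the operator convergence, since this is exactly what forces the cross terms to vanish in each of the three splittings.
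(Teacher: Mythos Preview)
Your proof is correct and follows essentially the same approach as the paper: invoke uniform boundedness of $\{L_n\}$, then use the splitting $L_nh_n - Lh = L_n(h_n-h) + (L_n - L)h$ for (i) and (ii). For (iii) the paper actually keeps this same decomposition and handles the term $L_n(h_n-h)$ by passing to adjoints via $\dual{f}{L_n(h_n-h)} = \dual{L_n\adjoint f}{h_n-h}$, so your remark that the decomposition ``no longer suffices'' is slightly off---but your alternative splitting $\dual{L_n\adjoint y\adjoint - L\adjoint y\adjoint}{h_n} + \dual{L\adjoint y\adjoint}{h_n - h}$ is an equally valid rearrangement of the same idea.
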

\begin{proof}
	In any case, the norm of the operators $L_n$ is uniformly bounded,
	see \cref{lem:norm_bounded_WOT}.
	Now, we use the identity
	\begin{equation*}
		L_n h_n - L h
		=
		(L_n h - L h) + L_n (h_n - h)
		.
	\end{equation*}
	In cases (i) and (ii), the claim follows immediately.

	In case (iii), $L_n h - L h \weakly 0$ is clear.
	To prove the weak convergence of the second addend, we take $f \in Y\dualspace$
	and have
	\begin{equation*}
		\dual{ f }{ L_n \, (h_n - h)}_{Y\dualspace,Y}
		=
		\dual{ L_n\adjoint f}{ h_n - h}_{X\dualspace,X}
		\to
		0
	\end{equation*}
	since $L_n\adjoint \toSOT L\adjoint$ by assumption.
\end{proof}

Now we define the generalized derivatives 
that we will deal with in this paper.

\begin{definition}
	\label{def:generalized_derivatives}
	Let $T \colon X \to Y$ be a locally Lipschitz mapping 
	from a separable Banach space $X$ 
	to a separable and reflexive Banach space $Y$.
	We denote the set of points in $X$ 
	in which $T$ is G\^ateaux differentiable by $D_T$.
	For $x \in X$ 
	we define the following generalized derivatives
	\begin{align*}
		\pBss T(x):=
		\{L \in \LL(X,Y):
		\exists \{x_n\} \subset D_T:
		x_n \to x \text{ in } X ,\;
		&T'(x_n) \toSOT L \text{ in } \LL(X,Y)\},
		\\
		\pBsw T(x):=
		\{L \in \LL(X,Y):
		\exists \{x_n\} \subset D_T:
		x_n \to x \text{ in } X ,\;
		&T'(x_n) \toWOT L \text{ in } \LL(X,Y)\},
		\\
		\pBws T(x):=
		\{L \in \LL(X,Y):
		\exists \{x_n\} \subset D_T:
		x_n \weakly x \text{ in } X ,\; 
		&T(x_n) \weakly T(x) \text{ in } Y,\\
		&T'(x_n) \toSOT L \text{ in } \LL(X,Y)\},
		\\
		\pBww T(x):=
		\{L \in \LL(X,Y):
		\exists \{x_n\} \subset D_T :
		x_n \weakly x \text{ in } X ,\; 
		&T(x_n) \weakly T(x) \text{ in } Y,\\
		&T'(x_n) \toWOT L \text{ in } \LL(X,Y)\}.
	\end{align*}
	Note that the first superscript refers to the mode of convergence
	of the points $x_n$ in $X$,
	whereas the second superscript refers to the type of operator topology
	for the convergence of $T'(x_n)$.
\end{definition}
In the literature, these generalized differentials
are sometimes called ``subderivatives''.
However, this notion is only senseful
for functions mapping into $\R$ (or, more generally, into an ordered set).

Note that, 
in contrast to \cite[Definition~3.1]{ChristofClasonMeyerWalther2017},
we also require that the values $\{T(x_n)\}$ converge weakly to $T(x)$
when considering the generalized differentials $\pBws T(x)$ and $\pBww T(x)$.
Since the solution operator $\tilde{S}$
to the non-smooth semilinear equation treated in \cite{ChristofClasonMeyerWalther2017}
is weakly (sequentially) continuous on the considered spaces, 
see \cite[Corollary~3.7]{ChristofClasonMeyerWalther2017},
it always fulfills 
$\tilde{S}(u_n) \weakly \tilde{S}(u)$
whenever $u_n \weakly u$,
anyway.
However, the solution operator $S$ of the obstacle problem
is not weakly (sequentially) continuous from $H^{-1}(\Omega)$ to $H_0^1(\Omega)$.

We collect some simple properties of the generalized derivatives.

\begin{proposition}
	\label{prop:properties_derivatives}
	Let $T \colon X \to Y$ be a globally Lipschitz continuous map 
	from a separable Banach space $X$ 
	to a separable, reflexive Banach space $Y$.
	\begin{enumerate}
		\item 
		For all $x \in X$ it holds
		\begin{equation*}
			\pBss T(x) 
			\subset \pBsw T(x) 
			\subset \pBww T(x) 
			\qquad\text{and}\qquad
			\pBss T(x) 
			\subset \pBws T(x) 
			\subset \pBww T(x).
		\end{equation*}
		\item 
		Let $x \in X$. 
		Suppose there is a sequence $\{x_n\} \subset X$ 
		with $x_n \to x$ in $X$ 
		and a sequence $\{L_n\} \subset \LL(X,Y)$ 
		with $L_n \in \pBss T(x_n)$ for all $n \in \mathbb{N}$. 
		Furthermore, 
		assume that $L_n \toSOT L$ for some $L \in \LL(X,Y)$. 
		Then $L$ is in $\pBss T(x)$.
		\item 
		Let $x \in X$. 
		Suppose there is a sequence $\{x_n\} \subset X$ 
		with $x_n \to x$ in $X$ 
		and a sequence $\{L_n\} \subset \LL(X,Y)$ 
		with $L_n \in \pBsw T(x_n)$ for all $n \in \mathbb{N}$. 
		Furthermore, 
		assume that $L_n \toWOT L$ 
		for some $L \in \LL(X,Y)$. 
		Then $L$ is in $\pBsw T(x)$.
	\end{enumerate}
\end{proposition}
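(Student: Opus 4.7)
The inclusions follow directly from the definitions together with the two elementary facts (a) strong convergence in $X$ implies weak convergence in $X$, and (b) SOT convergence of operators implies WOT convergence (since strong convergence of $T'(x_n)h$ in $Y$ implies weak convergence). The only nontrivial bookkeeping is that the enlarged differentials $\pBws$ and $\pBww$ additionally require $T(x_n) \weakly T(x)$; but if $x_n \to x$ strongly, then continuity of $T$ gives $T(x_n) \to T(x)$ in norm, which certainly implies weak convergence. So the full chain of inclusions is immediate.

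\textbf{Part (ii).} I would use a standard diagonal extraction. By assumption, for each $n$ there is a sequence $\{x_n^k\}_k \subset D_T$ with $x_n^k \to x_n$ in $X$ and $T'(x_n^k) \toSOT L_n$ as $k \to \infty$. Fix a countable dense set $\{h_j\} \subset X$, which exists because $X$ is separable. For each $n$, pick $k(n)$ large enough so that simultaneously
\begin{equation*}
    \norm{x_n^{k(n)} - x_n}_X < \tfrac1n
    \quad\text{and}\quad
    \norm{T'(x_n^{k(n)})h_j - L_n h_j}_Y < \tfrac1n \quad\text{for } j=1,\dots,n.
\end{equation*}
Set $z_n := x_n^{k(n)} \in D_T$. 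Then $z_n \to x$ in $X$, and for any fixed $h_j$ the triangle inequality together with $L_n \toSOT L$ yields $T'(z_n)h_j \to L h_j$ in $Y$. To upgrade this to SOT convergence at every $h \in X$, I invoke the uniform bound $\sup_u \norm{T'(u)}_{\LL(X,Y)} \le \mathrm{Lip}(T)$, which holds because $T$ is globally Lipschitz and differentiable at each $u \in D_T$, together with \cref{lem:norm_bounded_WOT} for the limit; a density argument in $X$ then gives $T'(z_n) \toSOT L$.

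\textbf{Part (iii).} The structure is identical but the test is now weak convergence in $Y$. Since $Y$ is separable and reflexive, $Y\dualspace$ is separable as well, so pick a countable dense set $\{f_i\} \subset Y\dualspace$ in addition to $\{h_j\} \subset X$. For each $n$, choose $k(n)$ so that
\begin{equation*}
    \norm{x_n^{k(n)} - x_n}_X < \tfrac1n
    \quad\text{and}\quad
    \bigabs{\dual{f_i}{T'(x_n^{k(n)})h_j - L_n h_j}} < \tfrac1n
\end{equation*}
for all $i,j \in \{1,\dots,n\}$, which is possible since $T'(x_n^k) \toWOT L_n$ amounts to convergence of finitely many scalar quantities. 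Setting $z_n := x_n^{k(n)}$ and using $L_n \toWOT L$ together with the countable density argument on both $X$ and $Y\dualspace$ (again powered by the uniform Lipschitz bound on the derivatives), one obtains $T'(z_n) \toWOT L$, i.e., $L \in \pBsw T(x)$.

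\textbf{Main obstacle.} The routine parts are the set-theoretic inclusions in (i) and the diagonal indexing. The point that requires care is the passage from convergence on a countable dense set to convergence on all of $X$ (and, in (iii), also on all of $Y\dualspace$). This is where the uniform operator bound coming from the global Lipschitz constant of $T$, combined with separability of $X$ and reflexivity of $Y$, does the decisive work; without the uniform bound the diagonalization would only produce convergence on a dense set and the argument would break down.
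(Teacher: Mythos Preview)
Your proposal is correct and follows essentially the same approach as the paper: both use a diagonal extraction against countable dense subsets of $X$ (and, for (iii), of $Y\dualspace$), combined with the uniform operator bound coming from the global Lipschitz constant to pass from convergence on a dense set to convergence everywhere. The only cosmetic difference is that the paper packages the density step by introducing moving best approximants $\bar h_n \in \{h_1,\dots,h_n\}$ and $\bar y_n^\star \in \{y_1^\star,\dots,y_n^\star\}$ and writing a single four-term estimate, whereas you do a standard $\varepsilon/3$-style density argument in two stages; the substance is identical.
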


\begin{proof}
	The assertion in (i) follows easily by the relation between the respective topologies. 
	(ii) can be found in \cite[Proposition~3.4]{ChristofClasonMeyerWalther2017}, 
	one just hast to replace $L^2(\Omega)$ 
	by an arbitrary separable Banach space $X$.
	We prove part (iii) similarly to \cite[Proposition~3.4]{ChristofClasonMeyerWalther2017}
	with the obvious modifications:
	Since $L_n \in \pBsw T(x_n)$,
	there are sequences
	$\big\{x_m^{(n)}\big\} \subset D_T$ 
	with $x_m^{(n)} \to x_n$ as $m \to \infty$
	and $T'\big(x_m^{(n)}\big) \toWOT L_n$ as $m \to \infty$.
	Since $X$ is separable
	and since the properties of $Y$ imply that $Y\dualspace$ is separable as well,
	we can find sequences $\{h_n\}$ and $\{y_n\dualspace\}$
	that are dense in $X$, respectively $Y\dualspace$.
	For all $n \in \mathbb{N}$
	fix $m(n) \in \mathbb{N}$ with
	\begin{align*}
		\big|\bigdual{y_l\dualspace}{T'\big(x_{m(n)}^{(n)}\big)h_k-L_n h_k}\big|&<1/n 
		\qquad \forall k,l=1,\ldots,n,
		\\
		\big\|x_{m(n)}^{(n)}-x_n\big\|&\leq 1/n.
	\end{align*}
	For fixed $h \in X$, $y\dualspace \in Y\dualspace$, and for all $n \in \mathbb{N}$ we define
	\begin{align*}
		\bar{h}_n & :=\arg\min\{\|h_k-h\|_X: 1\leq k \leq n\}, \\
		\bar{y}_n\dualspace & :=\arg\min\{\|y_k\dualspace-y\dualspace\|_{Y\dualspace}: 1\leq k \leq n\}.
	\end{align*}
	These definitions imply that
	$\bar{h}_n \to h$ in $X$ 
	and $\bar{y}_n\dualspace \to y\dualspace$ in $Y\dualspace$.
	We mention that all elements in $\pBww T(x)$
	are bounded by the Lipschitz constant of $T$,
	see
	\cite[Lemma~3.2(iii)]{ChristofClasonMeyerWalther2017}.
	In particular,
	$\big\|T'\big(x_{m(n)}^{(n)}\big)-L_n\big\|_{\mathcal{L}(X,Y)}$ is bounded.
	This shows
	\begin{align*}
		&\big|\bigdual{y\dualspace}{T'\big(x_{m(n)}^{(n)}\big)h-L h}\big|
		\\
		&\qquad\leq\big|\bigdual{\bar{y}_n\dualspace}{T'\big(x_{m(n)}^{(n)}\big)\bar{h}_n-L_n\bar{h}_n}\big|
		+\big|\bigdual{\bar{y}_n\dualspace-y\dualspace}{T\big(x_{m(n)}^{(n)}\big)\bar{h}_n-L_n\bar{h}_n}\big|\\
		&\qquad\qquad+\big|\bigdual{y\dualspace}{(T'\big(x_{m(n)}^{(n)}\big)-L_n)(\bar{h}_n-h)}\big|
		+|\dual{y\dualspace}{L_n h-L h}|\\
		 &\qquad\leq 1/n+\|\bar{y}_n\dualspace-y\dualspace\|_{Y\dualspace}\big\|T\big(x_{m(n)}^{(n)}\big)-L_n\big\|_{\mathcal{L}(X,Y)}\|\bar{h}_n\|_X\\
		&\qquad\qquad+\|y\dualspace\|_{Y\dualspace}\big\|T'\big(x_{m(n)}^{(n)}\big)-L_n\big\|_{\mathcal{L}(X,Y)}\|\bar{h}_n-h\|_X+|\dual{y\dualspace}{L_n h-L h}|
		\to 0
	\end{align*}
	as $n \to \infty$.
	Together with $x_{m(n)}^{(n)} \to x$,
	we obtain the desired $L \in \pBsw T(x)$.
\end{proof}

\section{Introduction to capacitary measures}
\label{Sec:capacitary_measures}
The goal of this paper is the characterization
of generalized derivatives of the solution operator $S$.
In \cref{lem:gateaux_S}, we have seen that
$S'(u; \cdot)$
is of the form
$L_{I(u)}$
for all differentiability points $u \in D_S$,
see also \eqref{G\^ateaux_derivatives}.
In the definitions of the generalized derivatives
limits (in WOT or SOT) of such solution operators $L_{I(u)}$ appear,
see \cref{def:generalized_derivatives}.
Hence, we need to know which operators in $\LL( H^{-1}(\Omega), H_0^1(\Omega) )$
can appear as limits (in WOT or SOT)
of sequences of solution operators $L_{I(u)}$.

We will see that this question can be adequately answered
by the concept of so-called capacitary measures.
For the convenience of the reader,
we will give a self-contained introduction.
We suggest
\cite[Section~4.3]{BucurButtazzo2005}, \cite[Section~2]{DalMasoGarroni1994} or \cite[Section~2.2]{DalMasoMurat2004}
for further material.

We also remark that
\cref{lem:h01_weak_convergence_and_norms}, \cref{thm:gamma_limit_of_sum_of_measures} and the second half of \cref{thm:torsion_function_properties} are new results,
while the remaining results can be found in the mentioned references
or are easy corollaries of existing results in the literature.

\begin{definition}
	\label{def:capacitary_measure}
	Let $\MM_0(\Omega)$ be the set of all Borel measures $\mu$ on $\Omega$
	such that $\mu(B)=0$ for every Borel set $B \subset \Omega$ with $\capa(B)=0$
	and such that $\mu$ is regular 
	in the sense that
	$\mu(B)=\inf\{\mu(O): O \text{ quasi-open, } B\subset_q O\}$.
\end{definition}

The set $\MM_0(\Omega)$ is called 
the set of \emph{capacitary measures} on $\Omega$.
The name stems from the fact that,
on the one hand,
$\mu(B)=0$ for all Borel sets $B \subset \Omega$ with $\capa(B)=0$,
and on the other hand,
$\mu(B)=0$ for all $\mu \in \MM_0(\Omega)$ implies that $\capa(B)=0$,
see \cite[Lemma~6.55]{BonnansShapiro2000}.

Recall that we work with Borel measurable representatives,
that is, $v \in H_0^1(\Omega)$
is always assumed to be quasi-continuous and Borel measurable.
Since $\mu \in \MM_0(\Omega)$ is a Borel measure,
$v$ is $\mu$-measurable. Further,
for $p \in [1,\infty)$, we can define the integral
\begin{equation*}
	\int_\Omega \abs{v}^p \, \d\mu \in [0,\infty]
\end{equation*}
in the usual way.
In the case that the integral is finite, we write $v \in L^p_\mu(\Omega)$.
Note that this integral does not depend on the actual representative
of $v$, since the quasi-continuous representatives differ only on
sets of capacity zero whereas $\mu$ vanishes on sets of capacity zero.

For $\mu \in \MM_0(\Omega)$
we consider the solution operator $L_\mu \colon H^{-1}(\Omega) \to H_0^1(\Omega)$
of the relaxed Dirichlet problem
\begin{equation*}
	y \in H_0^1(\Omega): \quad -\Delta y+\mu y=f,
\end{equation*}
that is,
$L_\mu$ maps $f \in H^{-1}(\Omega)$ to the solution $y$ of
\begin{align}
\label{weak_formulation_relaxed_BVP}
	\begin{split}
	& y \in H_0^1(\Omega) \cap L_\mu^2(\Omega):\\
	&\int_\Omega \nabla y \, \nabla z \, \d x+\int_\Omega y\, z\, \d \mu
	=\dual {f}{z}_{H^{-1}(\Omega),H_0^1(\Omega)}
	\quad \forall z \in H_0^1(\Omega) \cap L_\mu^2(\Omega).
	\end{split}
\end{align}
The solution to \eqref{weak_formulation_relaxed_BVP} exists and is unique,
it can be identified with 
the Fr\'echet-Riesz representative of 
$f \in H^{-1}(\Omega) \subset (H_0^1(\Omega)\cap L_\mu^2(\Omega))'$
with respect to the scalar product 
$(y,z)=\int_\Omega \nabla y \, \nabla z \, \d x+\int_\Omega y\, z\, \d \mu$ 
on $H_0^1(\Omega)\cap L_\mu^2(\Omega)$.
Indeed, $H_0^1(\Omega)\cap L_\mu^2(\Omega)$ is a Hilbert space, 
see \cite[Proposition~2.1]{ButtazzoDalMaso1991}.

Let us motivate the notion of ``relaxed Dirichlet problem''.
Let $O \subset \Omega$ be a quasi-open set. We define the measure
$\infty_{\Omega \setminus O}$
via
\begin{equation*}
	\infty_{\Omega \setminus O}(B)=
	\begin{cases}
		0, &\text{ if } \capa(B \setminus O)=0,\\
		+\infty, &\text{ otherwise,}
	\end{cases}
\end{equation*}
for all Borel sets $B \subset \Omega$.
By definition, $\infty_{\Omega \setminus O}$ is a Borel measure
and it is clear that $\infty_{\Omega \setminus O}$ vanishes on sets
with zero capacity.
The regularity of $\infty_{\Omega \setminus O}$ in the sense of \cref{def:capacitary_measure}
is easy to check, see \cite[Remark~3.3]{DalMaso1987}.
Hence, $\infty_{\Omega \setminus O} \in \MM_0(\Omega)$.
From the definitions, it is easy to check that
$v \in L^2_{\infty_{\Omega \setminus O}}(\Omega)$
if and only if
$v = 0$ q.e.\ on $\Omega \setminus O$
for all $v \in H_0^1(\Omega)$.
That is, $H_0^1(\Omega) \cap L^2_{\infty_{\Omega \setminus O}}(\Omega) = H_0^1(O)$.
Now, it is clear that the problem \eqref{weak_formulation_relaxed_BVP}
with $\mu = \infty_{\Omega \setminus O}$
is just a reformulation of the Dirichlet problem
$-\Delta y = f$ in $H_0^1(O)\dualspace$.
Therefore, the problems of class \eqref{weak_formulation_relaxed_BVP}
with $\mu \in \MM_0(\Omega)$
comprise the classical Dirichlet problem
on open sets, but also more general problems.

Similarly,
the problem
\begin{equation*}
	y \in H_0^1(I(u)):\quad-\Delta y +\infty_{\Omega \setminus I(u)} y=f
\end{equation*}
is an equivalent reformulation of \eqref{G\^ateaux_derivatives}.
Therefore,
the operators $L_{I(u)}$ (introduced in \eqref{G\^ateaux_derivatives}) and $L_{\infty_{\Omega \setminus I(u)}}$ (from \eqref{weak_formulation_relaxed_BVP}) coincide.
Thus, all possible Gâteaux derivatives of $S$
form a subset of $\{ L_\mu : \mu \in \MM_0(\Omega) \}$.

Next, we will describe
how the set $\MM_0(\Omega)$ can be equipped 
with a metric structure,
rendering it a metric space with nice properties.
We note that some references do not include the regularity condition
from \cref{def:capacitary_measure}
in the definition of $\MM_0(\Omega)$.
In the case that this regularity condition is dropped,
one has to consider equivalence classes of capacitary measures
in order to obtain a metric space.
For a thorough discussion of this topic, we refer to \cite[Section~3]{DalMaso1987}.

\begin{definition}
	Let $\{\mu_n\} \subset \MM_0(\Omega)$. 
	We say that the sequence $\{\mu_n\}$ $\gamma$-converges to $\mu \in \MM_0(\Omega)$ 
	if and only if 
	\begin{equation*}
		L_{\mu_n} \toWOT L_\mu \text{ in } \LL(H^{-1}(\Omega),H_0^1(\Omega)).
	\end{equation*}
	If $\{\mu_n\}$ $\gamma$-converges to $\mu$, 
	we write $\mu_n \togamma \mu$.
\end{definition}
The name $\gamma$-convergence stems from the observation
that this is closely related to the $\Gamma$-convergence of suitable functionals.
To this end,
we define $F_\mu : L^2(\Omega) \to [0,\infty]$ via
\begin{equation}
	\label{eq:functional_F_mu}
	F_\mu(u)
	:=
	\begin{cases}
		\int_\Omega \abs{\nabla u}^2 \, \dx + \int_\Omega u^2 \, \d\mu & \text{if } u \in H_0^1(\Omega) \cap L^2_\mu(\Omega) \\
		+\infty & \text{else}
	\end{cases}
\end{equation}
for all $u \in L^2(\Omega)$ and $\mu \in \MM_0(\Omega)$.
\begin{definition}
	\label{def:Gamma_convergence}
	Let $\{\mu_n\}\subset\MM_0(\Omega)$ and $\mu\in\MM_0(\Omega)$ be given.
	We say that the functionals $F_{\mu_n}$ $\Gamma$-converge towards $F_\mu$ in $L^2(\Omega)$ if and only if
	\begin{subequations}
		\label{eq:Gamma_convergence}
		\begin{align}
			\label{eq:Gamma_convergence_1}
			\forall \{u_n\} \subset L^2(\Omega) \text{ with } u_n \to u \text{ in } L^2(\Omega) :& \qquad F_\mu(u) \le \liminf_{n \to \infty} F_{\mu_n}(u_n)
			\\
			\label{eq:Gamma_convergence_2}
			\exists \{u_n\} \subset L^2(\Omega) \text{ with } u_n \to u \text{ in } L^2(\Omega) :& \qquad F_\mu(u) = \lim_{n \to \infty} F_{\mu_n}(u_n)
		\end{align}
	\end{subequations}
	hold for all $u \in L^2(\Omega)$.
	In this case, we write $F_{\mu_n} \toGamma F_\mu$ in $L^2(\Omega)$.
\end{definition}

The following lemma shows equivalent conditions for $\gamma$-convergence.
\begin{lemma}
	\label{lem:equivalencies_gamma_convergence}
	Let $\{\mu_n\} \subset \MM_0(\Omega)$ and $\mu \in \MM_0(\Omega)$ be given. 
	Then, the following statements are equivalent:\\
	\begin{minipage}[b]{.5\textwidth}
		\begin{enumerate}
			\item\label{it:equiv_1} $\mu_n \togamma \mu$.
				\stepcounter{enumi}
			\item\label{it:equiv_2} $L_{\mu_n} \toSOT L_\mu \text{ in } \LL(L^2(\Omega), L^2(\Omega))$.
				\stepcounter{enumi}
			\item\label{it:equiv_5} $L_{\mu_n}(1) \to L_\mu(1)$ in $L^2(\Omega)$.
		\end{enumerate}
	\end{minipage}%
	\begin{minipage}[b]{.5\textwidth}
		\begin{enumerate}[start=2]
			\item\label{it:equiv_6} $F_{\mu_n} \toGamma F_\mu$ in $L^2(\Omega)$.
				\stepcounter{enumi}
			\item\label{it:equiv_3} $L_{\mu_n} \toWOT L_\mu \text { in } \LL(L^2(\Omega),H_0^1(\Omega))$.
				\stepcounter{enumi}
			\item\label{it:equiv_4} $L_{\mu_n}(1) \weakly L_\mu(1)$ in $H_0^1(\Omega)$.
		\end{enumerate}
	\end{minipage}%
\end{lemma}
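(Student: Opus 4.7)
The plan is to close a circular chain, say $\text{(i)} \Rightarrow \text{(iv)} \Rightarrow \text{(vi)} \Rightarrow \text{(v)} \Rightarrow \text{(iii)} \Rightarrow \text{(ii)} \Rightarrow \text{(i)}$, with the uniform bound $\|L_\mu\|_{\LL(H^{-1}(\Omega),H_0^1(\Omega))} \le 1$ --- obtained by testing \eqref{weak_formulation_relaxed_BVP} with $y$ itself and valid independently of $\mu \in \MM_0(\Omega)$ --- underpinning every density and passage-to-the-limit step.

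Four of the six links are essentially routine. (i) $\Rightarrow$ (iv) is restriction along the continuous embedding $L^2(\Omega) \hookrightarrow H^{-1}(\Omega)$; (iv) $\Rightarrow$ (vi) is specialisation of the input to $1 \in L^2(\Omega)$; (vi) $\Rightarrow$ (v) follows from Rellich--Kondrachov, since $\{L_{\mu_n}(1)\}$ is bounded in $H_0^1(\Omega)$ by the a priori estimate, so weak convergence in $H_0^1$ upgrades to strong convergence in $L^2$. For (ii) $\Rightarrow$ (i), note that $L_{\mu_n}(f)$ is the unique minimiser over $L^2(\Omega)$ of $F_{\mu_n} - 2\dual{f}{\cdot}$; $\Gamma$-convergence combined with the equi-coercivity provided by the uniform bound yields strong $L^2$-convergence of minimisers for every $f \in L^2(\Omega)$, and a density argument then propagates WOT convergence from $\LL(L^2,H_0^1)$ to $\LL(H^{-1},H_0^1)$.

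The central step is (v) $\Rightarrow$ (iii). Each $L_{\mu_n}$ is self-adjoint on $L^2(\Omega)$ because the bilinear form in \eqref{weak_formulation_relaxed_BVP} is symmetric, and it satisfies a weak maximum principle: $0 \le u \le M$ a.e.\ implies $0 \le L_{\mu_n}(u) \le M \, L_{\mu_n}(1)$ a.e. Consequently, for every $u \in L^\infty(\Omega)^+$ the family $\{L_{\mu_n}(u)\}$ is dominated in $L^2(\Omega)$ by the convergent sequence $M L_{\mu_n}(1)$. Combined with weak $L^2$-convergence inferred from (v) via the self-adjointness identity $(L_{\mu_n}(u), v)_{L^2} = (u, L_{\mu_n}(v))_{L^2}$ tested against smooth $v$, this gives strong $L^2$-convergence on $L^\infty(\Omega)^+$; a positive/negative decomposition and a truncation argument then extend the conclusion to all $u \in L^2(\Omega)$.

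The last link, (iii) $\Rightarrow$ (ii), is the classical Mosco/Attouch correspondence between strong resolvent convergence of a family of positive self-adjoint operators on a Hilbert space and $\Gamma$-convergence of the associated closed quadratic forms; here $L_\mu$ is precisely the resolvent of $F_\mu$ with respect to the $L^2$-inner product, so the correspondence applies verbatim. The main obstacle is clearly (v) $\Rightarrow$ (iii): promoting convergence at the single input $u \equiv 1$ to convergence at all $u \in L^2(\Omega)$ requires the careful interplay of self-adjointness, the weak maximum principle, and a truncation/density step, since without the pointwise domination by $L_{\mu_n}(1)$ one only recovers weak, not strong, $L^2$-convergence.
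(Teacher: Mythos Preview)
Your chain (i) $\Rightarrow$ (iv) $\Rightarrow$ (vi) $\Rightarrow$ (v) and the links (iii) $\Rightarrow$ (ii) $\Rightarrow$ (i) are correct, but the central step (v) $\Rightarrow$ (iii) has a genuine gap. The self-adjointness argument you sketch is circular: the identity $(L_{\mu_n}(u),v)_{L^2} = (u,L_{\mu_n}(v))_{L^2}$ yields weak $L^2$-convergence of $L_{\mu_n}(u)$ to $L_\mu(u)$ only if you already know that $L_{\mu_n}(v) \to L_\mu(v)$ (at least weakly in $L^2$) for the test functions $v$. Hypothesis (v) gives this only at the single input $v=1$; for any other $v$ it is exactly the statement you are trying to prove. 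The domination $0 \le L_{\mu_n}(u) \le M\,L_{\mu_n}(1)$ does not help: pointwise domination by a strongly $L^2$-convergent sequence does not upgrade weak to strong $L^2$-convergence (oscillatory examples such as $\sin^2(n\cdot)$ on a bounded interval are the standard obstruction), and in any case it contributes nothing to identifying the weak limit as $L_\mu(u)$ rather than some other function.

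The paper does not attempt (v) $\Rightarrow$ (iii) directly. It establishes the easy chain (i) $\Rightarrow$ (iii) $\Rightarrow$ (iv) $\Rightarrow$ (vi) $\Leftrightarrow$ (v) and then closes the loop by invoking \cite[Theorem~5.1]{DalMasoMurat2004} for the nontrivial implication (vi) $\Rightarrow$ (i). That result---equivalently, that the map $\mu \mapsto L_\mu(1)$ is injective on $\MM_0(\Omega)$ in a way stable under limits---is precisely what is needed to pass from convergence of the torsion functions to convergence of the operators $L_{\mu_n}$ at \emph{every} input. A direct elementary proof of (v) $\Rightarrow$ (iii) would amount to reproving this theorem, which is why your combination of self-adjointness and the comparison principle is not enough.
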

\begin{proof}
	Let $\mu_n \togamma \mu$. 
	Then,
	for all $f \in H^{-1}(\Omega)$,
	in particular,
	for all $f \in L^2(\Omega)$,
	it holds $L_{\mu_n}(f) \weakly L_\mu(f)$ in $H_0^1(\Omega)$.
	Since $H_0^1(\Omega)$ is compactly embedded into $L^2(\Omega)$,
	it follows $L_{\mu_n}(f) \to L_\mu(f)$ in $L^2(\Omega)$,
	and thus \ref{it:equiv_2} holds.
	
	Now, 
	suppose that $L_{\mu_n}\toSOT L_{\mu}$ in $\LL(L^2(\Omega),L^2(\Omega))$.
	Let $f \in L^2(\Omega)$.
	By \cite[(3.7)]{ButtazzoDalMaso1991},
	there is a constant $c>0$,
	such that 
	$\|L_{\mu_n}(f)\|_{H_0^1(\Omega)}\leq c \|f\|$ holds.
	Thus there is a subsequence $\{L_{\mu_{n_k}}\}$ 
	that converges weakly in $H_0^1(\Omega)$.
	Hence $\{L_{\mu_{n_k}}(f)\}$ converges strongly in $L^2(\Omega)$
	and the limit has to be $L_{\mu}(f)$.
	Thus, 
	the whole sequence $\{L_{\mu_n}(f)\}$ converges weakly to $L_{\mu}(f)$ in $H_0^1(\Omega)$
	and \ref{it:equiv_3} follows.
	The proof that \ref{it:equiv_4} follows from \ref{it:equiv_5} is also contained in this argument.
	
	\ref{it:equiv_4} is an immediate consequence of \ref{it:equiv_3}
	and \ref{it:equiv_5} follows from \ref{it:equiv_4} by the compact embedding of $H_0^1(\Omega)$ into $L^2(\Omega)$.
	
	The equivalence of \ref{it:equiv_4} and \ref{it:equiv_1} has been shown, in a more general setting, in \cite[Theorem~5.1]{DalMasoMurat2004}.
	
	The equivalence between \ref{it:equiv_2} and \ref{it:equiv_6}
	can be checked as in \cite[Proposition~4.10]{DalMasoMosco1987}.
\end{proof}
Using the equivalence of $\mu_n \togamma \mu$
and $\norm{ L_{\mu_n}(1) - L_\mu(1)}_{L^2(\Omega)} \to 0$,
we can equip $\MM_0(\Omega)$ with a metric.
\begin{corollary}
	The $\gamma$-convergence on $\MM_0(\Omega)$ is metrizable.
\end{corollary}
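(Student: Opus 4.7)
The plan is to propose the explicit candidate metric
\begin{equation*}
  d(\mu, \nu) := \norm{L_\mu(1) - L_\nu(1)}_{L^2(\Omega)}
\end{equation*}
on $\MM_0(\Omega) \times \MM_0(\Omega)$, where $1$ is viewed as an element of $L^2(\Omega) \subset H^{-1}(\Omega)$, and to verify the metric axioms together with the description of convergent sequences. Symmetry and the triangle inequality for $d$ are inherited directly from the corresponding properties of the $L^2$-norm, so only positive definiteness and the identification of the induced topology with $\gamma$-convergence require work.

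The description of convergence is essentially for free: the equivalence \ref{it:equiv_1} $\Leftrightarrow$ \ref{it:equiv_5} of \cref{lem:equivalencies_gamma_convergence} reads
\begin{equation*}
  \mu_n \togamma \mu \quad\Longleftrightarrow\quad L_{\mu_n}(1) \to L_\mu(1) \text{ in } L^2(\Omega) \quad\Longleftrightarrow\quad d(\mu_n, \mu) \to 0,
\end{equation*}
so once $d$ is shown to be a metric, it will automatically metrize $\gamma$-convergence.

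For positive definiteness I would argue via uniqueness of $\gamma$-limits: if $d(\mu, \nu) = 0$, then the equivalence above applied to the constant sequence $\mu_n := \mu$ yields $\mu \togamma \nu$, while trivially $\mu \togamma \mu$, hence $\mu = \nu$. Uniqueness of $\gamma$-limits itself reduces to the injectivity of the assignment $\mu \mapsto L_\mu$ on $\MM_0(\Omega)$: two $\gamma$-limits $\mu, \nu$ of a common sequence produce $L_\mu = L_\nu$ as WOT-limits in $\LL(H^{-1}(\Omega), H_0^1(\Omega))$, and WOT-limits of operators are unique.

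The delicate point is therefore this injectivity, i.e.\ the implication $L_\mu = L_\nu \Rightarrow \mu = \nu$ on $\MM_0(\Omega)$. This is precisely the reason for including the regularity condition in \cref{def:capacitary_measure}, as explicitly flagged in the paragraph preceding the definition of $\gamma$-convergence; without it, one must pass to equivalence classes. A way to extract the injectivity from the material already at hand is to recover, from $L_\mu$, the quadratic form $F_\mu$ from \eqref{eq:functional_F_mu} (via $F_\mu(L_\mu f) = \dual{f}{L_\mu f}$ and a density argument), hence the integral $u \mapsto \int_\Omega u^2 \, \d\mu$ on $H_0^1(\Omega) \cap L_\mu^2(\Omega)$; polarization together with the regularity of capacitary measures then pins down $\mu(B)$ for every Borel set $B \subset \Omega$.
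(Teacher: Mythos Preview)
Your approach is essentially identical to the paper's: the same explicit metric $d(\mu,\nu)=\norm{L_\mu(1)-L_\nu(1)}_{L^2(\Omega)}$ is proposed, and the equivalence \ref{it:equiv_1}$\Leftrightarrow$\ref{it:equiv_5} of \cref{lem:equivalencies_gamma_convergence} is invoked to match $d$-convergence with $\gamma$-convergence. The paper is terser on positive definiteness, merely remarking (a few lines later) that the torsion function determines $\mu$ and deferring to \cite[Proposition~3.4]{DalMasoGarroni1994} and \cite[Theorem~1.20]{DalMasoGarroni1997}, whereas you sketch the recovery of $\mu$ from $F_\mu$ directly.
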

A different proof of this metrizability can be found in \cite[Proposition~4.9]{DalMasoMosco1987}.

The metric space $\MM_0(\Omega)$ has many nice properties:
it is complete (\cref{lem:completeness}),
the subset $\{ \infty_{\Omega \setminus O} : O \subset \Omega \text{ is quasi-open}\}$ is dense (\cref{lem:quasi_open_sets_dense})
and $\MM_0(\Omega)$ is compact (\cref{thm:compactness}).
\begin{lemma}
\label{lem:completeness}
	The metric space $\MM_0(\Omega)$ is complete.
\end{lemma}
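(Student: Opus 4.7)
The plan is to translate the $\gamma$-Cauchy condition into a Cauchy condition in $L^2(\Omega)$, identify the $L^2$-limit with $L_\mu(1)$ for some capacitary measure $\mu$, and then upgrade subsequential convergence to convergence of the whole sequence via a standard metric-space argument.

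First I would take a Cauchy sequence $\{\mu_n\}\subset\MM_0(\Omega)$ in the $\gamma$-metric. By the equivalence of \ref{it:equiv_1} and \ref{it:equiv_5} in \cref{lem:equivalencies_gamma_convergence}, this is exactly the statement that $\{w_n\}:=\{L_{\mu_n}(1)\}$ is Cauchy in $L^2(\Omega)$; hence $w_n \to w$ in $L^2(\Omega)$ for some $w \in L^2(\Omega)$. Using the uniform a priori bound $\norm{L_{\mu_n}(1)}_{H_0^1(\Omega)}\le c$ from \cite[(3.7)]{ButtazzoDalMaso1991}, we may extract a subsequence with $w_{n_k}\weakly \tilde w$ in $H_0^1(\Omega)$; by uniqueness of the weak $L^2$-limit, $\tilde w=w\in H_0^1(\Omega)$.

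The main obstacle is producing a candidate limit $\mu\in\MM_0(\Omega)$ with $L_\mu(1)=w$. The cleanest way is to appeal to the sequential compactness of $\gamma$-convergence on $\MM_0(\Omega)$ established in \cite[Theorem~4.14]{DalMasoMosco1987} (this is also recorded later in this paper as \cref{thm:compactness}, whose proof does not logically rely on the present lemma). That result furnishes a further subsequence $\{\mu_{n_{k_j}}\}$ and a measure $\mu\in\MM_0(\Omega)$ such that $\mu_{n_{k_j}}\togamma \mu$. Applying the equivalence in \cref{lem:equivalencies_gamma_convergence} once more yields $L_{\mu_{n_{k_j}}}(1)\to L_\mu(1)$ in $L^2(\Omega)$, and combining with the already-established convergence $w_n\to w$ gives $L_\mu(1)=w$.

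Finally I would invoke the elementary metric-space fact that a Cauchy sequence possessing a convergent subsequence converges (with the same limit) to conclude $\mu_n\togamma \mu$, which proves completeness. Alternatively, and independently of the forward reference, one can construct $\mu$ directly by a $\Gamma$-convergence argument: extract a $\Gamma$-limit $F$ of a subsequence of $\{F_{\mu_n}\}$ in $L^2(\Omega)$ using the general compactness of $\Gamma$-convergence for separable metric spaces, observe that the class of functionals $\{F_\mu:\mu\in\MM_0(\Omega)\}$ is closed under $\Gamma$-convergence (this is the substantive input from \cite{DalMasoMosco1987}), and then use the equivalence \ref{it:equiv_1}$\Leftrightarrow$\ref{it:equiv_6} to conclude. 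Either route makes the proof a short consequence of \cref{lem:equivalencies_gamma_convergence} together with known compactness results; no novel computation is required.
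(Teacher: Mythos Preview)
Your argument is correct. The paper itself does not give a proof; it simply refers to \cite[Theorem~4.14]{DalMasoMosco1987} and \cite[Theorem~4.5]{DalMasoGarroni1997}, the first of which is the same source you invoke for the compactness step. Your write-up is therefore more detailed than the paper's, but rests on the same external input: once sequential compactness of $\MM_0(\Omega)$ with respect to $\gamma$-convergence is available, the standard ``Cauchy sequence with a convergent subsequence converges'' argument yields completeness immediately. (In fact, the intermediate identification of $w\in H_0^1(\Omega)$ is not needed for this route and can be dropped.)
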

For a proof, we refer to
\cite[Theorem~4.14]{DalMasoMosco1987}
or
\cite[Theorem~4.5]{DalMasoGarroni1997}.

The next lemma shows
that the measures $\infty_{C}$ with a quasi-closed set $C \subset \Omega$
represent a dense subclass of $\MM_0(\Omega)$.

\begin{lemma}
\label{lem:quasi_open_sets_dense}
	Let $\mu$ be an element of $\MM_0(\Omega)$. 
	Then there is a sequence $\{O_n\}_{n \in \mathbb{N}}\subset \Omega$ 
	of quasi-open sets
	such that $\infty_{\Omega \setminus O_n} \togamma \mu$.
\end{lemma}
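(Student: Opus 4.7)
The plan is to reduce the problem to a form where a classical perforation construction can be applied. By \cref{lem:equivalencies_gamma_convergence}, it suffices to find quasi-open sets $O_n \subset \Omega$ such that the torsion functions $w_n := L_{\infty_{\Omega \setminus O_n}}(1)$ converge in $L^2(\Omega)$ to $w := L_\mu(1)$.

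First, using the metrizability of $\gamma$-convergence on $\MM_0(\Omega)$, a diagonal argument permits me to restrict attention to measures drawn from a $\gamma$-dense subclass. A convenient choice is the class of measures of the form $\mu = f \cdot \d x + \infty_{\Omega \setminus U}$ with $U \subset \Omega$ open and $0 \le f \in L^\infty(U)$; the $\gamma$-density of this subclass in $\MM_0(\Omega)$ is a known fact which can itself be obtained by truncation and level-set approximation applied to the torsion function $L_\mu(1)$, combined with a suitable cut-off near the boundary of $\Omega$.

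Next, for a measure $\mu = f \cdot \d x + \infty_{\Omega \setminus U}$ from this simple class, I would employ a Cioranescu--Murat-type perforation. Concretely, tile $\Omega$ by small cubes $\{Q_i^n\}$ of side $1/n$, and from each cube $Q_i^n$ contained in $U$ delete a small ball $B(x_i^n, r_i^n)$, where (for $d \ge 3$) the radius $r_i^n$ is chosen so that the $H^1$-capacity of the ball equals $f(x_i^n)\, n^{-d}$ up to a dimensional constant; for $d = 2$ the corresponding scaling is exponential. Setting $O_n := U \setminus \bigcup_i \overline{B(x_i^n, r_i^n)}$ then produces a quasi-open (in fact open) approximating set, and $\infty_{\Omega \setminus O_n}$ is a Dirichlet measure as required.

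The main obstacle is verifying that this construction actually yields $\infty_{\Omega \setminus O_n} \togamma \mu$, i.e., that the perforated torsion functions $w_n$ converge in $L^2(\Omega)$ to the ``strange term'' solution $w$ of $-\Delta w + f \, w = 1$ on $U$ with $w = 0$ outside $U$. This is exactly the content of the classical Cioranescu--Murat result (in the case of constant $f$ on nicely-behaved $U$), and its extension to variable density $f$ and quasi-open $U$ is due to Dal Maso and Mosco. Rather than reproducing the delicate capacity estimates and oscillating test-function constructions needed to verify both the $\liminf$- and $\limsup$-inequalities in the $\Gamma$-convergence characterization \eqref{eq:Gamma_convergence}, I would simply invoke those references, which the paper already uses as the source of the underlying theory.
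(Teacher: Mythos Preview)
The paper does not prove this lemma itself; it simply cites \cite[Theorem~4.16]{DalMasoMosco1987} and, for a constructive argument, \cite{DalMasoMalusa1995}. Your sketch is a reasonable outline of the latter approach---reduce to a tractable $\gamma$-dense subclass and then perforate in the style of Cioranescu--Murat---and since you too ultimately defer to those references for the delicate capacity estimates, your proposal and the paper's treatment amount to the same thing. One small remark: the intermediate class you single out ($f\,\d x + \infty_{\Omega\setminus U}$ with $f$ bounded and $U$ open) is a valid choice, but the paper's own machinery (\cref{lem:approximation_Radon}) and its later invocation of the Dal~Maso--Malusa construction (\cref{lem:radon_measures_in_derivative}) route more naturally through Radon measures, which is the class for which that construction is stated.
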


A proof can be found in \cite[Theorem~4.16]{DalMasoMosco1987}
and a more constructive argument is given in
\cite{DalMasoMalusa1995}.

The preceding lemma shows the connection 
between capacitary measures and shape optimization problem.
Due to the fact that solutions of classical Dirichlet problems
with varying (quasi-open) domains
can converge to the solution of a relaxed Dirichlet problem
with capacitary measures involved,
an optimal domain in shape optimization might not exist,
see e.g.\ \cite[Section~4.2]{BucurButtazzo2005} or \cite[Section~5.8.4]{AttouchButtazzoMichaille2014}.

The next theorem shows the compactness of $\MM_0(\Omega)$.
\begin{theorem}
\label{thm:compactness}
	Let $\{\mu_n\}$ be a sequence in $\MM_0(\Omega)$. 
	Then there exists a subsequence $\{\mu_{n_k}\}$ 
	and a measure $\mu \in \MM_0(\Omega)$ 
	such that $\mu_{n_k} \togamma \mu$.
\end{theorem}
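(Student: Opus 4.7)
The plan is to reduce the statement to a compactness argument for the sequence of torsion functions $w_n := L_{\mu_n}(1)$ in $H_0^1(\Omega)$. The key observation is Lemma~\ref{lem:equivalencies_gamma_convergence}\ref{it:equiv_4}, which states that $\mu_n \togamma \mu$ is equivalent to $w_n \weakly L_\mu(1)$ in $H_0^1(\Omega)$. Hence it suffices to extract a weakly convergent subsequence of $\{w_n\}$ in $H_0^1(\Omega)$ and to construct a capacitary measure $\mu \in \MM_0(\Omega)$ realizing the limit as $L_\mu(1)$.

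First I would establish the uniform $H_0^1(\Omega)$-boundedness of $\{w_n\}$. Testing the weak formulation \eqref{weak_formulation_relaxed_BVP} of the defining equation $-\Delta w_n + \mu_n w_n = 1$ with $z = w_n$ yields
\begin{equation*}
    \int_\Omega \abs{\nabla w_n}^2 \, \dx + \int_\Omega w_n^2 \, \d\mu_n
    = \int_\Omega w_n \, \dx
    \le \abs{\Omega}^{1/2} \norm{w_n}_{L^2(\Omega)},
\end{equation*}
and combined with the Poincar\'e inequality this gives a bound $\norm{w_n}_{H_0^1(\Omega)} \le C$ independent of $n$. By the reflexivity of $H_0^1(\Omega)$ and the compact embedding into $L^2(\Omega)$, I extract a subsequence so that $w_{n_k} \weakly w$ in $H_0^1(\Omega)$ and $w_{n_k} \to w$ in $L^2(\Omega)$ for some limit $w \in H_0^1(\Omega)$.

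The main obstacle is the construction of a measure $\mu \in \MM_0(\Omega)$ with $L_\mu(1) = w$, since there is no a priori candidate for $\mu$ as a limit of the $\mu_n$. The standard approach examines the quasi-open set $\{w > 0\}$, on which one expects a regular part of $\mu$ to be determined via the distributional identity $\mu \, w = 1 + \Delta w$, while on the complementary quasi-closed set $\{w = 0\}$ one adjoins the singular part $\infty_{\{w = 0\}}$ in the sense introduced after \eqref{weak_formulation_relaxed_BVP}. Verifying that the resulting object is indeed a well-defined capacitary measure, and not merely a formal expression, requires showing that $1 + \Delta w$ defines a nonnegative Radon measure on $\{w > 0\}$, that the quotient by $w$ still vanishes on sets of capacity zero, and that the regularity condition of \cref{def:capacitary_measure} is met. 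This technical construction can be cited from \cite[Theorem~4.14]{DalMasoMosco1987}. Once such a $\mu$ is in hand, the equivalence \ref{it:equiv_4} in \cref{lem:equivalencies_gamma_convergence} immediately yields $\mu_{n_k} \togamma \mu$, completing the proof.
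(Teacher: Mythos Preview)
Your proposal is correct and aligns with the paper's treatment: the paper does not give a proof but simply refers to \cite[Theorem~4.14]{DalMasoMosco1987}, which is exactly the result you invoke for the technical construction of $\mu$ from the weak limit $w$. Your sketch of the torsion-function compactness argument and the reconstruction of $\mu$ from $w$ via $1+\Delta w$ on $\{w>0\}$ and $\infty_{\{w=0\}}$ on the complement is precisely the strategy behind that reference, so you have essentially unpacked what the paper leaves as a citation.
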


For a proof, we refer to 
\cite[Theorem~4.14]{DalMasoMosco1987}.
Therein, one has to replace $\mathbb{R}^n$ by $\Omega$ to obtain the desired result.

Many properties of capacitary measures
can be obtained by studying the so-called torsion function $w_\mu := L_\mu(1)$.
Indeed, we have already seen in \cref{lem:equivalencies_gamma_convergence}
that it is sufficient to check the convergence $w_{\mu_n} \to w_\mu$ in $L^2(\Omega)$
of the torsion functions
to obtain $\mu_n \togamma \mu$.
This implies in particular,
that
the measure $\mu$ is uniquely determined
by its torsion function,
see also
\cite[Proposition~3.4]{DalMasoGarroni1994}
and \cite[Theorem~1.20]{DalMasoGarroni1997}.

Moreover,
the next result shows that
the torsion function $w$ associated with a quasi-open set $O \subset \Omega$
is positive on $O$, whereas the fine support of $1 + \Delta w$ is $\Omega \setminus O$.
\begin{theorem}
	\label{thm:torsion_function_properties}
	Let $O \subset \Omega$ be quasi-open and set $w := L_O(1)$.
	Then,
	$w \geq 0$,
	$O =_q \{w > 0\}$
	and
	$1 + \Delta w \in H^{-1}(\Omega)^+$
	with
	$\fsupp(1 + \Delta w) =_q \Omega \setminus O$.
\end{theorem}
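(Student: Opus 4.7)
My plan is to prove the four assertions in sequence. For the nonnegativity of $w$, I test the defining identity $\int_\Omega \nabla w \cdot \nabla v \, \dx = \int_\Omega v \, \dx$ (valid for $v \in H_0^1(O)$) against $v = w^- \in H_0^1(O)$. Using $\nabla w \cdot \nabla w^- = -\abs{\nabla w^-}^2$ almost everywhere, this immediately forces $-\int_\Omega \abs{\nabla w^-}^2\,\dx = \int_\Omega w^-\,\dx \ge 0$, so $w^- = 0$ and $w \ge 0$ q.e.

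For the identity $O =_q \{w > 0\}$, the inclusion $\{w > 0\} \subset_q O$ is immediate from $w \in H_0^1(O)$. For the reverse, nonnegativity yields $w \in H_0^1(\{w>0\})$; since $\{w>0\} \subset_q O$, the defining equation for $w$ restricts to a valid equation on $H_0^1(\{w>0\})$, so $w = L_{\{w>0\}}(1) = L_{\infty_{\Omega \setminus \{w>0\}}}(1)$. I then invoke injectivity of $\mu \mapsto L_\mu(1)$ on $\MM_0(\Omega)$, which follows from \cref{lem:equivalencies_gamma_convergence} together with uniqueness of $\gamma$-limits (the $\gamma$-topology is metrizable, hence Hausdorff), to conclude $\infty_{\Omega \setminus O} = \infty_{\Omega \setminus \{w>0\}}$ and therefore $O =_q \{w>0\}$.

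For $1 + \Delta w \in H^{-1}(\Omega)^+$, the key idea is to enlarge the constraint set. I introduce $K_- := \{v \in H_0^1(\Omega) : v \le 0 \text{ q.e.\ on } \Omega \setminus O\}$ and the energy $J(v) := \tfrac12 \int_\Omega \abs{\nabla v}^2 \, \dx - \int_\Omega v \, \dx$. For $v \in K_-$, the positive part $v^+$ lies in $H_0^1(O)$, and using $\nabla v^+ \cdot \nabla v^- = 0$ a.e.\ I obtain the chain
\begin{equation*}
	J(v) \;=\; J(v^+) + \tfrac12 \int_\Omega \abs{\nabla v^-}^2 \, \dx + \int_\Omega v^- \, \dx \;\ge\; J(v^+) \;\ge\; J(w),
\end{equation*}
because $w$ minimizes $J$ over $H_0^1(O)$. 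Hence $w$ is also the unique minimizer of $J$ on the larger set $K_-$, and the resulting variational inequality $\dual{-\Delta w - 1}{\phi} \ge 0$ for $\phi \in K_-$, specialized to $\phi = -\psi$ with arbitrary $\psi \in H_0^1(\Omega)^+$ (noting $-\psi \in K_-$), yields $\dual{1 + \Delta w}{\psi} \ge 0$.

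For the fine support identity, set $\xi := 1 + \Delta w$ and denote $\Sigma := \fsupp(\xi)$. Testing $\xi$ with $w \in H_0^1(O)^+$ gives $\dual{\xi}{w} = \int_\Omega w - \int_\Omega \abs{\nabla w}^2 = 0$; combined with $w > 0$ q.e.\ on $O$ from (ii), \cref{lem:fine_support}(iii) forces $\Sigma \subset_q \Omega \setminus O$. For the reverse I argue by contradiction: if $\Sigma \subsetneq_q \Omega \setminus O$, then $\tilde O := \Omega \setminus \Sigma$ is quasi-open with $\tilde O \supsetneq_q O$, and $w' := L_{\tilde O}(1) \ge 0$. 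Since $H_0^1(O) \subset H_0^1(\tilde O)$, the variational characterization gives $J(w') \le J(w)$, which in view of $J(z) = -\tfrac12 \int z \, \dx$ at either torsion function translates into $\int w' \ge \int w$; equality would force $w' = w$ by uniqueness of the minimizer, and then (ii) applied to $\tilde O$ would give $\tilde O =_q \{w'>0\} = \{w>0\} =_q O$, contradicting $\tilde O \supsetneq_q O$. So $\int w' > \int w$ strictly, and using $w \in H_0^1(\tilde O)$ as a test function in the equation for $w'$ yields $\int_\Omega \nabla w \cdot \nabla w' \, \dx = \int_\Omega w \, \dx$, hence $\dual{\xi}{w'} = \int_\Omega w' - \int_\Omega w > 0$. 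But $w' \in H_0^1(\tilde O) \cap H_0^1(\Omega)^+$ vanishes q.e.\ on $\Sigma = \fsupp(\xi)$, so \cref{lem:fine_support}(iii) forces $\dual{\xi}{w'} = 0$, a contradiction. I expect the main technical obstacle to be precisely this reverse inclusion: the strict inequality $\int w' > \int w$ that drives the contradiction depends on a delicate combination of the variational comparison of torsion functions with the uniqueness statement from (ii), and neither ingredient alone suffices.
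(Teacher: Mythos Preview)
Your proof is correct. For the first three assertions the paper simply cites the literature (\cite{DalMasoGarroni1994}, \cite{Velichkov2015}, \cite{ChipotDalMaso1992}), whereas you supply self-contained arguments; in particular, your route to $1+\Delta w \ge 0$ via the enlarged cone $K_-$ is a nice direct proof of the Chipot--Dal~Maso result in this special case, and your route to $O =_q \{w>0\}$ via injectivity of $\mu \mapsto L_\mu(1)$ is exactly the mechanism the paper alludes to just before the theorem. For the fine-support identity the two proofs share the same core idea: introduce the torsion function $\hat w$ of the quasi-open complement $\tilde O := \Omega \setminus \fsupp(1+\Delta w)$, show $\hat w = w$, and then read off $\tilde O =_q O$ from part~(ii). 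The difference lies in how $\hat w = w$ is obtained. The paper combines the symmetry $\dual{\Delta w}{\hat w} = \dual{\Delta \hat w}{w}$ with $\dual{1+\Delta w}{\hat w} = 0 = \dual{1+\Delta \hat w}{w}$ to get $\int(\hat w - w)\,\dx = 0$, and then invokes the comparison principle $\hat w \ge w$ from \cite{DalMasoMosco1986}. You instead compute $\dual{\xi}{w'} = \int w' - \int w$ by testing the equation for $w'$ with $w$, use $w' = 0$ q.e.\ on $\fsupp(\xi)$ to force this pairing to vanish, and conclude via uniqueness of the minimizer of $J$ on $H_0^1(\tilde O)$; this avoids the external comparison principle. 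Your framing as a proof by contradiction with a case split on $\int w' > \int w$ versus $\int w' = \int w$ is unnecessary, though---your own identity $\dual{\xi}{w'} = \int w' - \int w$ together with $\dual{\xi}{w'}=0$ already yields equality directly, so the argument streamlines to the paper's structure without the detour.
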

\begin{proof}
	It holds $w \geq 0$ by \cite[Proposition~2.4]{DalMasoGarroni1994}.
	The assertions $O =_q \{w > 0\}$
	and $1 + \Delta w \in H^{-1}(\Omega)^+$ are well known,
	see, e.g., \cite[Proposition~3.4.26]{Velichkov2015} and \cite[Theorem~1]{ChipotDalMaso1992}.

	It remains to check
	$C :=_q \fsupp(1 + \Delta w) =_q \Omega \setminus O$.
	Using the characterization of 
	\cref{lem:fine_support},
	we have
	\begin{equation}
		\label{eq:characterization_fine_support_2}
		\dual{1 + \Delta w}{v} = 0
		\quad\Leftrightarrow\quad
		v = 0 \text{ q.e.\ on } C
		\qquad
		\forall v \in H_0^1(\Omega)^+.
	\end{equation}
	Using $w = L_O(1)$, this directly implies that
	$C \subset_q \Omega \setminus O$.
	Next, we define
	$\hat w = L_{\Omega \setminus C}(1)$.
	Since $w \in H_0^1(O) \subset H_0^1(\Omega \setminus C)$,
	we have
	$\dual{1 + \Delta \hat w}{w} = 0$.
	Moreover, \eqref{eq:characterization_fine_support_2}
	implies
	$\dual{1 + \Delta w}{\hat w} = 0$.
	Using $\dual{\Delta w}{\hat w} = \dual{\Delta \hat w}{w}$,
	this implies
	\begin{equation*}
		\dual{1}{\hat w - w}
		=
		\int_\Omega \hat w - w \, \dx
		=
		0.
	\end{equation*}
	Next, the comparison principle
	from \cite[Theorem~2.10]{DalMasoMosco1986},
	see also \cite[Proposition~2.5]{DalMasoGarroni1994},
	implies
	$\hat w \ge w$
	and, therefore,
	$\hat w = w$.
	Finally, the first part of the proof
	yields
	$\Omega \setminus C =_q \{\hat w > 0\} =_q \{w > 0\} =_q O$.
	Thus,
	$C =_q \fsupp(1+\Delta w) =_q \Omega \setminus O$.
\end{proof}

The next result shows that every capacitary measure
can be approximated by Radon measures.
Here,
a Radon measure is a Borel measure which is finite on all compact subsets of $\Omega$.
\begin{lemma}
	\label{lem:approximation_Radon}
	Let $\mu \in \MM_0(\Omega)$. 
	Then there exists an increasing sequence of Radon measures $\{\mu_n\}$ 
	such that $\mu_n \togamma \mu$.
\end{lemma}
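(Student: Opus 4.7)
The idea is to construct an explicit Radon truncation of $\mu$ based on its torsion function and to verify $\gamma$-convergence through the monotone $\Gamma$-convergence criterion from \cref{lem:equivalencies_gamma_convergence}.

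Let $w := L_\mu(1)$ be the torsion function of $\mu$. Testing \eqref{weak_formulation_relaxed_BVP} against $w$ itself yields
\begin{equation*}
	\int_\Omega \abs{\nabla w}^2 \, \dx + \int_\Omega w^2 \, \d\mu = \int_\Omega w \, \dx < \infty,
\end{equation*}
so Chebyshev's inequality implies $\mu(\{w > 1/n\}) \le n^2 \int_\Omega w^2 \, \d\mu < \infty$. Setting $A_n :=_q \{w > 1/n\}$ and $C :=_q \{w = 0\}$, the quasi-open sets $A_n$ are increasing with $\bigcup_n A_n =_q \Omega \setminus C$. Fix once and for all a finite measure $\nu \in \MM_0(\Omega)$ with $\fsupp \nu =_q C$ (take $\nu = 0$ if $\capa(C) = 0$; otherwise existence follows from standard capacity arguments together with \cref{lem:fine_support}). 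Define
\begin{equation*}
	\mu_n := \mu \llcorner A_n + n \, \nu.
\end{equation*}
Then $\mu_n(\Omega) \le \mu(A_n) + n \, \nu(\Omega) < \infty$, so each $\mu_n$ is a finite (hence Radon) measure in $\MM_0(\Omega)$, and the sequence is increasing because $A_n \subset_q A_{n+1}$ and $\nu \ge 0$.

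\textbf{Verification of $\gamma$-convergence.} The critical structural input is the identity $\mu = \mu + \infty_C$: the torsion function of $\mu + \infty_C$ is again $w$ (since $w = 0$ q.e.\ on $C$ makes the $\infty_C$-part invisible in the relaxed Dirichlet problem), and by \cref{thm:torsion_function_properties} the torsion function determines the capacitary measure uniquely. This identity forces $\mu(B) = +\infty$ whenever $\capa(B \cap C) > 0$, hence $u = 0$ q.e.\ on $C$ for every $u \in H_0^1(\Omega) \cap L^2_\mu(\Omega)$. Combined with \cref{lem:fine_support}\,(iii), this yields $\int_\Omega u^2 \, \d\nu = 0$ for such $u$, and monotone convergence gives
\begin{equation*}
	F_{\mu_n}(u) = \int_\Omega \abs{\nabla u}^2 \, \dx + \int_{A_n} u^2 \, \d\mu + n \int_\Omega u^2 \, \d\nu \;\uparrow\; F_\mu(u).
\end{equation*}
For $u \in H_0^1(\Omega) \setminus L^2_\mu(\Omega)$, at least one of the last two summands diverges (either the $\mu$-integral on $\Omega \setminus C$ blows up, picked up by $\mu \llcorner A_n$ via monotone convergence, or $u$ is non-trivial q.e.\ on a cap-positive subset of $C$, so that $n \int u^2 \, \d\nu \to +\infty$ by \cref{lem:fine_support}\,(iii)), matching $F_\mu(u) = +\infty$; for $u \notin H_0^1(\Omega)$ both sides equal $+\infty$. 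Since $\{F_{\mu_n}\}$ is an increasing sequence of lower semicontinuous functionals with lower semicontinuous pointwise limit $F_\mu$, this pointwise monotone convergence upgrades to $\Gamma$-convergence in $L^2(\Omega)$, and \cref{lem:equivalencies_gamma_convergence} delivers $\mu_n \togamma \mu$.

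\textbf{Main obstacle.} The crux is the identity $\mu = \mu + \infty_C$. Without this structural fact -- i.e., without knowing that $\mu$ already acts as $\infty_C$ on $C = \{w_\mu = 0\}$ -- the penalization $n\nu$ would contribute a growing term also on $H_0^1$-functions that are admissible for $F_\mu$, obstructing $\Gamma$-convergence. A smaller technical point is the existence of a finite $\nu \in \MM_0(\Omega)$ whose fine support equals the prescribed quasi-closed set $C$; this is a standard consequence of capacitary equilibrium theory but is not spelled out in the excerpt.
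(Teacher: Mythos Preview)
Your proof is correct and takes a genuinely different route from the paper. The paper invokes the explicit construction of \cite[Proposition~4.7]{DalMasoGarroni1994}: it interpolates torsion functions, setting $w_n := (1-1/n)\,w_\mu + (1/n)\,w_0$ with $w_0 = L_\Omega(1)$, defines $\mu_n$ as the measure with density $1/w_n$ against $1+\Delta w_n$, and then checks monotonicity by an explicit algebraic manipulation yielding $\mu_n(B) = \int_B (w + \tfrac{1}{n-1}w_0)^{-1}\,\d(1+\Delta w)$. In contrast, your approach is a direct truncation-plus-penalisation: restrict $\mu$ to the superlevel sets $\{w_\mu > 1/n\}$ (which have finite $\mu$-mass by Chebyshev) and add a growing multiple of a fixed measure supported on $C=\{w_\mu=0\}$, then appeal to monotone $\Gamma$-convergence. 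Your route is more self-contained and conceptually transparent; the paper's route gives an explicit closed formula and, as a by-product, $\mu_n \le \mu$ (which it later uses in \cref{thm:equality_for_pBsw}). In fact your $\mu_n$ also satisfy $\mu_n \le \mu$, since $\nu$ is concentrated on $C$ and $\mu$ is $+\infty$ on every Borel set meeting $C$ in positive capacity.

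Two small imprecisions are worth tightening. First, the natural candidate for $\nu$ is $\nu := 1 + \Delta L_{\Omega\setminus C}(1) \in H^{-1}(\Omega)^+$, furnished directly by \cref{thm:torsion_function_properties}; this is a Radon measure with $\fsupp(\nu) =_q C$ but need not be finite. Finiteness of $\nu$ is irrelevant anyway---you only need $\mu_n$ Radon---but you should place $\nu$ in $H^{-1}(\Omega)^+$ rather than merely in $\MM_0(\Omega)$, because \cref{lem:fine_support}\,(iii), which you invoke, is stated for that class. Second, that lemma gives $\int_\Omega |u|\,\d\nu > 0$ (via $v = |u| \in H_0^1(\Omega)^+$), from which $\int_\Omega u^2\,\d\nu > 0$ follows after the easy observation that $\nu(\{|u|>0\})>0$; this short bridge is implicit in your write-up.
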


\begin{proof}
	Let $w_0:=L_\Omega(1)$
	and for $\mu \in \MM_0(\Omega)$ 
	let $w:=L_\mu(1)$.
	In \cite[Proposition~4.7]{DalMasoGarroni1994}, 
	it is shown that for the sequence $\{w_n\}$ defined by
	\begin{equation*}
	w_n:=\left(1-\frac{1}{n}\right)w+\frac{1}{n} w_0
	\end{equation*}
	the associated measures defined by
	\begin{equation*}
	\mu_n(B):= 
	\begin{cases} 
	\int_B \frac{\d(1+\Delta w_n)}{w_n}, 
	& \text{ if } \capa(B\cap \{w_n=0\})=0,\\ 
	+\infty, & \text{ else, } 
	\end{cases}
	\end{equation*}
	are Radon measures $\gamma$-converging to $\mu$.
	Thus, it remains to show the monotonicity of this sequence.
	Since $w_0>0$ by \cref{thm:torsion_function_properties},
	it holds $\mu_n(B) = \int_B \frac{\d(1+\Delta w_n)}{w_n}$ for all $n \in \mathbb{N}$ and for all Borel sets $B$.
	The representation
	\begin{align*}
	\mu_n(B)&=\int_B \frac{\d (1+\Delta w_n)}{w_n}=\int_{B} \frac{\d (1+\Delta ((1-1/n)w+1/n\,w_0))}{(1-1/n)w+1/n\,w_0}\\
	&=\int_B \frac{\d (1-1/n+(1-1/n)\Delta w)}{(1-1/n)(w+1/(n-1)\,w_0)}
	=\int_B \frac{\d(1+\Delta w)}{w+1/(n-1)\,w_0}
	\end{align*}
	shows that
	$\mu_n\leq \mu_{n+1}\leq \mu$ holds for all $n \in \mathbb{N}$.
\end{proof}

The following lemma shows 
that the image of $L_\mu$ is dense in $H_0^1(\Omega) \cap L_\mu^2(\Omega)$.

\begin{lemma}
	\label{lem:density_image_L_mu}
	Let $\mu \in \MM_0(\Omega)$ 
	and let $y \in H_0^1(\Omega) \cap L_\mu^2(\Omega)$. 
	Then there is a sequence 
	\begin{equation*}
		\{y_n\} \subset \{L_\mu(f): f\in H^{-1}(\Omega)\}
	\end{equation*}
	such that $y_n \to y$ in $H_0^1(\Omega) \cap L_\mu^2(\Omega)$.
\end{lemma}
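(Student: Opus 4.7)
The plan is to reformulate this as a density question in a Hilbert space and then use the standard orthogonal-complement characterization of density.

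Set $H := H_0^1(\Omega) \cap L_\mu^2(\Omega)$ endowed with the inner product
\begin{equation*}
    (v,w)_H := \int_\Omega \nabla v \cdot \nabla w \, \dx + \int_\Omega v\, w \, \d\mu.
\end{equation*}
The paper has already noted (citing \cite[Proposition~2.1]{ButtazzoDalMaso1991}) that $(H, (\cdot,\cdot)_H)$ is a Hilbert space, and that $L_\mu(f)$ is characterized as the unique element of $H$ satisfying
\begin{equation*}
    (L_\mu(f), z)_H = \dual{f}{z}_{H^{-1}(\Omega),H_0^1(\Omega)} \qquad \forall z \in H.
\end{equation*}
The claim $y_n \to y$ in $H_0^1(\Omega) \cap L_\mu^2(\Omega)$ means convergence in both component norms, which is equivalent to convergence in $\norm{\cdot}_H$. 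So it suffices to show that $V := \{L_\mu(f) : f \in H^{-1}(\Omega)\}$ is a dense subspace of $H$.

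To this end, I would take an element $y \in H$ of the orthogonal complement $V^\perp$ in $H$, i.e.\ $(L_\mu(f), y)_H = 0$ for every $f \in H^{-1}(\Omega)$. Using the defining relation of $L_\mu$ with test function $z := y \in H$, this gives
\begin{equation*}
    \dual{f}{y}_{H^{-1}(\Omega),H_0^1(\Omega)} = (L_\mu(f), y)_H = 0 \qquad \forall f \in H^{-1}(\Omega).
\end{equation*}
By non-degeneracy of the duality pairing between $H^{-1}(\Omega)$ and $H_0^1(\Omega)$, this forces $y = 0$ in $H_0^1(\Omega)$, hence $y = 0$ in $H$. Therefore $V^\perp = \{0\}$, and by the standard Hilbert-space argument $V$ is dense in $H$. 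Picking for each $y \in H$ a sequence $y_n \in V$ with $\norm{y_n - y}_H \to 0$ yields the required approximating sequence.

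There is essentially no obstacle here once the Hilbert-space viewpoint is adopted: the only thing to be careful about is recognizing that convergence in ``$H_0^1(\Omega) \cap L_\mu^2(\Omega)$'' precisely means convergence in the graph norm $\norm{\cdot}_H$, and that the defining variational equality of $L_\mu$ may be tested against arbitrary elements of $H$ (not just those of the form $L_\mu(g)$). Both facts are already in place in the excerpt.
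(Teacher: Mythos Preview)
Your argument is correct: once $H := H_0^1(\Omega) \cap L^2_\mu(\Omega)$ is identified as a Hilbert space and the variational identity $(L_\mu(f),z)_H = \dual{f}{z}$ is used with $z=y$, the orthogonal-complement characterization of density gives the result immediately.

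The paper takes a different, constructive route. It defines $y_n$ as the solution of the penalized problem
\begin{equation*}
    (y_n, v)_H = -n \int_\Omega (y_n - y)\, v \, \dx \qquad \forall v \in H,
\end{equation*}
so that $y_n = L_\mu\bigl(-n(y_n - y)\bigr)$ lies in the range of $L_\mu$, and then invokes \cite[Proposition~3.1]{DalMasoGarroni1994} to obtain $y_n \to y$ in $H$. This is essentially a Yosida-type approximation. The advantage of the paper's proof is that it produces an explicit approximating sequence, which could in principle be reused; the advantage of your proof is that it is entirely self-contained, avoids the external citation, and is shorter. For the purposes of this lemma, where only density and not an explicit sequence is ever used downstream, your argument is the cleaner one.
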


\begin{proof}
	For every $n \in \mathbb{N}$
	let $y_n \in H_0^1(\Omega) \cap L_\mu^2(\Omega)$
	be the solution of the problem
	\begin{equation*}
		\int_{\Omega}\nabla y_n\,\nabla v\,\d x
		+\int_{\Omega}y_n\,v\,\d\mu=
		-n\int_{\Omega} (y_n-y)\,v\,\d x 
		\quad \forall v \in H_0^1(\Omega) \cap L_\mu^2(\Omega).
	\end{equation*}
	We can write $y_n=L_\mu(-n(y_n-y))$, 
	thus $y_n \in \{L_\mu(f): f\in H^{-1}(\Omega)\}$.
	By \cite[Proposition~3.1]{DalMasoGarroni1994},
	it holds $y_n \to y$ in $H_0^1(\Omega) \cap L_\mu^2(\Omega)$
	and the conclusion follows.
\end{proof}

\begin{lemma}
	\label{lem:v=0_on_w=0}
	Let $\mu \in \MM_0(\Omega)$ 
	and assume that $v \in H_0^1(\Omega)\cap L_\mu^2(\Omega)$. 
	Then it holds 
	$v=0$ q.e.\ on $\{w_\mu=0\}$ 
	and $v \in H_0^1(\{w_\mu>0\})$.
\end{lemma}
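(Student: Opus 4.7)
The plan is to introduce the auxiliary capacitary measure $\tilde\mu := \mu + \infty_{\Omega\setminus\{w_\mu>0\}}$ (noting $\{w_\mu > 0\}$ is quasi-open since $w_\mu$ is quasi-continuous), prove $\mu = \tilde\mu$, and then exploit $L^2_\mu(\Omega) = L^2_{\tilde\mu}(\Omega)$ to force any $v \in L^2_\mu(\Omega)$ to vanish q.e.\ on $\{w_\mu = 0\}$. Since $v \in L^2_\mu(\Omega)$ by assumption, this will yield the desired conclusion.

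First I would verify $\tilde\mu \in \MM_0(\Omega)$: as a sum of two Borel measures vanishing on sets of zero capacity, $\tilde\mu$ inherits these properties, and the regularity condition from \cref{def:capacitary_measure} follows by splitting according to whether $\capa(B \cap \{w_\mu = 0\}) > 0$ (then $\tilde\mu(B) = +\infty$ and every quasi-open $O \supset_q B$ automatically has $\tilde\mu(O) = +\infty$) or $= 0$ (then one intersects approximating quasi-open supersets of $B$ with the quasi-open set $\{w_\mu > 0\}$ and invokes the regularity of $\mu$).

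The heart of the argument is to check $L_{\tilde\mu}(1) = w_\mu$. Since $w_\mu = 0$ q.e.\ on $\{w_\mu = 0\}$, we have $w_\mu \in H_0^1(\Omega) \cap L^2_{\tilde\mu}(\Omega)$ with $F_{\tilde\mu}(w_\mu) = F_\mu(w_\mu)$. Because $\mu \le \tilde\mu$ entails $F_\mu \le F_{\tilde\mu}$ pointwise on $L^2(\Omega)$, the variational characterization of $w_\mu = L_\mu(1)$ as the unique minimizer of $u \mapsto F_\mu(u) - 2\int_\Omega u \, \dx$ transfers verbatim to $F_{\tilde\mu}$, yielding $L_{\tilde\mu}(1) = w_\mu$. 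The uniqueness of the capacitary measure from its torsion function (stated after \cref{thm:compactness}) then gives $\mu = \tilde\mu$.

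For the conclusion, any $v \in L^2_\mu(\Omega) = L^2_{\tilde\mu}(\Omega)$ satisfies $\int_\Omega v^2 \, \d\infty_{\Omega\setminus\{w_\mu>0\}} \le \int_\Omega v^2 \, \d\tilde\mu < \infty$, which is only possible if $v = 0$ q.e.\ on $\{w_\mu = 0\}$ (otherwise $v^2 \ge 1/k$ on some quasi-Borel subset of $\{w_\mu = 0\}$ of positive capacity for some $k$, forcing the integral to be $+\infty$); the membership $v \in H_0^1(\{w_\mu > 0\})$ is then immediate from the definition of $H_0^1$ on quasi-open sets. The most delicate point will be the transfer of the variational characterization from $F_\mu$ to $F_{\tilde\mu}$, which hinges on the identity $\int_\Omega w_\mu^2 \, \d\infty_{\Omega\setminus\{w_\mu>0\}} = 0$ coming from $w_\mu$ vanishing on its own zero set.
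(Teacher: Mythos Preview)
Your proof is correct but follows a genuinely different route from the paper's. The paper's argument is shorter: it directly invokes \cite[Proposition~3.4]{DalMasoGarroni1994}, which asserts that $\mu(B) = +\infty$ for every Borel set $B$ with $\capa(B \cap \{w_\mu = 0\}) > 0$; from this one reads off immediately that any $v \in L^2_\mu(\Omega)$ must vanish q.e.\ on $\{w_\mu = 0\}$ (the paper actually passes through the range of $L_\mu$ and the density result \cref{lem:density_image_L_mu}, though this detour is not strictly needed once the cited proposition is in hand). Your approach instead \emph{reproves} that structural fact by establishing the measure identity $\mu = \mu + \infty_{\{w_\mu = 0\}}$ via a variational comparison of torsion functions together with the injectivity of $\mu \mapsto w_\mu$. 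Both arguments ultimately rest on the same external reference---Proposition~3.4 of Dal~Maso--Garroni contains both the ``$\mu$ is infinite where $w_\mu$ vanishes'' statement used by the paper and the ``$w_\mu$ determines $\mu$'' statement used by you---so neither is more elementary in terms of dependencies. What your route buys is transparency: the variational step $F_{\tilde\mu}(w_\mu) = F_\mu(w_\mu) \le F_\mu(\cdot) \le F_{\tilde\mu}(\cdot)$ makes the mechanism explicit, whereas the paper treats the key property as a black box. The paper's route buys brevity.
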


\begin{proof}
	By \cite[Proposition~3.4]{DalMasoGarroni1994}, 
	it holds $\mu(B)=+\infty$ for all Borel sets $B \subset \Omega$ with $\capa(B \cap \{w_\mu=0\})>0$. 
	Thus $v=0$ q.e.\ on $\{w_\mu=0\}$ for all $v$ in the image of $L_\mu$.
	By density of this set in $H_0^1(\Omega) \cap L_\mu^2(\Omega)$,
	see \cref{lem:density_image_L_mu},
	it follows $v=0$ q.e.\ on $\{w_\mu=0\}$ for all $v \in H_0^1(\Omega) \cap L_\mu^2(\Omega)$.
	
	It holds $w_\mu \geq 0$ on $\Omega$ by \cref{thm:torsion_function_properties},
	therefore,
	each $v \in H_0^1(\Omega) \cap L_\mu^2(\Omega)$ is in $H_0^1(\{w_\mu>0\})$.
\end{proof}

The next result characterizes the completion
of $H_0^1(\Omega) \cap L^2_\mu(\Omega)$
in
$H_0^1(\Omega)$.
\begin{lemma}
	\label{lem:completion_h01_l2}
	Let $\mu \in \MM_0(\Omega)$ be given.
	Then,
	\begin{equation*}
		\overline{ H_0^1(\Omega) \cap L^2_\mu(\Omega) }^{H_0^1(\Omega)}
		=
		H_0^1( \{w_\mu > 0\} ).
	\end{equation*}
	Moreover, for any $v \in H_0^1( \{w_\mu > 0\} )^+$,
	there exists a sequence $\{v_n\}_{n \in \N} \subset H_0^1(\Omega) \cap L^2_\mu(\Omega)$
	such that
	$0 \le v_n \le v$ q.e.\ on $\Omega$ for all $n \in \N$
	and
	$v_n \to v$ in $H_0^1(\Omega)$.
\end{lemma}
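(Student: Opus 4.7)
The plan is to establish the constructive second assertion first, since both inclusions in the first display follow easily from it together with Lemma~\ref{lem:v=0_on_w=0}. Indeed, the inclusion ``$\subset$'' is immediate: by Lemma~\ref{lem:v=0_on_w=0}, $H_0^1(\Omega) \cap L^2_\mu(\Omega)$ already sits inside $H_0^1(\{w_\mu > 0\})$, and the latter is a closed subspace of $H_0^1(\Omega)$ (as noted after Lemma~\ref{lem:pointwise_quasi_everywhere_convergence_of_a_subsequence}). For ``$\supset$'', an arbitrary $v \in H_0^1(\{w_\mu > 0\})$ decomposes as $v = v^+ - v^-$ with $v^\pm \in H_0^1(\{w_\mu > 0\})^+$, so approximating each part separately according to the second assertion suffices.

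For the main construction, I would set $\hat\Omega_n := \{w_\mu > 1/n\}$. Since $w_\mu$ is quasi-continuous, each $\hat\Omega_n$ is quasi-open, the sequence is increasing, and by Theorem~\ref{thm:torsion_function_properties} we have $\bigcup_n \hat\Omega_n =_q \{w_\mu > 0\}$. Now fix $v \in H_0^1(\{w_\mu > 0\})^+$ and first assume $v$ is bounded, say $v \le M$ q.e. Lemma~\ref{lem:quasi-covering} applied to this quasi-covering produces a sequence $v_n \in H_0^1(\hat\Omega_n)$ with $v_n \to v$ in $H_0^1(\Omega)$ and $\sup\abs{v_n} \le M$. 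Setting $\tilde v_n := (v_n \vee 0) \wedge v$ gives $0 \le \tilde v_n \le v$ q.e.\ on $\Omega$ by construction, and the continuity of the lattice operations on $H_0^1(\Omega)$ yields $\tilde v_n \to v$ in $H_0^1(\Omega)$. Moreover $\tilde v_n = 0$ q.e.\ on $\{w_\mu \le 1/n\}$ (since $v_n = 0$ there and $v \ge 0$), so on all of $\Omega$ the pointwise bound $\tilde v_n \le M \, n \, w_\mu$ holds. Because $w_\mu = L_\mu(1) \in L^2_\mu(\Omega)$, this gives $\int_\Omega \tilde v_n^2 \, \d\mu \le M^2 n^2 \int_\Omega w_\mu^2 \, \d\mu < \infty$, hence $\tilde v_n \in H_0^1(\Omega) \cap L^2_\mu(\Omega)$.

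For general $v \in H_0^1(\{w_\mu > 0\})^+$ I would apply the bounded case to the truncations $v^{(k)} := v \wedge k$, which satisfy $v^{(k)} \to v$ in $H_0^1(\Omega)$ and $v^{(k)} \in H_0^1(\{w_\mu > 0\})^+$. This yields sequences $\tilde v_n^{(k)} \in H_0^1(\Omega) \cap L^2_\mu(\Omega)$ with $0 \le \tilde v_n^{(k)} \le v^{(k)} \le v$ and $\tilde v_n^{(k)} \to v^{(k)}$ in $H_0^1(\Omega)$ as $n \to \infty$. A standard diagonal extraction then produces the required approximating sequence.

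The delicate step is securing that the approximants lie in $L^2_\mu(\Omega)$; the straightforward candidate $v \wedge (n w_\mu)$ fails because its gradient on the set $\{v > n w_\mu\}$ contains the uncontrolled term $n \nabla w_\mu$. The fix I rely on is decoupling ``approximation in $H_0^1$'' from ``$\mu$-integrability'': the covering lemma produces an $H_0^1$-approximation supported in $\hat\Omega_n$, and then the bound $v \le M$ together with $w_\mu \ge 1/n$ on $\hat\Omega_n$ gives the pointwise domination $\tilde v_n \le M n w_\mu$ ``for free'', without touching the gradient. The cost is that this route only works after truncating $v$ to be bounded, which is why the diagonal argument is needed at the end.
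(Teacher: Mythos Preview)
Your argument is correct and takes a genuinely different route from the paper's.

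The paper proves the first identity abstractly: it observes that $V := \overline{H_0^1(\Omega)\cap L^2_\mu(\Omega)}^{H_0^1(\Omega)}$ is a closed lattice ideal in $H_0^1(\Omega)$, invokes \textsc{Stollmann}'s characterization to write $V = H_0^1(\tilde\Omega)$ for some quasi-open $\tilde\Omega$, and then identifies $\tilde\Omega =_q \{w_\mu>0\}$ from $w_\mu\in V$ together with \cref{thm:torsion_function_properties}. The second assertion is then deduced from the first by applying the continuous map $w \mapsto \max\bigl(0,\min(w,v)\bigr)$ to any approximating sequence.

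Your approach is constructive and avoids the external lattice-ideal theorem entirely: you build the approximants directly via \cref{lem:quasi-covering} on the exhaustion $\hat\Omega_n=\{w_\mu>1/n\}$, and the key new idea is the pointwise domination $\tilde v_n \le Mn\,w_\mu$, which immediately yields $\tilde v_n\in L^2_\mu(\Omega)$ since $w_\mu\in L^2_\mu(\Omega)$. The truncation-plus-diagonal step handles unbounded $v$. This is more elementary and gives the explicit bounds $0\le v_n\le v$ as a byproduct rather than as a post-processing step. The cost is a slightly longer argument (two stages: bounded case, then diagonalization), whereas the paper's route is shorter once one is willing to quote Stollmann's result.
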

\begin{proof}
	We set
	$V := \overline{ H_0^1(\Omega) \cap L^2_\mu(\Omega) }^{H_0^1(\Omega)}$.
	The inclusion $V \subset H_0^1( \{w_\mu > 0\} )$
	is clear from \cref{lem:v=0_on_w=0}.

	Then, it can be checked that $V$
	is a closed lattice ideal in $H_0^1(\Omega)$,
	i.e., it is a closed subspace with the property
	that
	$v \in V$, $w \in H_0^1(\Omega)$ and $\abs{w} \le \abs{v}$
	imply $w \in V$.
	Hence,
	\cite{Stollmann1993} implies
	that $V = H_0^1(\tilde \Omega)$ for some quasi-open $\tilde\Omega \subset \Omega$.
	Thus,
	\begin{equation*}
		w_\mu
		\in
		V
		=
		H_0^1(\tilde \Omega)
		\subset
		H_0^1( \{w_\mu > 0\})
	\end{equation*}
	and
	together with \cref{thm:torsion_function_properties}
	we get
	$\tilde\Omega =_q \{w_\mu > 0\}$.
	This shows
	$\overline{ H_0^1(\Omega) \cap L^2_\mu(\Omega) }^{H_0^1(\Omega)} = V = H_0^1(\tilde\Omega) = H_0^1(\{w_\mu > 0\})$.

	The second assertion is clear since $w \mapsto \max\bigh(){0, \min(w, v)}$ is continuous on $H_0^1(\Omega)$.
\end{proof}
Note that a similar assertion
which, however,
uses the so-called singular set of the measure $\mu$
can be found in
\cite[Lemma~2.6]{ButtazzoDalMaso1991}.

The next lemma shows that the solution operators associated with quasi-open sets
form a (sequentially) closed set w.r.t.\ SOT.
\begin{lemma}
	\label{lem:SOT_convergence}
	Let $\Omega_n \subset \Omega$ be a sequence of quasi-open sets such that
	$L_{\Omega_n}$ converges in the SOT towards some $L \in \LL(H^{-1}(\Omega), H_0^1(\Omega))$.
	Then, the limit satisfies $L = L_{\hat\Omega}$
	for some quasi-open set $\hat\Omega \subset \Omega$.
\end{lemma}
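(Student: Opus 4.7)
The plan is to identify $\hat\Omega$ as the positivity set $\{w_\mu > 0\}$ of the torsion function of a suitable $\gamma$-limit $\mu$, and then exploit the difference between SOT and WOT convergence to show that $\mu$ concentrates outside this set, so that $L_\mu$ coincides with the classical Dirichlet solution operator on $\hat\Omega$.

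First I would extract a subsequence. The measures $\mu_n := \infty_{\Omega \setminus \Omega_n}$ lie in $\MM_0(\Omega)$, so by the compactness result \cref{thm:compactness} a subsequence satisfies $\mu_{n_k} \togamma \mu$ for some $\mu \in \MM_0(\Omega)$. By the definition of $\gamma$-convergence, $L_{\mu_{n_k}} = L_{\Omega_{n_k}} \toWOT L_\mu$. Since SOT convergence implies WOT convergence and WOT limits are unique, $L = L_\mu$. Hence, setting $\hat\Omega := \{w_\mu > 0\}$, which is quasi-open as a strict superlevel set of a quasi-continuous function, my goal becomes to prove $L_\mu = L_{\hat\Omega}$.

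The central step is to show that $\mu$ does not ``see'' the values of $L_\mu(h)$, that is, $L_\mu(h) = 0$ $\mu$-a.e.\ for every $h \in H^{-1}(\Omega)$. Here I would use the strong convergence (not just weak): from $L_{\Omega_n}(h) \to L_\mu(h)$ in $H_0^1(\Omega)$ we get $\|L_{\Omega_n}(h)\|_{H_0^1(\Omega)}^2 \to \|L_\mu(h)\|_{H_0^1(\Omega)}^2$. On the other hand, testing the defining equation for $L_{\Omega_n}(h)$ with itself yields the energy identity $\|L_{\Omega_n}(h)\|_{H_0^1(\Omega)}^2 = \dual{h}{L_{\Omega_n}(h)}$, and the right-hand side converges to $\dual{h}{L_\mu(h)}$. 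Combining this limit with the energy identity for $L_\mu$ arising from \eqref{weak_formulation_relaxed_BVP},
\begin{equation*}
  \|L_\mu(h)\|_{H_0^1(\Omega)}^2 + \int_\Omega L_\mu(h)^2 \, \d\mu = \dual{h}{L_\mu(h)},
\end{equation*}
forces $\int_\Omega L_\mu(h)^2 \, \d\mu = 0$, hence $L_\mu(h) = 0$ $\mu$-a.e. This is the step I expect to carry the whole argument; it is precisely where the hypothesis that convergence is in SOT rather than only in WOT is used.

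Finally, I would identify $L_\mu$ with $L_{\hat\Omega}$. Because $L_\mu(h) \in H_0^1(\Omega) \cap L^2_\mu(\Omega)$, \cref{lem:v=0_on_w=0} gives $L_\mu(h) \in H_0^1(\hat\Omega)$. For a test function $z \in H_0^1(\hat\Omega)$, by \cref{lem:completion_h01_l2} choose an approximating sequence $\{z_n\} \subset H_0^1(\Omega) \cap L^2_\mu(\Omega)$ with $z_n \to z$ in $H_0^1(\Omega)$. Testing the equation defining $L_\mu(h)$ against $z_n$ gives
\begin{equation*}
  \int_\Omega \nabla L_\mu(h) \, \nabla z_n \, \dx + \int_\Omega L_\mu(h) \, z_n \, \d\mu = \dual{h}{z_n},
\end{equation*}
and the $\mu$-integral vanishes because $L_\mu(h) = 0$ $\mu$-a.e. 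Passing $n \to \infty$ yields $\int_\Omega \nabla L_\mu(h) \, \nabla z \, \dx = \dual{h}{z}$ for every $z \in H_0^1(\hat\Omega)$, which is exactly the defining variational equation of $L_{\hat\Omega}(h)$. Hence $L = L_\mu = L_{\hat\Omega}$, completing the proof.
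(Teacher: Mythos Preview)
Your proof is correct and follows essentially the same strategy as the paper: identify $L = L_\mu$ for some $\mu \in \MM_0(\Omega)$, use the energy identity together with the \emph{strong} convergence $L_{\Omega_n}(h) \to L_\mu(h)$ to force $\int_\Omega L_\mu(h)^2\,\d\mu = 0$, and then conclude $L_\mu = L_{\hat\Omega}$ with $\hat\Omega = \{w_\mu > 0\}$. The only cosmetic differences are that the paper invokes completeness of $\MM_0(\Omega)$ (\cref{lem:completeness}) rather than compactness to obtain $\mu$, and that in the last step the paper verifies $w_\mu = L_{\hat\Omega}(1)$ only for the torsion function and then appeals to the fact that a capacitary measure is determined by its torsion function, whereas you carry out the density argument via \cref{lem:completion_h01_l2} directly for every $h$; both routes are valid and equally short.
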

\begin{proof}
	From \cref{lem:completeness}
	we know that $L = L_\mu$ for some $\mu \in \MM_0(\Omega)$.
	For given $f \in H^{-1}(\Omega)$,
	we set $v_n := L_{\Omega_n} f$.
	Then, $v_n \to v := L_\mu f$ in $H_0^1(\Omega)$
	and this yields
	\begin{equation*}
		\int_\Omega v^2 \, \d\mu
		=
		\int_\Omega -\abs{\nabla v}^2 + f \, v \, \d x
		=
		\lim_{n \to \infty}
		\int_\Omega -\abs{\nabla v_n}^2 + f \, v_n \, \d x
		=
		0.
	\end{equation*}
	Hence, $\int_\Omega v^2 \, \d\mu = 0$ for all $v$ in the range of $L_\mu$.

	In order to check $L_\mu = L_{\hat\Omega}$
	for some quasi-open set $\hat\Omega \subset \Omega$,
	we use the torsion function $w = L_\mu(1)$
	and set
	$\hat\Omega :=_q \{w > 0\}$.
	From $\int_\Omega w^2 \, \d\mu = 0$
	and $v \in H_0^1(\hat\Omega)$ for $v \in H_0^1(\Omega) \cap L^2_\mu(\Omega)$ (see \cref{lem:v=0_on_w=0}), it follows that
	$w = L_{\hat\Omega}(1)$.
	Thus, $L_{\hat\Omega} = L_\mu$ by
	\cite[Proposition~3.4]{DalMasoGarroni1994}.
\end{proof}
Note that we even have the following converse of \cref{lem:SOT_convergence}.
If $L_{\Omega_n} \toWOT L_\Omega$,
then we already get $L_{\Omega_n} \toSOT L_\Omega$,
see \cite[Proposition~5.8.6]{AttouchButtazzoMichaille2014}.
That is,
the $\gamma$-limit of the sequence of quasi-open sets $\{\Omega_n\}$
is again a quasi-open set
if and only if the solution operators converge in the strong operator topology.

Let us also mention that the $\gamma$-convergence 
of a sequence of quasi-open sets $\{\Omega_n\}$ to a quasi-open set $\tilde{\Omega}$,
i.e., the convergence $L_{\Omega_n} \toSOT L_{\tilde{\Omega}}$,
is equivalent to the convergence of the spaces $\{H_0^1(\Omega_n)\}$ to $H_0^1(\tilde{\Omega})$
in the sense of Mosco,
see \cite[Prop.~4.53, Remark~4.5.4]{BucurButtazzo2005}.
This tool is also used in the derivation of a generalized gradient in \cite{RaulsUlbrich2018}.

As a last result in this section,
we are going to study the convergence of
a sum of two $\gamma$-convergent sequences.
To this end, we need an auxiliary lemma.
\begin{lemma}
	\label{lem:h01_weak_convergence_and_norms}
	Let $\{u_n\}, \{v_n\} \subset H_0^1(\Omega)$
	be sequences with $u_n \to u$ in $H_0^1(\Omega)$ and $v_n \weakly u$ in $H_0^1(\Omega)$
	for some $u \in H_0^1(\Omega)$.
	Then, $w_n := \min(u_n, v_n)$ satisfies
	$w_n \weakly u$ in $H_0^1(\Omega)$
	and
	\begin{equation*}
		\limsup_{n \to \infty}
		\bigh(){
		\norm{w_n}_{H_0^1(\Omega)}^2
		-
		\norm{v_n}_{H_0^1(\Omega)}^2
		}
		\le
		0
		.
	\end{equation*}
\end{lemma}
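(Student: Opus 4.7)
The plan is to decompose $w_n = v_n - \phi_n$, where $\phi_n := (v_n - u_n)^+$, and to reduce both assertions to properties of the single sequence $\{\phi_n\}$. Since $z \mapsto z^+$ is continuous on $H_0^1(\Omega)$, the sequence $\{\phi_n\}$ is bounded. The assumptions $u_n \to u$ in $H_0^1(\Omega)$ and $v_n \weakly u$ in $H_0^1(\Omega)$ yield $v_n - u_n \weakly 0$ in $H_0^1(\Omega)$, hence $v_n - u_n \to 0$ in $L^2(\Omega)$ by the compact embedding, and therefore $\phi_n \to 0$ in $L^2(\Omega)$. Since any weakly convergent subsequence of $\{\phi_n\}$ in $H_0^1(\Omega)$ must then have weak limit $0$, the usual subsequence argument gives $\phi_n \weakly 0$ in $H_0^1(\Omega)$. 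In particular, $w_n = v_n - \phi_n \weakly u$, proving the first claim.

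For the norm estimate, the crucial identity I would exploit is
\begin{equation*}
  \int_\Omega \nabla(v_n - u_n) \cdot \nabla \phi_n \, \dx
  = \norm{\phi_n}_{H_0^1(\Omega)}^2,
\end{equation*}
which follows at once from the Stampacchia-type formula $\nabla \phi_n = \chi_{\{v_n > u_n\}} \nabla(v_n - u_n)$. Expanding $\norm{w_n}_{H_0^1(\Omega)}^2 = \norm{v_n - \phi_n}_{H_0^1(\Omega)}^2$ and using this identity to rewrite $\int_\Omega \nabla v_n \cdot \nabla \phi_n \, \dx = \int_\Omega \nabla u_n \cdot \nabla \phi_n \, \dx + \norm{\phi_n}_{H_0^1(\Omega)}^2$ leads, after cancellation, to
\begin{equation*}
  \norm{w_n}_{H_0^1(\Omega)}^2 - \norm{v_n}_{H_0^1(\Omega)}^2
  = - \norm{\phi_n}_{H_0^1(\Omega)}^2 - 2 \int_\Omega \nabla u_n \cdot \nabla \phi_n \, \dx.
\end{equation*}

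To finish, I would pass to the limit on the right. Since $u_n \to u$ strongly and $\phi_n \weakly 0$ in $H_0^1(\Omega)$, the strong–weak pairing gives $\int_\Omega \nabla u_n \cdot \nabla \phi_n \, \dx \to 0$. Discarding the nonpositive term $-\norm{\phi_n}_{H_0^1(\Omega)}^2$ then yields the desired $\limsup \le 0$. The only real subtlety lies in spotting the central identity above: it sidesteps the otherwise problematic pairing $\int_\Omega \nabla v_n \cdot \nabla \phi_n \, \dx$ of two merely weakly convergent sequences, and once the cancellation has been exposed the remaining steps are routine.
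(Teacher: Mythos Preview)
Your proof is correct and is essentially the same as the paper's: with $\phi_n = (v_n - u_n)^+ = \max(0, v_n - u_n)$, both arguments arrive at the identity $\norm{w_n}_{H_0^1(\Omega)}^2 - \norm{v_n}_{H_0^1(\Omega)}^2 = -\norm{\phi_n}_{H_0^1(\Omega)}^2 + 2\,(w_n - v_n, u_n)_{H_0^1(\Omega)}$ and then pass to the limit via $w_n - v_n = -\phi_n \weakly 0$ against the strongly convergent $u_n$. The only cosmetic differences are that the paper obtains the identity by adding and subtracting $u_n$ inside the norms (using $\norm{(v_n-u_n)^+}^2 + \norm{(v_n-u_n)^-}^2 = \norm{v_n-u_n}^2$), whereas you expand $\norm{v_n - \phi_n}^2$ and invoke the Stampacchia formula directly, and the paper cites weak sequential continuity of $\min$ for the first claim where you argue via $\phi_n \weakly 0$.
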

\begin{proof}
	The weak convergence of $w_n$ follows from the weak sequential continuity of $\min(\cdot, \cdot)$
	in $H_0^1(\Omega)$.
	To obtain the desired inequality,
	we check
	\begin{align*}
		\norm{w_n}_{H_0^1(\Omega)}^2 - \norm{v_n}_{H_0^1(\Omega)}^2
		&=
		\norm{w_n - u_n}_{H_0^1(\Omega)}^2 - \norm{v_n - u_n}_{H_0^1(\Omega)}^2  + 2 \, \scalarprod{w_n - v_n}{u_n}_{H_0^1(\Omega)}
		\\
		&=
		-\norm{\max(0, v_n - u_n)}_{H_0^1(\Omega)}^2 + 2 \, \scalarprod{w_n - v_n}{u_n}_{H_0^1(\Omega)}
		\\
		&\le
		2 \, \scalarprod{w_n - v_n}{u_n}_{H_0^1(\Omega)}.
	\end{align*}
	Now, the claim follows from $w_n - v_n \weakly 0$ and $u_n \to u$ in $H_0^1(\Omega)$.
\end{proof}

\begin{theorem}
	\label{thm:gamma_limit_of_sum_of_measures}
	Let $\{\mu_n\}$ be a sequence in $\MM_0(\Omega)$
	such that
	$\mu_n \togamma \mu$
	and
	let $\{C_n\}$ be a sequence of quasi-closed subsets of $\Omega$
	such that
	$\infty_{C_n} \togamma \infty_C$
	for some quasi-closed set $C \subset \Omega$.
	Then,
	$\mu_n + \infty_{C_n} \togamma \mu + \infty_C$.
\end{theorem}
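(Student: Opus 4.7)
The plan is to translate the claim into the $\Gamma$-convergence of the associated quadratic functionals and to verify the two defining conditions. By the equivalence of $\gamma$-convergence and $\Gamma$-convergence established in \cref{lem:equivalencies_gamma_convergence}, it suffices to show $F_{\mu_n + \infty_{C_n}} \toGamma F_{\mu + \infty_C}$ in $L^2(\Omega)$. The basic observation, valid for every $\nu \in \MM_0(\Omega)$ and every quasi-closed set $D \subset \Omega$, is that $v \in L^2_{\infty_D}(\Omega)$ if and only if $v = 0$ q.e.\ on $D$; consequently $F_{\nu + \infty_D}(v)$ either equals $F_\nu(v)$ (when $v = 0$ q.e.\ on $D$) or is $+\infty$.

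For the $\liminf$-inequality, take $u_n \to u$ in $L^2(\Omega)$ with $\liminf F_{\mu_n + \infty_{C_n}}(u_n) < \infty$. Along a realizing subsequence, $u_n \in H_0^1(\Omega) \cap L^2_{\mu_n}(\Omega)$ with $u_n = 0$ q.e.\ on $C_n$, so $F_{\mu_n + \infty_{C_n}}(u_n) = F_{\mu_n}(u_n)$ is bounded. The $\Gamma$-$\liminf$ for $F_{\mu_n} \toGamma F_\mu$ yields $F_\mu(u) \le \liminf F_{\mu_n}(u_n)$, while the $\Gamma$-$\liminf$ for $F_{\infty_{C_n}} \toGamma F_{\infty_C}$ applied to the bounded sequence $F_{\infty_{C_n}}(u_n) = \norm{u_n}_{H_0^1(\Omega)}^2$ forces $F_{\infty_C}(u) < \infty$, i.e.\ $u = 0$ q.e.\ on $C$. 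Combining both, $F_{\mu + \infty_C}(u) = F_\mu(u) \le \liminf F_{\mu_n + \infty_{C_n}}(u_n)$.

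The core of the argument is the construction of a recovery sequence. Let $u \in H_0^1(\Omega) \cap L^2_\mu(\Omega)$ with $u = 0$ q.e.\ on $C$ (otherwise the constant sequence $u_n \equiv u$ works). Choose recovery sequences $\{v_n\}$ for $F_{\mu_n} \toGamma F_\mu$ and $\{w_n\} \subset H_0^1(\Omega \setminus C_n)$ for $F_{\infty_{C_n}} \toGamma F_{\infty_C}$. The boundedness of $F_{\mu_n}(v_n)$ forces $v_n \weakly u$ in $H_0^1(\Omega)$, while $\norm{w_n}_{H_0^1(\Omega)}^2 \to \norm{u}_{H_0^1(\Omega)}^2$ combined with $w_n \weakly u$ upgrades $w_n \to u$ strongly in $H_0^1(\Omega)$, and therefore also $|w_n| \to |u|$ strongly. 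I then define the median truncation
\begin{equation*}
	u_n := \max\bigl( -|w_n|,\ \min(v_n, |w_n|) \bigr),
\end{equation*}
so that pointwise $|u_n| \le |w_n|$ and $|u_n| \le |v_n|$. The first bound gives $u_n \in H_0^1(\Omega \setminus C_n)$; the second gives $\int_\Omega u_n^2 \, \d\mu_n \le \int_\Omega v_n^2 \, \d\mu_n$. Continuity of $\min$ and $\max$ on $L^2(\Omega)$ yields $u_n \to u$ in $L^2(\Omega)$.

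The decisive gradient estimate applies \cref{lem:h01_weak_convergence_and_norms} twice. First, with strong limit $|u|$ and weak limit $u$ (and $|u| \ge u$), it gives $t_n := \min(v_n, |w_n|) \weakly u$ and $\limsup \bigl( \norm{t_n}_{H_0^1(\Omega)}^2 - \norm{v_n}_{H_0^1(\Omega)}^2 \bigr) \le 0$. Second, applied to $\min(|w_n|, -t_n)$ (strong limit $|u|$, weak limit $-u$, with $|u| \ge -u$) and using the identity $u_n = -\min(|w_n|, -t_n)$, it yields $u_n \weakly u$ and $\limsup \bigl( \norm{u_n}_{H_0^1(\Omega)}^2 - \norm{t_n}_{H_0^1(\Omega)}^2 \bigr) \le 0$. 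Chaining these two bounds gives $\limsup \bigl( \norm{u_n}_{H_0^1(\Omega)}^2 - \norm{v_n}_{H_0^1(\Omega)}^2 \bigr) \le 0$, hence
\begin{equation*}
	\limsup_{n \to \infty} F_{\mu_n + \infty_{C_n}}(u_n)
	\le \lim_{n \to \infty} F_{\mu_n}(v_n) + \limsup_{n \to \infty} \bigl( \norm{u_n}_{H_0^1(\Omega)}^2 - \norm{v_n}_{H_0^1(\Omega)}^2 \bigr)
	\le F_\mu(u) = F_{\mu + \infty_C}(u),
\end{equation*}
which together with the $\liminf$-inequality completes the recovery construction. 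The main obstacle is precisely this combined gradient bound: the two individual recovery sequences $\{v_n\}$ and $\{w_n\}$ live in incompatible constrained spaces, and only the median truncation together with the sharp statement of \cref{lem:h01_weak_convergence_and_norms} allows one to avoid a naive and insufficient $\norm{v_n}_{H_0^1(\Omega)}^2 + \norm{w_n}_{H_0^1(\Omega)}^2$-type estimate.
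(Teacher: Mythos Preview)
Your proof is correct and follows the same overall strategy as the paper: translate to $\Gamma$-convergence of the functionals $F_{\mu_n+\infty_{C_n}}$, handle the $\liminf$ inequality exactly as you do, and for the recovery sequence combine a $\mu_n$-recovery sequence with an $\infty_{C_n}$-recovery sequence via a truncation, controlling the gradient term through \cref{lem:h01_weak_convergence_and_norms}. The paper's recovery step differs only in how signed $u$ is treated: it first reduces to $u\ge 0$ (applying the argument separately to $u^+$ and $u^-$), replaces the recovery sequences by their positive parts, and then uses the single one-sided truncation $\min(u_n,v_n)$, so that \cref{lem:h01_weak_convergence_and_norms} applies once in its stated form with identical limits. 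Your median truncation $\max(-|w_n|,\min(v_n,|w_n|))$ handles both signs at once and is arguably cleaner, at the price of invoking the lemma twice.

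One point deserves a sentence of justification. In both of your applications the strongly convergent sequence tends to $|u|$ while the weakly convergent one tends to $u$ (respectively $-u$), so the two limits differ; \cref{lem:h01_weak_convergence_and_norms} as stated assumes they coincide. The conclusion you need still holds because the proof of the lemma only uses that (i) $\min$ is weakly sequentially continuous on $H_0^1(\Omega)$, giving $t_n\weakly\min(u,|u|)=u$ and $-u_n\weakly\min(|u|,-u)=-u$, and (ii) the difference between the $\min$ and the weakly convergent argument tends weakly to zero, which here follows from $|u|\ge u$ and $|u|\ge -u$. Spelling this out (or restating the lemma with possibly different limits $a\ge b$) would make your argument self-contained.
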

\begin{proof}
	We use the characterization of $\gamma$-convergence via
	the $\Gamma$-convergence of the functionals $F_{\mu_n + \infty_{C_n}}$.
	Therefore, we have to verify \eqref{eq:Gamma_convergence}.
	Let $u \in L^2(\Omega)$ be given
	and consider an arbitrary sequence $\{u_n\} \subset L^2(\Omega)$
	with $u_n \to u$ in $L^2(\Omega)$.
	We have to show
	\begin{equation*}
		F_{\mu + \infty_C}(u)
		\le
		\liminf_{n \to \infty}
		F_{\mu_n + \infty_{C_n}}(u_n).
	\end{equation*}
	If the limes inferior is $+\infty$, there is nothing to show.
	Otherwise, we select a subsequence of $\{u_n\}$
	(without relabeling),
	such that the limes inferior is actually a limit
	and such that $F_{\mu_n + \infty_{C_n}}(u_n) < +\infty$ for all $n$.
	This implies
	$u_n \in H_0^1(\Omega)$
	as well as
	$\int_\Omega u_n^2 \, \d\infty_{C_n} < +\infty$,
	and these properties
	yield
	$u_n \in H_0^1(\Omega \setminus C_n)$.
	Consequently, we have
	\begin{equation*}
		F_{\infty_C}(u)
		\le
		\liminf_{n \to \infty}
		F_{\infty_{C_n}}(u_n)
		\le
		\liminf_{n \to \infty}
		F_{\mu_n + \infty_{C_n}}(u_n)
		< +\infty.
	\end{equation*}
	Thus, $u \in H_0^1(\Omega \setminus C)$
	and $\int_\Omega u^2 \, \d\infty_C = 0$.
	Now, the desired inequality follows by
	\begin{equation*}
		F_{\mu + \infty_C}(u)
		=
		F_{\mu}(u)
		\le
		\liminf_{n \to \infty}
		F_{\mu_n}(u_n)
		=
		\liminf_{n \to \infty}
		F_{\mu_n + \infty_{C_n}}(u_n),
	\end{equation*}
	where we have used $F_{\mu_n} \toGamma F_\mu$.

	Further, we have to prove the existence of a sequence $\{w_n\} \subset L^2(\Omega)$
	with $w_n \to u$ in $L^2(\Omega)$
	and
	\begin{equation*}
		F_{\mu + \infty_C}(u)
		=
		\lim_{n \to \infty}
		F_{\mu_n + \infty_{C_n}}(w_n).
	\end{equation*}
	It is enough to consider the case $u \ge 0$,
	otherwise apply the following arguments to $u^+$ and $u^-$.
	If $F_{\mu + \infty_C}(u)=\infty$, there is nothing to show.
	Otherwise, we have $u \in H_0^1(\Omega \setminus C)$.
	From $F_{\mu_n} \toGamma F_\mu$ and $F_{\infty_{C_n}} \toGamma F_{\infty_C}$,
	we find sequences $\{v_n\}, \{u_n\} \subset L^2(\Omega)$
	with
	\begin{align*}
		v_n \to u \text{ in } L^2(\Omega) &\quad\text{and}\quad F_{\mu_n}(v_n) \to F_\mu(u), \\
		u_n \to u \text{ in } L^2(\Omega) &\quad\text{and}\quad F_{\infty_{C_n}}(u_n) \to F_{\infty_C}(u).
	\end{align*}
	W.l.o.g., we can assume $v_n, u_n \ge 0$
	(otherwise, replace $v_n$ by $\max(v_n,0)$ and $u_n$ by $\max(u_n,0)$).
	We easily infer $v_n \weakly u$ in $H_0^1(\Omega)$ and $u_n \to u$ in $H_0^1(\Omega)$.
	We define $w_n = \min(u_n, v_n)$ and already get $w_n \to u$ in $L^2(\Omega)$.
	To obtain the convergence of the function values,
	we use $w_n = 0$ q.e.\ on $C_n$
	to obtain
	\begin{align*}
		F_{\mu_n + \infty_{C_n}}(w_n)
		&=
		F_{\mu_n}(w_n)
		= \int_\Omega \abs{\nabla w_n}^2 \, \dx + \int_\Omega w_n^2 \, \d\mu_n
		\\
		&\le \int_\Omega \abs{\nabla w_n}^2 \, \dx + \int_\Omega v_n^2 \, \d\mu_n
		= F_{\mu_n}(v_n) +
		\bigh(){
		\norm{w_n}_{H_0^1(\Omega)}^2
		-
		\norm{v_n}_{H_0^1(\Omega)}^2
		}
		.
	\end{align*}
	Now, by using \cref{lem:h01_weak_convergence_and_norms} and $u \in H_0^1(\Omega \setminus C)$
	we obtain
	\begin{align*}
		F_{\mu + \infty_{C}}(u)
		&\le
		\liminf_{n \to \infty}
		F_{\mu_n + \infty_{C_n}}(w_n)
		\\
		&\le
		\limsup_{n \to \infty}
		F_{\mu_n + \infty_{C_n}}(w_n)
		\le
		\limsup_{n \to \infty}
		F_{\mu_n}(v_n)
		=
		F_\mu(u)
		=
		F_{\mu + \infty_C}(u).
	\end{align*}
	Thus,
	$F_{\mu + \infty_C}(u) = \lim_{n \to \infty} F_{\mu_n + \infty_{C_n}}(w_n)$.
	This finishes the proof of
	$F_{\mu_n + \infty_{C_n}}(w_n) \toGamma F_{\mu + \infty_C}(u)$.
\end{proof}
%

\section{Generalized derivatives involving the SOT}
\label{sec:generalized_derivatives_SOT}
In this section, we are going to characterize
the generalized derivatives of the obstacle problem
which involve the SOT.

Therefore,
as a technique,
we frequently use the argument that
if $\hat\Omega \subset \Omega$ is quasi-open
and if $v \in H_0^1(\hat{\Omega})$,
then this implies $v=L_{\hat{\Omega}}(-\Delta v)$.

As a first result, we give an upper estimate for
$\pBws S(u)$.

\begin{lemma}
	\label{lem:upper_estimate_pBws}
	Let us assume that $L \in \pBws S(u)$.
	Then, there exists a quasi-open set $\hat\Omega \subset \Omega$ with
	$\capa(\hat\Omega \cap A_s(u)) = 0$
	and $L = L_{\hat\Omega}$.
\end{lemma}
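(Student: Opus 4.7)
The plan is to separate the two assertions. The existence of a quasi-open $\hat\Omega$ with $L = L_{\hat\Omega}$ follows essentially for free from the SOT-closedness result; the real content is to establish the capacity condition $\capa(\hat\Omega \cap A_s(u)) = 0$. To begin, since each $u_n$ is a point of G\^ateaux differentiability, \cref{lem:gateaux_S} gives $A(u_n) =_q A_s(u_n)$ and $S'(u_n) = L_{I(u_n)}$ for $I(u_n) = \Omega \setminus A(u_n)$. The assumption $S'(u_n) \toSOT L$ combined with \cref{lem:SOT_convergence} immediately yields a quasi-open set $\hat\Omega \subset \Omega$ with $L = L_{\hat\Omega}$, using neither weak convergence hypothesis.

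For the capacity condition, I would work with the torsion functions $w_n := L_{I(u_n)}(1)$ and $w := L_{\hat\Omega}(1)$. By \cref{thm:torsion_function_properties} one has $\hat\Omega =_q \{w > 0\}$, so the claim reduces to showing $w = 0$ q.e.\ on $A_s(u)$. Since $w \ge 0$ and $A_s(u) =_q \fsupp(\xi)$ for the multiplier $\xi := -\Delta S(u) - u \in H^{-1}(\Omega)^+$ (see \eqref{characterization_strictly_active_set}), part (iii) of \cref{lem:fine_support} turns this into the purely functional-analytic identity $\dual{\xi}{w} = 0$.

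To obtain this identity, I would pass to the limit in the orthogonality relation $\dual{\xi_n}{w_n} = 0$ with $\xi_n := -\Delta S(u_n) - u_n \in H^{-1}(\Omega)^+$. This relation holds for every $n$ because $w_n \in H_0^1(I(u_n))$ vanishes q.e.\ on $A(u_n) = A_s(u_n) =_q \fsupp(\xi_n)$, so part (iii) of \cref{lem:fine_support} applies. Applying the SOT convergence of $L_{I(u_n)}$ to the fixed element $1 \in H^{-1}(\Omega)$ gives $w_n \to w$ strongly in $H_0^1(\Omega)$, while the two weak convergences $u_n \weakly u$ in $H^{-1}(\Omega)$ and $S(u_n) \weakly S(u)$ in $H_0^1(\Omega)$ combine to give $\xi_n \weakly \xi$ in $H^{-1}(\Omega)$. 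The duality pairing of a strongly convergent sequence with a weakly convergent sequence passes to the limit, yielding $\dual{\xi}{w} = 0$. This weak--strong limit-passage is the main (and essentially only) non-trivial step, and it is the unique place where the $\pBws$-specific hypothesis $S(u_n) \weakly S(u)$ is actually used; without it, one would only know $\xi_n \weakly \xi$ up to a subsequence after boundedness arguments, and the whole construction would collapse.
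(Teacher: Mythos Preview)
Your proof is correct and follows essentially the same approach as the paper: obtain $L = L_{\hat\Omega}$ from \cref{lem:SOT_convergence}, then show $\dual{\xi}{w} = 0$ by passing to the limit in $\dual{\xi_n}{w_n} = 0$ via the weak--strong pairing $\xi_n \weakly \xi$, $w_n \to w$. The only cosmetic difference is that the paper uses $v_n := L_{I(u_n)}(-\Delta v)$ for a generic $v$ with $\{v>0\} =_q \hat\Omega$ (and must therefore insert absolute values, writing $\dual{\xi_n}{\abs{v_n}} = 0$), whereas your choice of the torsion functions $w_n = L_{I(u_n)}(1) \ge 0$ avoids this.
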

\begin{proof}
	By definition, there is a sequence $\{u_n\} \subset D_S$
	such that $u_n \weakly u$ in $H^{-1}(\Omega)$,
	$S(u_n) \weakly S(u)$ in $H_0^1(\Omega)$
	and $S'(u_n) \toSOT L$.
	By the characterization of differentiability points of $S$,
	we have $S'(u_n) = L_{I(u_n)}$.
	From \cref{lem:SOT_convergence}, we already
	know that $L = L_{\hat\Omega}$ for some quasi-open set $\hat\Omega \subset \Omega$.
	It remains to check
	$\capa(\hat\Omega \cap A_s(u)) = 0$.

	From \cref{thm:torsion_function_properties} we infer the existence of $v \in H_0^1(\Omega)^+$
	with $\{v > 0\} =_q \hat\Omega$.
	In particular, $v \in H_0^1(\hat\Omega)$
	and this yields
	that $v = L_{\hat\Omega}(-\Delta v)$
	is the strong limit of 
	$v_n := S'(u_n)(-\Delta v)$.
	By the properties of $S'(u_n)$, we have
	$v_n = 0$ q.e.\ on $A_s(u_n)$.
	Thus,
	\begin{equation*}
		\dual{\xi_n}{\abs{v_n}} = 0,
	\end{equation*}
	where $\xi_n := -\Delta S(u_n) - u_n$.
	From $\xi_n \weakly \xi := -\Delta S(u) - u$ we infer
	\begin{equation*}
		\dual{\xi}{\abs{v}} = 0.
	\end{equation*}
	Thus, $v = 0$ q.e.\ on $A_s(u)$.
	Hence,
	$\capa( \hat\Omega \cap A_s(u) ) = \capa( \{v \ne 0\} \cap A_s(u) ) = 0$.
\end{proof}

Before we can give a precise characterization
of $\pBws S(u)$ and $\pBss S(u)$,
we need an auxiliary lemma.

\begin{lemma}
	\label{lem:approximation_on_function_on_inactive_set}
	Let a sequence $u_n \to u$ in $H^{-1}(\Omega)$ be given.
	Then, for every $v \in H_0^1(I(u))$ with $0 \le v \le 1$, there exists a sequence $\{v_n\}$
	with $v_n \in H_0^1(I(u_n))$ and $v_n \to v$ in $H_0^1(\Omega)$.
\end{lemma}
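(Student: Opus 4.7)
My plan is to construct $v_n$ by truncating $v$ against a nonnegative function that vanishes quasi-everywhere on $A(u_n)$, and then extract a diagonal. Set $y_n := S(u_n)$ and $y := S(u)$; the global Lipschitz continuity of $S$ yields $y_n \to y$ in $H_0^1(\Omega)$. By \cref{lem:pointwise_quasi_everywhere_convergence_of_a_subsequence}, every subsequence of $\{y_n\}$ admits a further subsequence converging pointwise q.e.\ to $y$, which will let me apply a Urysohn-type principle to lift the final convergence statement from an arbitrary subsequence to the full sequence. The key observation is that $y_n \ge \psi$ q.e., so the nonnegative Borel function $(y_n - \psi)_+$ vanishes q.e.\ precisely on $A(u_n)$ and, on $I(u) = \{y > \psi\}$, converges pointwise q.e.\ along subsequences to the strictly positive limit $y - \psi$.

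For each integer $k \ge 1$ and each $n$, I would define
$$
v_n^{(k)} := \min\bigl(v,\, k\,(y_n - \psi)_+\bigr),
$$
which by construction satisfies $v_n^{(k)} = 0$ q.e.\ on $A(u_n)$ and $0 \le v_n^{(k)} \le v$; modulo the Sobolev regularity, this forces $v_n^{(k)} \in H_0^1(I(u_n))$. For fixed $k$, the pointwise q.e.\ convergence $y_n \to y$ (along a subsequence) combined with the uniform bound $v_n^{(k)} \le v \le 1$ gives $v_n^{(k)} \to v^{(k)} := \min(v,\, k(y - \psi)_+)$ in $L^2(\Omega)$ and, via the $H_0^1$-continuity of lattice operations, in $H_0^1(\Omega)$. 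Since $v$ vanishes q.e.\ on $A(u)$ and $k(y - \psi)_+ \to +\infty$ pointwise on $I(u) \supset \{v > 0\}$, the family $\{v^{(k)}\}_k$ is monotone nondecreasing with pointwise limit $v$; the identity $v^{(k)} = v$ on the set $\{v \le k(y - \psi)\}$, which exhausts $\Omega$ up to a null set as $k \to \infty$, together with a dominated-convergence argument for $|\nabla v - \nabla v^{(k)}|^2$, then yields $v^{(k)} \to v$ strongly in $H_0^1(\Omega)$. A diagonal choice $k = k(n) \to \infty$ independent of the subsequence, combined with the Urysohn principle, finally produces $v_n := v_n^{(k(n))} \in H_0^1(I(u_n))$ with $v_n \to v$ in $H_0^1(\Omega)$.

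The principal obstacle is verifying the Sobolev regularity $v_n^{(k)} \in H_0^1(\Omega)$ and justifying the chain-rule computations involving $\nabla(y - \psi)$, which is delicate because $\psi$ is only quasi-upper-semicontinuous and may attain the value $-\infty$ on a set of positive capacity. To overcome this, I would exploit the bound $v \le 1$ to rewrite $v_n^{(k)} = k\bigl(y_n - \min(y_n,\, \psi + v/k)\bigr)$, confining the $\psi$-dependence to the thin layer $\{0 \le y_n - \psi \le 1/k\}$ adjacent to $A(u_n)$. On the quasi-open set $\{\psi = -\infty\}$, which is contained q.e.\ in $I(u)$ since $y > -\infty$ q.e., one has $v_n^{(k)} = v$ automatically. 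On the complementary set $\{\psi > -\infty\}$, I would approximate $\psi$ from below by bounded Borel functions $\psi^{(m)} \nearrow \psi$, apply standard Sobolev lattice estimates to the Lipschitz expression $\min(y_n,\, \psi^{(m)} + v/k)$ built from the genuine $H^1$ data $y_n$ and $v$, and pass to the limit $m \to \infty$. The same approximation machinery justifies the chain-rule step in the strong $H_0^1$ convergence $v^{(k)} \to v$.
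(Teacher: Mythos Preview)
Your truncation-and-diagonal strategy is natural and would go through smoothly if $\psi\in H^1(\Omega)$, but under the paper's standing hypothesis that $\psi$ is merely quasi upper-semicontinuous the argument has a genuine gap precisely at the point you flag as the ``principal obstacle'', and the fix you sketch does not close it.

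First, the algebraic identity you invoke is false: using $y_n\ge\psi$ q.e.\ one has
\[
k\bigl(y_n-\min(y_n,\psi+v/k)\bigr)=k\bigl(y_n-\psi-v/k\bigr)_+,
\]
which is \emph{not} $\min(v,k(y_n-\psi))$; the correct rewriting is $v_n^{(k)}=k\bigl(y_n-\max(y_n-v/k,\psi)\bigr)$. More importantly, neither form helps. Replacing $\psi$ by a bounded Borel function $\psi^{(m)}$ still leaves you with an expression of the type $x\mapsto F(x,y_n(x),v(x))$ where $F(x,a,b)$ is Lipschitz in $(a,b)$ uniformly in $x$ but depends on $x$ through the non-Sobolev datum $\psi^{(m)}(x)$. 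Standard lattice/Stampacchia results cover compositions $G(u)$ with $G$ Lipschitz and $u\in H^1$; they do \emph{not} cover $x$-dependent truncations against a merely measurable level function. A map as innocent as $(x,a)\mapsto a+g(x)$ with $g$ bounded Borel but $g\notin H^1$ already shows that such expressions need not lie in $H^1$. So there is no reason for $v_n^{(k)}$, or its approximants, to belong to $H_0^1(\Omega)$, and once that fails the entire chain of $H_0^1$-convergences ($v_n^{(k)}\to v^{(k)}$ for fixed $k$, $v^{(k)}\to v$) loses its meaning.

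The paper's proof sidesteps the regularity of $\psi$ entirely. It never forms $y-\psi$ as a Sobolev function; it only uses that the level sets $\{y>\psi+t_n\}$ are quasi-open (which follows from quasi lower-semicontinuity of $y-\psi$) and invokes the quasi-covering \cref{lem:quasi-covering} to produce $\tilde v_n\in H_0^1(\{y>\psi+t_n\})$ with $\tilde v_n\to v$. The passage from $\{y>\psi+t_n\}$ to $I(u_n)=\{y_n>\psi\}$ is then handled by a pure capacity estimate $\capa(\{|y_n-y|>t_n\})\le t_n^{-2}\|y_n-y\|_{H_0^1}^2\to 0$ together with small $H_0^1$-cutoffs $w_n$, setting $v_n=\max(\tilde v_n-w_n,0)$. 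Every object appearing is manifestly in $H_0^1(\Omega)$, and $\psi$ enters only through quasi-open sets. If you want to rescue your approach in full generality, you would need a comparable device that avoids weak differentiability of anything built from $\psi$.
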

\begin{proof}
	We set $y = S(u)$ and $y_n = S(u_n)$.
	Let $t_n := \sup_{m = n,\ldots,\infty} \norm{y_m - y}_{H_0^1(\Omega)}^{1/2}$.
	Then, $\{t_n\}$ is a
	decreasing sequence of nonnegative numbers
	with $t_n \ge \norm{y_n - y}_{H_0^1(\Omega)}^{1/2}$
	and $t_n \searrow 0$.
	We have
	$
		\{ y > \psi \}
		=_q
		\bigcup_{n = 1}^\infty \{ y > \psi + t_n \}.
	$
	Since the sets on the right-hand side are quasi-open and increasing in $n$,
	we can apply
	\cref{lem:quasi-covering}.
	This yields a sequence $\{\tilde v_n\} \subset H_0^1(\Omega)$
	with
	$\tilde v_n \to v$ in $H_0^1(\Omega)$,
	$0 \le \tilde v_n \le 1$
	and $\tilde v_n = 0$ q.e.\ on $\{y \le \psi + t_n\}$.

	Next, we have
	\begin{align*}
		\capa\bigh(){ \{y_n = \psi\} \cap \{y > \psi + t_n \} }
		&\le
		\capa(\{\abs{y_n - y} > t_n\})
		\le
		t_n^{-2} \, \norm{y_n - y}_{H_0^1(\Omega)}^2
		\to
		0.
	\end{align*}
	Thus, there exists $w_n \in H_0^1(\Omega)$
	with
	$w_n \to 0$ in $H_0^1(\Omega)$,
	$0 \le w_n \le 1$
	and
	$w_n = 1$ q.e.\ on $\{\abs{y_n - y} > t_n\}$.
	We set $v_n := \max(\tilde v_n - w_n, 0)$.
	By construction, $v_n \to v$
	and $v_n = 0$ q.e.\ on $\{y_n = \psi\}$,
	i.e., $v_n \in H_0^1(I(u_n))$.
\end{proof}

Next, we give a characterization of $\pBss S(u)$.
\begin{theorem}
	\label{thm:characterization_pBss}
	Let $u \in H^{-1}(\Omega)$ be given.
	Then,
	\begin{equation*}
		\pBss S(u)
		=
		\{
			L_{\hat\Omega}
			\mid
			\hat\Omega \text{ is quasi-open and }
			I(u) \subset_q \hat\Omega \subset_q \Omega \setminus A_s(u)
		\}.
	\end{equation*}
\end{theorem}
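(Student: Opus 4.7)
The plan is to prove the two inclusions separately: ``$\subseteq$'' by an approximation argument, and ``$\supseteq$'' by an explicit construction, for every admissible $\hat\Omega$, of a sequence of differentiability points at which the derivative is the constant operator $L_{\hat\Omega}$.

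For the inclusion ``$\subseteq$'', let $L \in \pBss S(u)$ with associated sequence $u_n \to u$ in $H^{-1}(\Omega)$ and $S'(u_n) \toSOT L$. Since $\pBss S(u) \subset \pBws S(u)$ by \cref{prop:properties_derivatives}, \cref{lem:upper_estimate_pBws} immediately produces a quasi-open set $\hat\Omega \subset_q \Omega \setminus A_s(u)$ with $L = L_{\hat\Omega}$. What remains is the lower bound $I(u) \subset_q \hat\Omega$. For this I pick a witness $v \in H_0^1(I(u))$ with $0 \le v \le 1$ and $\{v > 0\} =_q I(u)$, for example a suitable multiple of the torsion function $L_{I(u)}(1)$, which is dominated by the bounded function $L_\Omega(1)$. \cref{lem:approximation_on_function_on_inactive_set} then yields approximants $v_n \in H_0^1(I(u_n))$ with $v_n \to v$ in $H_0^1(\Omega)$. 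Writing $v_n = L_{I(u_n)}(-\Delta v_n) = S'(u_n)(-\Delta v_n)$ and combining $S'(u_n) \toSOT L_{\hat\Omega}$ with $-\Delta v_n \to -\Delta v$ in $H^{-1}(\Omega)$, \cref{lem:SOT_und_WOT} gives $v = L_{\hat\Omega}(-\Delta v) \in H_0^1(\hat\Omega)$, whence $I(u) =_q \{v > 0\} \subset_q \hat\Omega$.

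For the inclusion ``$\supseteq$'', fix a quasi-open $\hat\Omega$ with $I(u) \subset_q \hat\Omega \subset_q \Omega \setminus A_s(u)$. Set $y := S(u)$, $\xi := -\Delta y - u \in H^{-1}(\Omega)^+$ with $\fsupp(\xi) =_q A_s(u)$, $w := L_{\hat\Omega}(1)$, and $\lambda := 1 + \Delta w$. By \cref{thm:torsion_function_properties}, $\lambda \in H^{-1}(\Omega)^+$ with $\fsupp(\lambda) =_q \Omega \setminus \hat\Omega$. I then define
\begin{equation*}
	y_n := y + \tfrac{1}{n}\, w, \qquad
	\xi_n := \xi + \tfrac{1}{n}\, \lambda, \qquad
	u_n := -\Delta y_n - \xi_n = u + \tfrac{1}{n}\bigl(1 - 2\lambda\bigr),
\end{equation*}
so that $u_n \to u$ strongly in $H^{-1}(\Omega)$. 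To show $y_n = S(u_n)$ one verifies $y_n \ge \psi$, $\xi_n \ge 0$, and the complementarity pairing of $\xi_n$ against $y_n - \psi$. The only non-routine observation is that the four cross terms in the expansion all vanish via \cref{lem:fine_support}: $w$ vanishes on $\fsupp(\xi) \cup \fsupp(\lambda) \subset_q \Omega \setminus \hat\Omega$, while $y - \psi$ vanishes on $A(u) \supset_q \Omega \setminus \hat\Omega$. Next, $A(u_n) = A(u) \cap \{w = 0\} =_q \Omega \setminus \hat\Omega$ (using $I(u) \subset_q \hat\Omega$) and $A_s(u_n) =_q \fsupp(\xi) \cup \fsupp(\lambda) =_q \Omega \setminus \hat\Omega$, since the fine support of a sum of positive functionals in $H^{-1}(\Omega)^+$ equals the union of the fine supports. \cref{lem:gateaux_S} now delivers $u_n \in D_S$ with $S'(u_n) = L_{I(u_n)} = L_{\hat\Omega}$, so trivially $S'(u_n) \toSOT L_{\hat\Omega}$ and $L_{\hat\Omega} \in \pBss S(u)$.

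The main obstacle is the direction ``$\subseteq$'', where the operator-theoretic information $S'(u_n) \toSOT L_{\hat\Omega}$ must be converted into the geometric statement $I(u) \subset_q \hat\Omega$; \cref{lem:approximation_on_function_on_inactive_set} is precisely the tool that bridges this gap, and the choice of a torsion-function witness ensures that the limit inclusion detects all of $I(u)$. The direction ``$\supseteq$'' is conceptually cleaner once one identifies the correct perturbation of $u$ via the torsion function $w$ and the associated positive functional $\lambda = 1 + \Delta w$ from \cref{thm:torsion_function_properties}; the verification then reduces to bookkeeping with fine supports.
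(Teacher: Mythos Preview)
Your proof is correct and follows essentially the same approach as the paper; in particular, the ``$\subset$'' argument is identical, and for ``$\supset$'' the paper uses the same perturbation $y_n := y + \tfrac{1}{n} v$, $\xi_n := \xi + \tfrac{1}{n}\lambda$ with $v$ and $\lambda$ supplied by \cref{thm:torsion_function_properties}. The only minor difference is that the paper verifies $y_n = S(u_n)$ by testing the variational inequality directly against $z - y_n \in H_0^1(\Omega)$, which sidesteps the need to pair $\xi$ or $\lambda$ with $y - \psi$ (a function that need not lie in $H_0^1(\Omega)$, so your appeal to \cref{lem:fine_support}~(iii) for those two terms should really go through the measure interpretation of \cref{lem:fine_support}~(i)--(ii)).
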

\begin{proof}
	``$\subset$'':
	Let $L \in \pBss S(u)$ be given.
	By definition, $S'(u_n) \toSOT L$ for some sequence $\{u_n\} \subset D_S$
	with $u_n \to u$.
	From \cref{lem:upper_estimate_pBws} and $L \in \pBss S(u) \subset \pBws S(u)$, we already have
	$L = L_{\hat\Omega}$ for some quasi-open $\hat\Omega \subset_q \Omega \setminus A_s(u)$.
	It remains to check $I(u) \subset_q \hat\Omega$.

	By \cref{thm:torsion_function_properties},
	there is a function $v \in H_0^1(\Omega)$ with $0 \le v \le 1$ and $I(u) =_q \{v > 0\}$.
	From \cref{lem:approximation_on_function_on_inactive_set},
	we get a sequence $\{v_n\}$ with $v_n \to v$
	and $v_n \in H_0^1(I(u_n))$.
	Together with \cref{lem:SOT_und_WOT}, we find
	\begin{equation*}
		v =
		\lim_{n \to \infty} v_n
		=
		\lim_{n \to \infty} S'(u_n) (-\Delta v_n)
		=
		L_{\hat\Omega} (-\Delta v).
	\end{equation*}
	This gives $I(u) =_q \{v > 0\} \subset_q \hat\Omega$.

	``$\supset$'':
	Let $\hat\Omega$ be given as in the formulation of the theorem.
	From \cref{thm:torsion_function_properties},
	we get a function $v \in H_0^1(\Omega)^+$ with $\{v > 0\}=_q\hat\Omega$.
	Similarly,
	\cref{thm:torsion_function_properties}
	gives $\lambda \in H^{-1}(\Omega)^+$
	with
	$\fsupp(\lambda) =_q \Omega \setminus \hat\Omega$.
	We define
	$u_n := u - (\Delta v + \lambda) / n$.
	Let us check that $y_n := y + v / n$
	satisfies $y_n = S(u_n)$.
	From $v \ge 0$, we infer $y_n \in K$.
	Further, for arbitrary $z \in K$ we have
	\begin{align*}
		\dual{ - \Delta y_n - u_n}{z - y_n}
		&=
		\Bigdual{
			-\Delta y - \frac1n \Delta v - u + \frac1n \Delta v + \frac1n \lambda
		}{ z - y - \frac1n v }
		\\
		&=
		\dual{ -\Delta y - u}{ z - y}
		+
		\Bigdual{ -\Delta y - u}{- \frac1n v }
		+
		\Bigdual{\frac1n \lambda }{ z - y - \frac1n v }
		\\
		&\ge 0 + 0 + 0.
	\end{align*}
	The second term is zero due to $\xi = -\Delta y - u$, $\fsupp(\xi) =_q A_s(u)$
	and $v = 0$ on $\Omega \setminus \hat\Omega \supset_q A_s(u)$.
	Similarly, the third term is non-negative
	since $\fsupp(\lambda) =_q \Omega \setminus \hat\Omega$
	and
	$z \ge \psi = y + v/n$ on $\Omega\setminus\hat\Omega$.
	Hence, $y_n = S(u_n)$
	and $\xi_n := -\Delta y_n - u_n = \xi + \lambda / n$.
	Thus,
	$I(u_n) =_q \hat\Omega =_q \Omega \setminus A_s(u_n)$,
	i.e., $u_n \in D_S$.
	Finally,
	$S'(u_n) = L_{\hat\Omega}$ and $u_n \to u$
	ensure $L_{\hat\Omega} \in \pBss S(u)$.
\end{proof}

We can also give a characterization of $\pBws S(u)$.
\begin{theorem}
	\label{thm:characterization_pBws}
	Let $u \in H^{-1}(\Omega)$ be given.
	Then,
	\begin{equation*}
		\pBws S(u)
		=
		\{
			L_{\hat\Omega}
			\mid
			\hat\Omega \text{ is quasi-open and }
			I(u) \subset_q \hat\Omega \subset_q \Omega \setminus A_s(u)
		\}.
	\end{equation*}
\end{theorem}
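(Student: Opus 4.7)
The inclusion ``$\supset$'' is immediate: every $L_{\hat\Omega}$ on the right belongs to $\pBss S(u)$ by \cref{thm:characterization_pBss}, hence to $\pBws S(u)$ by \cref{prop:properties_derivatives}(i). For ``$\subset$'', take $L \in \pBws S(u)$ with a witnessing sequence $\{u_n\} \subset D_S$ satisfying $u_n \weakly u$ in $H^{-1}(\Omega)$, $y_n := S(u_n) \weakly y := S(u)$ in $H_0^1(\Omega)$, and $S'(u_n) = L_{I(u_n)} \toSOT L$. \cref{lem:upper_estimate_pBws} yields $L = L_{\hat\Omega}$ for some quasi-open $\hat\Omega \subset_q \Omega \setminus A_s(u)$, so the task reduces to establishing $I(u) \subset_q \hat\Omega$. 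The key tool is that $L_{I(u_n)} \toSOT L_{\hat\Omega}$ is equivalent to Mosco convergence of $H_0^1(I(u_n))$ to $H_0^1(\hat\Omega)$, see the remark following \cref{lem:SOT_convergence}; concretely, I will only use the ``$\liminf$'' half (M1): if $v_n \in H_0^1(I(u_n))$ and $v_n \weakly v$ in $H_0^1(\Omega)$, then $v \in H_0^1(\hat\Omega)$.

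The main obstacle is that the natural test function $y_n - \psi$, which would immediately yield the claim, is not generally an element of $H_0^1(\Omega)$ because $\psi$ is merely quasi-upper-semicontinuous. I plan to circumvent this by approximating $I(u)$ from inside. For $\epsilon > 0$, set $V_\epsilon := \{y > \psi + \epsilon\}$, which is quasi-open since $y - \psi$ is quasi-lower-semicontinuous, and $V_\epsilon \nearrow I(u)$ as $\epsilon \searrow 0$. Let $w_\epsilon := L_{V_\epsilon}(1)$ be the torsion function of $V_\epsilon$. By \cref{thm:torsion_function_properties}, $\{w_\epsilon > 0\} =_q V_\epsilon$, and by the comparison principle $w_\epsilon \le w_\Omega$ q.e., with $C := \norm{w_\Omega}_{L^\infty(\Omega)} < \infty$ by standard elliptic regularity on the bounded domain $\Omega$. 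Define $\phi_\epsilon := (\epsilon/C)\, w_\epsilon \in H_0^1(\Omega)^+$; then $0 \le \phi_\epsilon \le \epsilon$ with $\{\phi_\epsilon > 0\} =_q V_\epsilon$, and consequently $\phi_\epsilon \le y - \psi$ q.e.\ in $\Omega$ (bounded by $\epsilon < y - \psi$ on $V_\epsilon$, and vanishing on $\Omega \setminus V_\epsilon$ where $y - \psi \ge 0$). In particular $z_\epsilon := y - \phi_\epsilon \in K$.

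Now consider $v_n := (y_n - z_\epsilon)^+ \in H_0^1(\Omega)$. On $A(u_n)$, we have $y_n = \psi \le z_\epsilon$, so $v_n$ vanishes there; hence $v_n \in H_0^1(I(u_n))$. The positive-part map is weakly sequentially continuous on $H_0^1(\Omega)$ (by Rellich compactness combined with passage to an a.e.-convergent subsequence and uniqueness of weak limits), so $v_n \weakly (y - z_\epsilon)^+ = \phi_\epsilon$ in $H_0^1(\Omega)$. Mosco's (M1) then gives $\phi_\epsilon \in H_0^1(\hat\Omega)$, whence $V_\epsilon =_q \{\phi_\epsilon > 0\} \subset_q \hat\Omega$. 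Taking the union over $\epsilon = 1/k$ produces $I(u) =_q \bigcup_{k \in \N} V_{1/k} \subset_q \hat\Omega$, finishing the proof.
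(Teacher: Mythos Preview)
Your proof is correct and takes a genuinely different route from the paper's argument.

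Both proofs obtain $L = L_{\hat\Omega}$ with $\hat\Omega \subset_q \Omega \setminus A_s(u)$ from \cref{lem:upper_estimate_pBws} and must then show $I(u) \subset_q \hat\Omega$ despite only having $y_n \weakly y$; the difficulty in each case is that $y - \psi \notin H_0^1(\Omega)$. The paper works on the dual side: it uses the torsion functions $w_n = L_{I(u_n)}(1) \to w = L_{\hat\Omega}(1)$ strongly, together with the nonnegative measures $1 + \Delta w_n$ supported on $A(u_n)$, and approximates $y - \psi$ monotonically from below by $H_0^1$ functions via \cite[Lemma~1.5]{DalMaso1983} (which forces a brief excursion to capacities on $\R^d$). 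Passing to the limit yields $\int_\Omega (y-\psi)\,\d(1+\Delta w) = 0$, hence $y = \psi$ q.e.\ on $\fsupp(1+\Delta w) =_q \Omega \setminus \hat\Omega$. You instead stay on the primal side: you exploit the Mosco weak-closure property of $H_0^1(I(u_n)) \to H_0^1(\hat\Omega)$ and build explicit subsolutions $\phi_\epsilon \le y - \psi$ with $\{\phi_\epsilon > 0\} =_q \{y > \psi + \epsilon\}$ from scaled torsion functions, then feed the weakly convergent sequence $(y_n - y + \phi_\epsilon)^+ \in H_0^1(I(u_n))$ through Mosco to place $\phi_\epsilon$ in $H_0^1(\hat\Omega)$. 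Your approach avoids the external approximation lemma and the $\R^d$-capacity detour, at the price of invoking the Mosco equivalence and the $L^\infty$ bound for $L_\Omega(1)$; note that the Mosco step you use can alternatively be read off directly from \cref{lem:SOT_und_WOT}(iii) and self-adjointness of $L_{I(u_n)}$, making your argument essentially self-contained within the paper.
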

\begin{proof}
	``$\subset$'':
	Let $L \in \pBws S(u)$ be given.
	By definition, $S'(u_n) \toSOT L$ for some sequence $\{u_n\} \subset D_S$
	with $u_n \weakly u$ and $S(u_n) \weakly S(u)$.
	From \cref{lem:upper_estimate_pBws}, we already have
	$L = L_{\hat\Omega}$ for some quasi-open $\hat\Omega \subset_q \Omega \setminus A_s(u)$.
	It remains to check $I(u) \subset_q \hat\Omega$.

	We set $w = L_{\hat\Omega} 1$
	and $w_n = S'(u_n) 1=L_{I(u_n)} 1$.
	From \cref{thm:torsion_function_properties},
	we find
	$1 + \Delta w_n \ge 0$ and $\fsupp(1 + \Delta w_n) =_q A(u_n)$.
	Since $y_n := S(u_n) = \psi$ q.e.\ on $A(u_n)$
	and since $y_n$ and $\psi$ are assumed to be Borel measurable, this gives
	\begin{equation*}
		\int_\Omega (y_n - \psi) \, \d( 1+ \Delta w_n) = 0.
	\end{equation*}

	In the next few lines, we need to work with a capacity on all of $\R^d$.
	This can be defined as in \cite[Section~1]{DalMaso1983}.
	The function $y - \psi$ is non-negative and quasi lower-semicontinuous.
	Moreover, if we extend this function by $0$, it is quasi lower-semicontinuous
	on all of $\R^d$.
	Now, \cite[Lemma~1.5]{DalMaso1983} implies the existence of an increasing sequence $\{z_m\}_{m \in \N} \subset H^1(\R^d)$
	with $0 \le z_m$ and $z_m \nearrow y - \psi$ pointwise q.e.\ on $\R^d$.
	From $y - \psi = 0$ on $\R^d \setminus \Omega$,
	we have $z_m = 0$ q.e.\ on $\R^d \setminus \Omega$.
	Thus, $z_m \in H_0^1(\Omega)$, see \cite[Theorem~4.5]{HeinonenKilpelaeinenMartio1993}.
	This yields
	\begin{equation*}
		\int_\Omega (z_m - y + y_n) \, \d(1+\Delta w_n)
		\le
		\int_\Omega (y_n - \psi) \, \d(1+\Delta w_n)
		=
		0.
	\end{equation*}
	From $y_n \weakly y$ in $H_0^1(\Omega)$
	and $w_n \to w$ in $H_0^1(\Omega)$,
	we infer
	\begin{equation*}
		0
		\le
		\int_\Omega z_m \, \d(1+\Delta w)
		=
		\lim_{n \to \infty}
		\int_\Omega (z_m - y + y_n) \, \d(1+\Delta w_n)
		\le
		0.
	\end{equation*}
	Hence,
	\begin{equation*}
		\int_\Omega z_m \, \d(1+\Delta w)
		=
		0.
	\end{equation*}
	Finally, $\{z_m\}$ converges monotonically pointwise q.e.\ to $y - \psi$.
	The monotone convergence theorem implies
	\begin{equation*}
		\int_\Omega (y - \psi) \, \d( 1+ \Delta w)
		=
		\lim_{m \to \infty}
		\int_\Omega z_m \, \d(1+\Delta w)
		=
		0.
	\end{equation*}
	Therefore,
	$y - \psi = 0$ q.e.\ on $\fsupp(1 + \Delta w) =_q \Omega \setminus \hat\Omega$.
	Hence,
	$\Omega \setminus \hat\Omega \subset_q A(u)$
	and this yields the desired
	$I(u) \subset_q \hat\Omega$.
	
	``$\supset$'':
	This follows from 
	$\pBws S(u)\supset \pBss S(u)$
	and \cref{thm:characterization_pBss}.
\end{proof}

\cref{thm:characterization_pBss,thm:characterization_pBws}
show that $\pBws S(u) = \pBss S(u)$ for all $u \in H^{-1}(\Omega)$
without any regularity assumptions on the data.

\section{The strong-weak generalized derivative}
\label{sec:strong_weak_generalized_derivative}
In this section, we investigate $\pBsw S(u)$.
Since this generalized differential involves the WOT
for the convergence of the derivatives,
we expect that the resulting set is significantly larger than
$\pBss S(u)$.
In fact, we will see that capacitary measures enter the stage.
As a first result, we prove an upper bound.
\begin{lemma}
	\label{lem:upper_estimate_pBsw}
	Let $u \in H^{-1}(\Omega)$ be given.
	Then,
	\begin{equation}
		\label{est:upper_estimate_pBsw}
		\pBsw S(u)
		\subset
		\{
			L_\mu
			\mid
			\mu \in \MM_0(\Omega),
			\mu( I(u) ) = 0
			\text{ and }
			\mu = +\infty \text{ on } A_s(u)
		\}.
	\end{equation}
	Here, $\mu = +\infty$ on $A_s(u)$ is to be understood as
	\begin{equation}
		\label{eq:infty_on_A_s}
		\forall v \in H_0^1(\Omega) \cap L^2_\mu(\Omega):
		\qquad
		v = 0 \text{ q.e.\ on } A_s(u)
		.
	\end{equation}
\end{lemma}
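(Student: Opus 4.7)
The plan is to start from the definition of $\pBsw S(u)$: choose a sequence $\{u_n\} \subset D_S$ with $u_n \to u$ strongly in $H^{-1}(\Omega)$ and $L_{I(u_n)} = S'(u_n) \toWOT L$. Writing $\mu_n := \infty_{\Omega \setminus I(u_n)} \in \MM_0(\Omega)$, we have $L_{\mu_n} = L_{I(u_n)}$, so the compactness of $\MM_0(\Omega)$ in \cref{thm:compactness} delivers, along a subsequence, $\mu_n \togamma \mu$ for some $\mu \in \MM_0(\Omega)$. By the definition of $\gamma$-convergence combined with the WOT hypothesis, the limit forces $L = L_\mu$. Along the way, I record that Lipschitz continuity of $S$ gives $y_n := S(u_n) \to y := S(u)$ strongly in $H_0^1(\Omega)$ and $\xi_n := -\Delta y_n - u_n \to \xi := -\Delta y - u$ strongly in $H^{-1}(\Omega)$, with $\fsupp(\xi_n) =_q A_s(u_n)$ and $\fsupp(\xi) =_q A_s(u)$. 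These convergences are essential for the $A_s$-step below.

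To prove $\mu(I(u)) = 0$, I would start from the torsion function $w := L_{I(u)}(1)$, which by \cref{thm:torsion_function_properties} is nonnegative and satisfies $\{w > 0\} =_q I(u)$. Consider the cut-offs $v^k := \min(k\,w, 1) \in H_0^1(I(u))$, which are bounded by $1$ and satisfy $v^k \nearrow \I_{I(u)}$ pointwise q.e. For each fixed $k$, \cref{lem:approximation_on_function_on_inactive_set} produces $v^k_n \in H_0^1(I(u_n))$ with $v^k_n \to v^k$ strongly in $H_0^1(\Omega)$. Since $v^k_n \in H_0^1(\Omega) \cap L^2_{\mu_n}(\Omega)$ with $\int_\Omega (v^k_n)^2 \,\d\mu_n = 0$, we have $F_{\mu_n}(v^k_n) = \int_\Omega \abs{\nabla v^k_n}^2 \,\dx \to \int_\Omega \abs{\nabla v^k}^2 \,\dx$. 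The $\Gamma$-convergence $F_{\mu_n} \toGamma F_\mu$ from \cref{lem:equivalencies_gamma_convergence} then yields
\begin{equation*}
	F_\mu(v^k) \le \liminf_{n \to \infty} F_{\mu_n}(v^k_n) = \int_\Omega \abs{\nabla v^k}^2 \,\dx,
\end{equation*}
so $\int_\Omega (v^k)^2 \,\d\mu = 0$. Monotone convergence in $k$ gives $\mu(I(u)) = 0$.

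For the vanishing condition \eqref{eq:infty_on_A_s} on $A_s(u)$, I would first reduce via \cref{lem:density_image_L_mu} to the case $v = L_\mu(f)$ for some $f \in H^{-1}(\Omega)$, and set $v_n := L_{\mu_n}(f) \in H_0^1(I(u_n))$; by $\gamma$-convergence, $v_n \weakly v$ in $H_0^1(\Omega)$. Since $v_n$ vanishes q.e.\ on $A(u_n) \supset_q A_s(u_n) =_q \fsupp(\xi_n)$, \cref{lem:fine_support} gives $\dual{\xi_n}{\abs{v_n}} = 0$. The main obstacle is that the fine supports $\fsupp(\xi_n)$ need not respect any monotone or asymptotic inclusion with $\fsupp(\xi) = A_s(u)$, so we cannot transport the set-theoretic information to the limit directly. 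I sidestep this by passing to the limit in the duality: strong convergence $\xi_n \to \xi$ in $H^{-1}(\Omega)$ combined with weak sequential continuity of the modulus on $H_0^1(\Omega)$ (so that $v_n \weakly v$ implies $\abs{v_n} \weakly \abs{v}$, via compact embedding into $L^2(\Omega)$, pointwise a.e.\ convergence along a subsequence, and uniqueness of the weak limit) yields $\dual{\xi_n}{\abs{v_n}} \to \dual{\xi}{\abs{v}} = 0$. Applying \cref{lem:fine_support} once more forces $\abs{v} = 0$, hence $v = 0$, q.e.\ on $A_s(u) =_q \fsupp(\xi)$, which is exactly \eqref{eq:infty_on_A_s}.
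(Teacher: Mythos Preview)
Your proof is correct and largely parallels the paper's argument; in particular, the step for $\mu=+\infty$ on $A_s(u)$ is essentially identical (the paper also passes to the limit in $\dual{\xi_n}{\abs{v_n}}=0$ using $\xi_n\to\xi$ strongly and $\abs{v_n}\weakly\abs{v}$, then invokes density of the range of $L_\mu$).

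The one genuine difference is in the verification of $\mu(I(u))=0$. You use the $\Gamma$-convergence characterization: with $v^k_n\to v^k$ strongly and $F_{\mu_n}(v^k_n)=\norm{v^k_n}_{H_0^1(\Omega)}^2$, the $\liminf$-inequality \eqref{eq:Gamma_convergence_1} directly forces $\int_\Omega (v^k)^2\,\d\mu=0$, and you finish by monotone convergence in $k$. The paper instead argues at the operator level: from $v_n=L_{I(u_n)}(-\Delta v_n)$ and $-\Delta v_n\to-\Delta v$, \cref{lem:SOT_und_WOT}(ii) gives $v_n\weakly L_\mu(-\Delta v)$, hence $v=L_\mu(-\Delta v)$; testing the weak formulation of this equation with $v$ itself yields $\int_\Omega v^2\,\d\mu=0$, and since $\{v>0\}=_q I(u)$ one concludes $\mu(I(u))=0$ without any cutoff or monotone-limit step. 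Your route is perhaps more in the spirit of $\gamma$-convergence, while the paper's route stays with the solution-operator formulation and avoids the extra $k$-layer; both are short and self-contained.
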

\begin{proof}
	Let $L \in \pBsw S(u)$ be given.
	By definition, there is a sequence $\{u_n\} \subset D_S$
	with $u_n \to u$ in $H^{-1}(\Omega)$
	and
	$S'(u_n) \toWOT L$.
	From \cref{lem:completeness}
	we obtain $L = L_\mu$ for some $\mu \in \MM_0(\Omega)$.

	First, we show $\mu = +\infty$ on $A_s(u)$.
	Let $f \in H^{-1}(\Omega)$ be given.
	Then, $v_n := S'(u_n) f \weakly L_\mu f =: v$
	and $\abs{v_n} \weakly \abs{v}$ in $H_0^1(\Omega)$.
	For $\xi_n := -\Delta S(u_n) - u_n$ and $\xi := -\Delta S(u) - u$
	we have $\xi_n \to \xi$ in $H^{-1}(\Omega)$.
	It holds
	$\abs{v_n} = 0$ q.e.\ on $\fsupp(\xi_n) =_q A_s(u_n)$.
	This implies
	\begin{equation*}
		0
		=
		\lim_{n \to \infty} \dual{\xi_n}{\abs{v_n}}
		=
		\dual{\xi}{\abs{v}}.
	\end{equation*}
	Hence, $\abs{v} = 0$ q.e.\ on $\fsupp(\xi) =_q A_s(u)$.
	Since the range of $L_\mu$ is dense in
	$H_0^1(\Omega) \cap L^2_\mu(\Omega)$,
	see \cref{lem:density_image_L_mu},
	we have $\mu = +\infty$ on $A_s(u)$.

	It remains to show $\mu(I(u)) = 0$.
	Let $v \in H_0^1(I(u))$ with $0 \le v \le 1$
	and
	$\{v > 0\} =_q I(u)$
	be given, see \cref{thm:torsion_function_properties}.
	By \cref{lem:approximation_on_function_on_inactive_set},
	there exists a sequence $\{v_n\}$ with $v_n \to v$ in $H_0^1(\Omega)$
	and $v_n \in H_0^1(I(u_n))$.
	Therefore,
	$v_n = S'(u_n) (-\Delta v_n)$.
	Since $-\Delta v_n \to -\Delta v$ in $H^{-1}(\Omega)$,
	\cref{lem:SOT_und_WOT}~(ii)
	implies
	$v_n = S'(u_n) (-\Delta v_n) \weakly L_\mu(-\Delta v)$.
	Hence, $v = L_\mu(-\Delta v)$ and therefore, $v \in L^2_\mu(\Omega)$.
	Testing the associated weak formulation with $v$, we infer
	\begin{equation*}
		\int_\Omega \abs{\nabla{v}}^2 \, \d x
		+
		\int_\Omega v^2 \, \d\mu
		=
		\dual{-\Delta v}{v}
		=
		\int_\Omega \abs{\nabla{v}}^2 \, \d x
		.
	\end{equation*}
	Hence,
	$\int_\Omega v^2\,\d\mu = 0$
	and this means
	$v = 0$ $\mu$-a.e.\ on $\Omega$.
	Since
	$v > 0$ q.e.\ on $I(u)$
	and since $\mu$ does not charge polar sets,
	we have
	$v > 0$ $\mu$-a.e.\ on $I(u)$.
	This implies $\mu(I(u)) = 0$.
\end{proof}

To illustrate the meaning of $\mu = +\infty$ on $A_s(u)$,
we give some equivalent reformulations.
\begin{lemma}
	\label{lem:mu_infty_A_s}
	Let $u \in H^{-1}(\Omega)$ and $\mu \in \MM_0(\Omega)$ be given.
	Then, the following assertions are equivalent.
	\begin{enumerate}
		\item\label{it:lem_mu_infty_A_s_1}
			$\mu = +\infty$ on $A_s(u)$ in the sense of \eqref{eq:infty_on_A_s}.
		\item\label{it:lem_mu_infty_A_s_2}
			$\forall v \in H_0^1(\{w_\mu > 0\}) : \quad v = 0 \text{ q.e.\ on } A_s(u)$.
		\item\label{it:lem_mu_infty_A_s_3}
			$w_\mu = 0 \text{ q.e.\ on } A_s(u)$.
		\item\label{it:lem_mu_infty_A_s_4}
			$\mu \ge \infty_{A_s(u)}$.
	\end{enumerate}
\end{lemma}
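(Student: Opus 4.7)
The plan is to cycle through the implications in the order \ref{it:lem_mu_infty_A_s_1} $\Rightarrow$ \ref{it:lem_mu_infty_A_s_2} $\Rightarrow$ \ref{it:lem_mu_infty_A_s_3} $\Rightarrow$ \ref{it:lem_mu_infty_A_s_4} $\Rightarrow$ \ref{it:lem_mu_infty_A_s_1}, using \cref{lem:completion_h01_l2} as the bridge between the Sobolev-analytic formulations \ref{it:lem_mu_infty_A_s_1} and \ref{it:lem_mu_infty_A_s_2}, \cref{thm:torsion_function_properties} together with \cref{lem:v=0_on_w=0} for the reduction to the torsion function \ref{it:lem_mu_infty_A_s_3}, and the measure-theoretic characterization of $\mu$ on $\{w_\mu = 0\}$ from \cite[Proposition~3.4]{DalMasoGarroni1994} for the transition to the pointwise bound \ref{it:lem_mu_infty_A_s_4}.

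For \ref{it:lem_mu_infty_A_s_1} $\Rightarrow$ \ref{it:lem_mu_infty_A_s_2} I would take $v \in H_0^1(\{w_\mu > 0\})$ and use \cref{lem:completion_h01_l2} to write $v$ as the $H_0^1(\Omega)$-limit of a sequence $\{v_n\} \subset H_0^1(\Omega) \cap L^2_\mu(\Omega)$. By \ref{it:lem_mu_infty_A_s_1} each $v_n$ vanishes q.e.\ on $A_s(u)$; passing to a subsequence which converges pointwise q.e.\ to $v$ via \cref{lem:pointwise_quasi_everywhere_convergence_of_a_subsequence} yields $v = 0$ q.e.\ on $A_s(u)$. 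The implication \ref{it:lem_mu_infty_A_s_2} $\Rightarrow$ \ref{it:lem_mu_infty_A_s_3} is then immediate upon plugging $v = w_\mu$ into \ref{it:lem_mu_infty_A_s_2}, since $w_\mu \in H_0^1(\Omega) \cap L^2_\mu(\Omega) \subset H_0^1(\{w_\mu > 0\})$ by \cref{lem:v=0_on_w=0}.

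The step \ref{it:lem_mu_infty_A_s_3} $\Rightarrow$ \ref{it:lem_mu_infty_A_s_4} is the one where I expect the most care is needed: if \ref{it:lem_mu_infty_A_s_3} holds, then $A_s(u) \subset_q \{w_\mu = 0\}$, so for every Borel set $B \subset \Omega$ with $\capa(B \cap A_s(u)) > 0$ one has $\capa(B \cap \{w_\mu = 0\}) > 0$; invoking \cite[Proposition~3.4]{DalMasoGarroni1994} this gives $\mu(B) = +\infty = \infty_{A_s(u)}(B)$. When $\capa(B \cap A_s(u)) = 0$ the right-hand side is $0$ and the inequality is trivial. The hard part is really the appeal to the characterization of $\mu$ on its singular set $\{w_\mu = 0\}$; without this structural result one cannot recover the pointwise domination from the single equality $w_\mu = 0$ on $A_s(u)$.

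Finally, for \ref{it:lem_mu_infty_A_s_4} $\Rightarrow$ \ref{it:lem_mu_infty_A_s_1} take $v \in H_0^1(\Omega) \cap L^2_\mu(\Omega)$. From $\mu \ge \infty_{A_s(u)}$ we obtain
\begin{equation*}
\int_\Omega v^2 \, \d\infty_{A_s(u)} \le \int_\Omega v^2 \, \d\mu < +\infty,
\end{equation*}
and by the very definition of $\infty_{A_s(u)}$ the left integral is finite only if $v = 0$ q.e.\ on $A_s(u)$, which is exactly \eqref{eq:infty_on_A_s}. This closes the cycle.
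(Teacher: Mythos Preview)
Your proof is correct and follows the same overall structure as the paper's: both use \cref{lem:completion_h01_l2} for the bridge between \ref{it:lem_mu_infty_A_s_1} and \ref{it:lem_mu_infty_A_s_2}, \cref{lem:v=0_on_w=0} for the link with \ref{it:lem_mu_infty_A_s_3}, and \cite[Proposition~3.4]{DalMasoGarroni1994} for \ref{it:lem_mu_infty_A_s_3} $\Rightarrow$ \ref{it:lem_mu_infty_A_s_4}. The one genuine difference is how the loop is closed: the paper goes \ref{it:lem_mu_infty_A_s_4} $\Rightarrow$ \ref{it:lem_mu_infty_A_s_3} via the comparison principle \cite[Theorem~2.10]{DalMasoMosco1986} (from $\mu \ge \infty_{A_s(u)}$ one gets $w_\mu \le w_{\infty_{A_s(u)}}$, and the latter vanishes on $A_s(u)$), whereas you go \ref{it:lem_mu_infty_A_s_4} $\Rightarrow$ \ref{it:lem_mu_infty_A_s_1} directly with the elementary integral inequality $\int_\Omega v^2 \, \d\infty_{A_s(u)} \le \int_\Omega v^2 \, \d\mu < \infty$. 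Your route is slightly more self-contained, avoiding the external comparison result; the paper's route, on the other hand, makes the geometric content (the torsion function vanishing on $A_s(u)$) explicit.
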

\begin{proof}
	The equivalence between \ref{it:lem_mu_infty_A_s_1}
	and \ref{it:lem_mu_infty_A_s_2}
	follows from \cref{lem:completion_h01_l2}.
	From \cref{lem:v=0_on_w=0},
	we get that \ref{it:lem_mu_infty_A_s_2}
	and \ref{it:lem_mu_infty_A_s_3} are equivalent.

	Let us assume that \ref{it:lem_mu_infty_A_s_3} holds.
	By \cite[Proposition~3.4]{DalMasoGarroni1994},
	it holds $\mu(B)=+\infty$ for all Borel sets $B \subset \Omega$ with $\capa(B \cap \{w_\mu=0\})>0$
	and this gives \ref{it:lem_mu_infty_A_s_4}.

	Finally, \ref{it:lem_mu_infty_A_s_4}
	implies
	\ref{it:lem_mu_infty_A_s_3} by the comparison principle
	\cite[Theorem~2.10]{DalMasoMosco1986}.
\end{proof}

Note that if $u$ is a differentiability point of $S$,
then the right-hand side in \eqref{est:upper_estimate_pBsw}
reduces to $\{S'(u)\}$ and equality holds.

In the general case, the reverse inclusion in \eqref{est:upper_estimate_pBsw}
is much harder to obtain,
and we will prove it under some regularity assumption
on $\psi$.
However, in the very simple and artificial case that
the entire set $\Omega$ is biactive, i.e.,
$A(u) =_q \Omega$ and $A_s(u) =_q \emptyset$,
the equality in \eqref{est:upper_estimate_pBsw}
just follows from the density result in
\cref{lem:quasi_open_sets_dense}.
\begin{corollary}
	\label{cor:pBsw_biactive}
	Let $u \in H^{-1}(\Omega)$ be given such that
	$A(u) =_q \Omega$ and $A_s(u) =_q \emptyset$.
	Then,
	\begin{equation*}
		\pBsw S(u)
		=
		\{
			L_\mu
			\mid
			\mu \in \MM_0(\Omega)
		\}.
	\end{equation*}
	In particular, \eqref{est:upper_estimate_pBsw} holds with equality.
\end{corollary}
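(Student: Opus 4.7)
My plan for this corollary is to combine the upper bound already established in Lemma~\ref{lem:upper_estimate_pBsw} with a density argument, exploiting the WOT-closedness of $\pBsw S(u)$ afforded by Proposition~\ref{prop:properties_derivatives}~(iii).

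First, I would verify the inclusion ``$\subset$''. The hypothesis $A(u) =_q \Omega$ forces $I(u) =_q \emptyset$, and $A_s(u) =_q \emptyset$ is given directly. Consequently, the two constraints on $\mu$ appearing in the right-hand side of \eqref{est:upper_estimate_pBsw}, namely $\mu(I(u)) = 0$ and $\mu = +\infty$ on $A_s(u)$, are vacuous, so Lemma~\ref{lem:upper_estimate_pBsw} immediately yields $\pBsw S(u) \subset \{L_\mu : \mu \in \MM_0(\Omega)\}$.

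For the reverse inclusion, I would fix $\mu \in \MM_0(\Omega)$ and argue in two steps. Step one: for \emph{any} quasi-open set $\hat\Omega \subset \Omega$, the two conditions $I(u) \subset_q \hat\Omega$ and $\hat\Omega \subset_q \Omega \setminus A_s(u)$ from Theorem~\ref{thm:characterization_pBss} are automatically satisfied, since $I(u) =_q \emptyset$ and $\Omega \setminus A_s(u) =_q \Omega$. Hence $L_{\hat\Omega} \in \pBss S(u) \subset \pBsw S(u)$ for \emph{every} quasi-open $\hat\Omega \subset \Omega$. Step two: by Lemma~\ref{lem:quasi_open_sets_dense}, there exists a sequence of quasi-open sets $O_n \subset \Omega$ with $\infty_{\Omega \setminus O_n} \togamma \mu$, which unpacks (by the definition of $\gamma$-convergence and the identification $L_{\infty_{\Omega\setminus O_n}} = L_{O_n}$) to $L_{O_n} \toWOT L_\mu$ in $\LL(H^{-1}(\Omega), H_0^1(\Omega))$. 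Applying Proposition~\ref{prop:properties_derivatives}~(iii) with the constant sequence $x_n \equiv u$ and $L_n := L_{O_n} \in \pBsw S(u)$, I conclude $L_\mu \in \pBsw S(u)$.

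There is no serious obstacle; the argument is a clean chaining of three ingredients already in place, namely the vanishing of the constraints in Lemma~\ref{lem:upper_estimate_pBsw} under the biactivity hypothesis, the density result Lemma~\ref{lem:quasi_open_sets_dense}, and the WOT-closedness in Proposition~\ref{prop:properties_derivatives}~(iii). The only point requiring any care is to observe that the existence of differentiability points $u_k \to u$ realizing each $L_{O_n}$ as an SOT-limit of Gâteaux derivatives is already built into the ``$\supset$'' direction of Theorem~\ref{thm:characterization_pBss}, so that no additional construction of perturbed right-hand sides is needed in the present proof.
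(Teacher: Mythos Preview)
Your proof is correct and follows essentially the same route as the paper: use \cref{lem:upper_estimate_pBsw} for ``$\subset$'', then for ``$\supset$'' observe via \cref{thm:characterization_pBss} that every $L_{\hat\Omega}$ lies in $\pBss S(u) \subset \pBsw S(u)$, and close up in WOT using \cref{lem:quasi_open_sets_dense} together with \cref{prop:properties_derivatives}~(iii). Your presentation is slightly more explicit about why the constraints become vacuous and about invoking (iii) with the constant sequence $x_n \equiv u$, but the argument is the same.
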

\begin{proof}
	The inclusion ``$\subset$'' is established in \cref{lem:upper_estimate_pBsw}
	and it remains to check
	``$\supset$''.
	From \cref{thm:characterization_pBss}, we have
	\begin{equation*}
		\pBss S(u)
		=
		\{
			L_{\hat\Omega}
			\mid
			\hat\Omega\subset\Omega \text{ is quasi-open}
		\}
		\subset
		\pBsw S(u).
	\end{equation*}
	Since the closure of the left-hand side
	w.r.t.\ WOT
	is
	$\{ L_\mu \mid \mu \in \MM_0(\Omega) \}$,
	see \cref{lem:quasi_open_sets_dense},
	and since
	$\pBsw S(u)$ is closed in WOT,
	see \cref{prop:properties_derivatives},
	this yields the claim.
\end{proof}

The verification of the reverse inclusion in \eqref{est:upper_estimate_pBsw} in the general
case is much more delicate.
The reason is that the density result \cref{lem:quasi_open_sets_dense}
is typically proved in a rather abstract way, i.e.,
it is not easy to obtain the approximating sequence of quasi-open sets $O_n$.
We are going to use the explicit construction from \cite{DalMasoMalusa1995}.
This, however, needs that $A(u_n)$ contains an open neighborhood of $A(u)$
and, therefore, we have to assume some regularity of $y$ and $\psi$.
We give some preparatory lemmas.
\begin{lemma}
	\label{lem:approximation_of_un}
	Let $u \in H^{-1}(\Omega)$ be given and define $y := S(u)$.
	We assume that $y \in C_0(\Omega)$,
	$\psi \in C(\bar\Omega) \cap H^1(\Omega)$.
	Further,
	we assume that
	$\psi \in H_0^1(\Omega)$
	or $\psi < 0$ on $\partial\Omega$.

	Then, there exists a sequence $\{u_n\} \subset H^{-1}(\Omega)$
	such that $u_n \to u$ in $H^{-1}(\Omega)$,
	$y_n := S(u_n)$ satisfies
	$y_n = \psi$ on $\{y < \psi + 1/n\}$
	and
	$\xi = -\Delta y-u=-\Delta y_n - u_n$.
	In particular,
	$\{y < \psi + 1/n\}$
	is an open neighborhood of
	$\{y = \psi\}$.
\end{lemma}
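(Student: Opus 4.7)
The plan is to reduce the assertion to the construction of a sequence $\{y_n\} \subset H_0^1(\Omega)$ satisfying (a) $y_n \ge \psi$, (b) $y_n = \psi$ on $\{y < \psi + 1/n\}$, and (c) $y_n \to y$ in $H_0^1(\Omega)$. Given such $\{y_n\}$, I would set $u_n := -\Delta y_n - \xi$, so that the prescribed identity $-\Delta y_n - u_n = \xi$ is automatic and (c) yields $u_n \to u$ in $H^{-1}(\Omega)$. To verify $y_n = S(u_n)$, (a) gives $y_n \in K$, while $\xi \ge 0$ supplies the non-negativity of the multiplier. For the variational inequality $\dual{\xi}{z - y_n} \ge 0$ with $z \in K$, I would split
\[
\dual{\xi}{z - y_n} = \dual{\xi}{z - \psi} - \dual{\xi}{y_n - \psi},
\]
observe that the first term is non-negative from $\xi \ge 0$ and $z \ge \psi$, and that the second term vanishes by \cref{lem:fine_support}~(iii), since the non-negative function $y_n - \psi$ is zero on $A(u) \supset_q A_s(u) =_q \fsupp(\xi)$ by (b). The ``in particular'' assertion is immediate because $y - \psi$ is continuous on $\bar\Omega$.

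The natural candidate is $y_n := \max(\psi, y - 1/n) = \psi + (y - \psi - 1/n)^+$, which clearly satisfies (a) and (b). The $H_0^1$-convergence (c) follows from
\[
\norm{\nabla(y - y_n)}_{L^2(\Omega)}^2 = \int_{\{y - \psi < 1/n\}} \abs{\nabla(y - \psi)}^2 \dx \longrightarrow 0,
\]
where the limit is obtained by dominated convergence combined with Stampacchia's result that $\nabla(y - \psi)$ vanishes almost everywhere on $\{y = \psi\}$. Under the first regularity hypothesis $\psi \in H_0^1(\Omega)$, membership $y_n \in H_0^1(\Omega)$ is also immediate: $(y - \psi - 1/n)^+$ vanishes on a neighborhood of $\partial\Omega$ (by continuity of $y - \psi$ with boundary value $0$) and hence has compact support in $\Omega$, so it lies in $H_0^1(\Omega)$, and adding $\psi \in H_0^1(\Omega)$ gives the claim.

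In the alternative case $\psi < 0$ on $\partial\Omega$, the naive choice fails because $y_n$ would take the non-zero value $-1/n$ on $\partial\Omega$. The key observation is that $y - \psi \in C(\bar\Omega)$ with $(y - \psi)|_{\partial\Omega} = -\psi|_{\partial\Omega} > 0$, so there exist $\delta > 0$ and an open neighborhood $U$ of $\partial\Omega$ with $y - \psi \ge 2\delta$ on $U$; in particular $\{y \le \psi + \delta\}$ is a compact subset of $\Omega$. I would fix $\chi \in C_c^\infty(\Omega)$ with $0 \le \chi \le 1$, $\chi \equiv 1$ on $\{y \le \psi + \delta\}$, and $\chi \equiv 0$ on a neighborhood of $\partial\Omega$, and redefine $y_n := \max(\psi, y - \chi/n)$ for $n \ge 1/\delta$. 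On $\{y < \psi + 1/n\} \subset \{y \le \psi + \delta\}$ we have $\chi = 1$, so $y_n = \psi$ there; near $\partial\Omega$ we have $\chi = 0$ together with $y \ge 0 > \psi$, hence $y_n = y$, which proves $y_n \in H_0^1(\Omega)$. Statement (c) carries over, the only additional contribution $\int_\Omega \abs{\nabla\chi}^2 / n^2 \dx$ tending to zero. The main obstacle is precisely this boundary mismatch in the second case: localizing the constant shift $1/n$ via $\chi$ without destroying the identity $y_n = \psi$ near the active set is what forces us to exploit the compactness of $\{y \le \psi + \delta\}$ in $\Omega$, which itself is a direct consequence of the hypothesis $\psi < 0$ on $\partial\Omega$.
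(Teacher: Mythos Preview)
Your argument is correct and follows essentially the same route as the paper: construct $y_n = \max(y - 1/n, \psi)$ (respectively $\max(y - \chi/n, \psi)$ with a compactly supported cutoff $\chi$ in the second case), set $u_n := -\Delta y_n - \xi$, and verify $y_n = S(u_n)$. One small caveat: your splitting $\dual{\xi}{z - y_n} = \dual{\xi}{z - \psi} - \dual{\xi}{y_n - \psi}$ and the appeal to \cref{lem:fine_support}~(iii) for $y_n - \psi$ are only literally valid under the first hypothesis, since in the case $\psi < 0$ on $\partial\Omega$ one has $\psi \notin H_0^1(\Omega)$ and hence $y_n - \psi \notin H_0^1(\Omega)^+$; the paper sidesteps this by observing directly that $z - y_n \ge 0$ q.e.\ on $\fsupp(\xi) \subset_q \{y_n = \psi\}$, so that $\dual{\xi}{z - y_n} = \int_\Omega (z - y_n)\,\d\xi \ge 0$ via \cref{lem:fine_support}~(ii).
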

\begin{proof}
	Our strategy is to define $y_n$
	with the desired properties
	and to verify afterwards that $y_n$
	solves the obstacle problem with right-hand side $u_n := -\Delta y_n - \xi$.

	In the case that $\psi \in H_0^1(\Omega)$,
	we define
	$y_n := \max( y - 1/n , \psi )$.
	It is immediate that $y_n \in H_0^1(\Omega)$,
	$y_n \to y$ in $H_0^1(\Omega)$ and
	$y_n = \psi$ on $\{y < \psi + 1/n\}$.

	In the case that $\psi < 0$ on $\partial\Omega$,
	we have $\psi \le c$ on $\partial\Omega$ for some constant $c < 0$.
	From $y = 0$ on $\partial\Omega$,
	we find that the set $\{y = \psi\}$ has a positive distance
	to the boundary of $\Omega$.
	Thus, there exists a function $\varphi \in C_c^\infty(\Omega)$
	with $0 \le \varphi \le 1$ and $\varphi = 1$ on $\{y = \psi\}$.
	Now, we set
	$y_n := \max( y - \varphi/n, \psi )$.
	Again, we find
	$y_n \in H_0^1(\Omega)$,
	$y_n \to y$ in $H_0^1(\Omega)$ and
	$y_n = \psi$ on $\{y < \psi + 1/n\}$.

	Finally, we define $u_n := -\Delta y_n - \xi$.
	It is immediate that $u_n \to u$ in $H^{-1}(\Omega)$
	and we have to check that $y_n = S(u_n)$.
	The property $y_n \in K$ is immediate from the definition.
	From
	$\fsupp(\xi) =_q A_s(u) \subset_q \{y = \psi\} \subset \{y_n = \psi\}$,
	we infer
	$\fsupp(\xi) \subset_q \{z \ge y_n\}$ for all $z \in K$.
	Hence,
	\begin{equation*}
		\dual{-\Delta y_n - u_n}{z - y_n}
		=
		\dual{\xi}{z - y_n}
		=
		\int_\Omega (z - y_n) \, \d\xi
		\ge
		0.
	\end{equation*}
	This shows that $y_n = S(u_n)$.
\end{proof}
The next result shows that we can approximate solution operators
associated to Radon measures.
\begin{lemma}
	\label{lem:radon_measures_in_derivative}
	Let $u \in H^{-1}(\Omega)$ be given such that the assumptions of \cref{lem:approximation_of_un}
	are satisfied.
	Then, for every Radon measure $\mu \in \MM_0(\Omega)$
	with $\mu(I(u)) = 0$,
	the measure
	$\lambda = \mu + \infty_{A_s(u)}$
	satisfies
	\begin{equation*}
		L_{\lambda}
		\in
		\pBsw S(u).
	\end{equation*}
\end{lemma}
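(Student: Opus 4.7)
The plan is to approximate $\lambda$ by capacitary measures of the form $\infty_{\Omega\setminus\hat\Omega_k}$ associated to quasi-open sets $\hat\Omega_k$ that satisfy the sandwich condition $I(u)\subset_q \hat\Omega_k\subset_q \Omega\setminus A_s(u)$ from \cref{thm:characterization_pBss}, and then close under WOT using \cref{prop:properties_derivatives}(iii). That way I reduce the construction to the case of solution operators of quasi-open domains, which is already handled.

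Step~1 (the main technical work). Write $\lambda=\mu+\infty_{A_s(u)}$ and approximate the Radon measure $\mu$ in the $\gamma$-sense by measures $\infty_{\Omega\setminus O_k}$ via an explicit DalMasoMalusa-type construction (the one underlying \cref{lem:quasi_open_sets_dense}). That construction removes small balls from $\Omega$ whose capacities encode the local density of $\mu$. Since $\mu$ is a \emph{Radon} measure with $\mu(I(u))=0$, no removals are required inside the quasi-open set $I(u)$, and the construction can be arranged so that $I(u)\subset_q O_k$. The regularity hypotheses of \cref{lem:approximation_of_un} enter precisely here: with $y\in C_0(\Omega)$ and $\psi\in C(\bar\Omega)$, the active set $A(u)=\{y=\psi\}$ is closed and the sets $\{y<\psi+1/n\}$ provide a decreasing family of open neighborhoods of $A(u)\supset A_s(u)$, giving the clean geometric setting needed for the ball placement argument and ensuring that the balls can be chosen to avoid $A_s(u)$.

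Step~2. Set $\hat\Omega_k:=O_k\setminus A_s(u)$; since $A_s(u)$ is quasi-closed, $\hat\Omega_k$ is quasi-open and satisfies $I(u)\subset_q\hat\Omega_k\subset_q\Omega\setminus A_s(u)$. Apply \cref{thm:gamma_limit_of_sum_of_measures} to the $\gamma$-convergent sequence $\infty_{\Omega\setminus O_k}\togamma\mu$ and the constant sequence $\infty_{A_s(u)}\togamma\infty_{A_s(u)}$ to obtain
\begin{equation*}
   \infty_{\Omega\setminus\hat\Omega_k}
   \;=\;\infty_{\Omega\setminus O_k}+\infty_{A_s(u)}
   \;\togamma\;\mu+\infty_{A_s(u)}\;=\;\lambda,
\end{equation*}
i.e.\ $L_{\hat\Omega_k}\toWOT L_\lambda$.

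Step~3. By the sandwich condition and \cref{thm:characterization_pBss},
$L_{\hat\Omega_k}\in\pBss S(u)\subset\pBsw S(u)$ for each $k$. Finally, apply \cref{prop:properties_derivatives}(iii) with the constant sequence $x_k:=u$ and $L_k:=L_{\hat\Omega_k}\in\pBsw S(u)$, which converges to $L_\lambda$ in WOT, to conclude $L_\lambda\in\pBsw S(u)$.

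Main obstacle. All the weight lies in Step~1: producing a DalMasoMalusa-type $\gamma$-approximation of $\mu$ by $\infty_{\Omega\setminus O_k}$ that additionally respects the containment $I(u)\subset_q O_k$. The standard density statement \cref{lem:quasi_open_sets_dense} is abstract; making it ``sandwich-compatible'' requires a constructive version that places the removed balls only where $\mu$ has mass (which, because $\mu$ is Radon with $\mu(I(u))=0$, avoids $I(u)$) while simultaneously keeping them away from $A_s(u)$. This is exactly where the regularity hypotheses from \cref{lem:approximation_of_un} are used, via the open neighborhoods $\{y<\psi+1/n\}$ of the closed active set $A(u)$.
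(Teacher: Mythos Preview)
Your overall strategy and Steps~2--3 are essentially the paper's argument, but Step~1 has a genuine gap, and the way you invoke the regularity hypotheses is not quite right.

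The Dal~Maso--Malusa construction does not give $E_m=\Omega\setminus O_m\subset\supp(\mu)$; it only gives $E_m\subset\supp(\mu)+B_{1/m}$. The removed balls have positive radius and will leak into $I(u)$ whenever $\supp(\mu)$ touches $\partial A(u)$. Worse, if $A(u)$ has empty interior (which the regularity hypotheses do \emph{not} exclude), there is no room to place balls inside $A(u)$ at all, and replacing them by thin subsets of $A(u)$ destroys the capacity scaling that makes $\infty_{E_m}\togamma\mu$ work. So the sandwich condition $I(u)\subset_q O_k$ at $u$ itself cannot be enforced, and your Step~3 invocation of \cref{thm:characterization_pBss} at $u$ fails. (The issue is not ``avoiding $A_s(u)$'' --- that is handled by adding $\infty_{A_s(u)}$ afterwards --- but rather confining the balls to $A(u)$.)

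The fix, and this is what the paper does, is to use the sequence $\{u_n\}$ from \cref{lem:approximation_of_un} as an intermediate layer. Its active sets satisfy $A(u_n)\supset\{y<\psi+1/n\}$, which are open \emph{neighborhoods} of $A(u)\supset\supp(\mu)$, while $A_s(u_n)=_qA_s(u)$. For each fixed $n$, one has $E_m\subset\supp(\mu)+B_{1/m}\subset A(u_n)$ for all $m$ large enough, so $\hat\Omega_m:=\Omega\setminus(E_m\cup A_s(u))$ satisfies the sandwich condition \emph{at $u_n$}, giving $L_{\hat\Omega_m}\in\pBss S(u_n)$. Then \cref{prop:properties_derivatives}(iii) yields $L_\lambda\in\pBsw S(u_n)$ for every $n$, and a second application of \cref{prop:properties_derivatives}(iii), now along $u_n\to u$ with the constant sequence $L_\lambda$, gives $L_\lambda\in\pBsw S(u)$. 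This double limit is essential; it cannot be collapsed to a single one at $u$.
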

\begin{proof}
	Let $\mu$ be a given Radon measure as in the formulation of the lemma.
	We can use the construction of
	\cite[Theorem~2.5]{DalMasoMalusa1995}
	to obtain
	a sequence $\{E_m\}$ of compact subsets of $\Omega$
	with the property that each $E_m$ is contained
	in $\supp(\mu) + B_{1/m}$
	and $\infty_{E_m} \togamma \mu$.
	In particular, for all $n \in \mathbb{N}$,
	$E_m \subset \{y_n = \psi\}$ for $m$ large enough with $y_n = S(u_n)$,
	where the sequence $\{u_n\}$ is given by \cref{lem:approximation_of_un}.

	Now, we consider the sequence
	$\lambda_m := \infty_{E_m} + \infty_{A_s(u)}$.
	By \cref{thm:gamma_limit_of_sum_of_measures},
	we conclude that $\lambda_m \togamma \lambda$ as $m \to \infty$.
	Fix $n \in \mathbb{N}$.
	Then \cref{thm:characterization_pBss}
	implies that $L_{\lambda_m} \in \pBss S(u_n)$
	for all but finitely many $m \in \mathbb{N}$.
	Thus, the set inclusion $\pBss S(u) \subset \pBsw S(u)$ and 
	property (iii) from \cref{prop:properties_derivatives}
	imply that $L_\lambda \in \pBsw S(u_n)$ for all $n \in \mathbb{N}$.
	Applying \cref{prop:properties_derivatives} once more,
	we obtain that $L_\lambda \in \pBsw S(u)$
	and the claim follows.
\end{proof}

Now, we are able to give the main result of this section.
\begin{theorem}
	\label{thm:equality_for_pBsw}
	Let $u \in H^{-1}(\Omega)$ be given such that the assumptions of \cref{lem:approximation_of_un}
	are satisfied.
	Then, \eqref{est:upper_estimate_pBsw} holds with equality, i.e.,
	\begin{equation}
		\label{est:equality_pBsw}
		\pBsw S(u)
		=
		\{
			L_\mu
			\mid
			\mu \in \MM_0(\Omega),
			\mu( I(u) ) = 0
			\text{ and }
			\mu = +\infty \text{ on } A_s(u)
		\}.
	\end{equation}
\end{theorem}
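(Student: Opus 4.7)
\textbf{Proof plan for \cref{thm:equality_for_pBsw}.} The inclusion ``$\subset$'' is already furnished by \cref{lem:upper_estimate_pBsw}, so the entire work lies in proving the reverse inclusion. Fix $\mu \in \MM_0(\Omega)$ satisfying $\mu(I(u))=0$ and $\mu = +\infty$ on $A_s(u)$. By \cref{lem:mu_infty_A_s}, the latter is equivalent to $\mu \ge \infty_{A_s(u)}$, which in turn implies $\mu + \infty_{A_s(u)} = \mu$ as capacitary measures. The plan is to approximate $\mu$ by measures of the form $\mu_n + \infty_{A_s(u)}$ where each $\mu_n$ is Radon and where $L_{\mu_n+\infty_{A_s(u)}}$ already lies in $\pBsw S(u)$, and then to close off using the WOT-closedness result from \cref{prop:properties_derivatives}.

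The first step is to apply \cref{lem:approximation_Radon} to obtain an increasing sequence of Radon measures $\{\mu_n\} \subset \MM_0(\Omega)$ with $\mu_n \le \mu$ and $\mu_n \togamma \mu$. The crucial point is monotonicity: from $\mu_n \le \mu$ and the assumption $\mu(I(u))=0$, we immediately get $\mu_n(I(u)) = 0$. Hence the full hypotheses of \cref{lem:radon_measures_in_derivative} are met (the regularity assumptions on $y$ and $\psi$ carry over from those in the theorem), and we conclude $L_{\mu_n + \infty_{A_s(u)}} \in \pBsw S(u)$ for every $n$.

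Next, I verify that these operators converge to $L_\mu$ in the WOT. The trivial constant sequence $C_n := A_s(u)$ satisfies $\infty_{C_n} \togamma \infty_{A_s(u)}$, so that \cref{thm:gamma_limit_of_sum_of_measures} applies and yields
\begin{equation*}
	\mu_n + \infty_{A_s(u)} \togamma \mu + \infty_{A_s(u)} = \mu,
\end{equation*}
where the last equality uses $\mu \ge \infty_{A_s(u)}$. By definition of $\gamma$-convergence this means $L_{\mu_n + \infty_{A_s(u)}} \toWOT L_\mu$ in $\LL(H^{-1}(\Omega), H_0^1(\Omega))$. Finally, applying \cref{prop:properties_derivatives}~(iii) to the constant sequence $x_n \equiv u$ and the operators $L_n := L_{\mu_n + \infty_{A_s(u)}} \in \pBsw S(u)$, we conclude that the WOT-limit $L_\mu$ belongs to $\pBsw S(u)$, which is exactly the desired inclusion.

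The delicate ingredients that make this argument work are not inside the present proof but in the technical results already assembled: the monotonicity $\mu_n \le \mu$ in the Radon approximation (which is what preserves $\mu_n(I(u)) = 0$), \cref{lem:radon_measures_in_derivative} (which forces the assumptions of \cref{lem:approximation_of_un} and thereby explains the regularity hypotheses in the statement), and \cref{thm:gamma_limit_of_sum_of_measures} (which guarantees that adding the singular piece $\infty_{A_s(u)}$ is compatible with $\gamma$-limits). Once these are in place, the proof reduces to a short closure argument, so I expect no substantial further obstacle.
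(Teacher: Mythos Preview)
Your proof is correct and follows essentially the same route as the paper's own argument: approximate $\mu$ by an increasing sequence of Radon measures via \cref{lem:approximation_Radon}, use the monotonicity $\mu_n\le\mu$ to preserve $\mu_n(I(u))=0$, invoke \cref{lem:radon_measures_in_derivative} to place $L_{\mu_n+\infty_{A_s(u)}}$ in $\pBsw S(u)$, apply \cref{thm:gamma_limit_of_sum_of_measures} to obtain $\gamma$-convergence to $\mu$, and close with \cref{prop:properties_derivatives}~(iii). The only cosmetic difference is that you make the identity $\mu+\infty_{A_s(u)}=\mu$ explicit via \cref{lem:mu_infty_A_s}, whereas the paper leaves this implicit.
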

\begin{proof}
%
	Let $\mu \in \MM_0(\Omega)$ 
	with $\mu(I(u))=0$ and $\mu=\infty$ on $A_s(u)$.
	By \cref{lem:approximation_Radon} 
	we find an increasing sequence $\{\mu_m\}$ of Radon measures with $\mu_m \togamma \mu$.
	Since $\mu_m\le \mu$, it holds $\mu_m(I(u))=0$.
	Thus, by \cref{lem:radon_measures_in_derivative},
	the measure $\lambda_{m}:=\mu_m+\infty_{A_s(u)}$ satisfies
	\begin{equation*}
		L_{\lambda_{m}}\in \pBsw S(u).
	\end{equation*}
	 Furthermore,
	 \cref{thm:gamma_limit_of_sum_of_measures} implies that
	 $\lambda_{m}\togamma \mu$ as $m \rightarrow \infty$.
	The closedness property of $\pBsw S$, 
	see \cref{prop:properties_derivatives}, 
	implies that $L_\mu \in \pBsw S(u)$.
\end{proof}

\section{The weak-weak generalized derivative}
\label{sec:weak_weak_generalized_derivative}
By means of an example,
we show that $\pBww S(u)$
can be surprisingly large.
In fact, we have seen that for
a Gâteaux point
$u \in D_S$
we have
$\pBss S(u) = \pBws S(u) = \pBsw S(u) = \{ S'(u) \}$,
see
\cref{thm:characterization_pBss,lem:upper_estimate_pBsw,thm:characterization_pBws}.
However, we will see that $\pBww S(u)$ might not be a singleton for $u \in D_S$.

We use the classical construction of \cite{CioranescuMurat1997}.
Therein, the authors construct a sequence
$\Omega_n$ of open subsets of $\Omega$
such that
the solution operators $L_{\Omega_n}$
of
\begin{equation*}
	-\Delta y_n = f \quad \text{in } \Omega_n
\end{equation*}
converge in WOT
to the solution operator
$L_c$
of
\begin{equation*}
	-\Delta y  + c \, y = f \quad \text{in } \Omega
\end{equation*}
for a positive constant $c > 0$.
We define $y = L_c 1$ and $y_n = L_{\Omega_n} 1$.
This yields $y_n \weakly y$.
We fix the obstacle
$\psi := 0$
and set
$u_n := -\Delta y_n - 2^{-n} \, \chi_{\Omega \setminus \Omega_n}$,
$u := -\Delta y$.
Then, it is clear that $y = S(u)$, $y_n = S(u_n)$ and $u_n \weakly u$.
Since $A(u) =_q \emptyset$, we have $u \in D_S$.
Similarly, we have
$A(u_n) =_q \{y_n = 0\} =_q \Omega \setminus \Omega_n$.
From
$\xi_n := -\Delta y_n - u_n = 2^{-n} \, \chi_{\Omega \setminus \Omega_n}$,
we have
$A_s(u_n) =_q \fsupp(\xi_n) =_q \Omega \setminus \Omega_n$,
since $\Omega \setminus \Omega_n$ is a finite union of balls (by construction).
Thus, $u_n \in D_S$
and $S'(u_n) = L_{\Omega_n}$.
By construction,
$L_{\Omega_n} \toWOT L_c$.
Hence,
$L_c \in \pBww S(u)$
although $u \in D_S$.

\section{Stationarity systems for the optimal control of the obstacle problem}
\label{sec:stationarity_systems}

In this section, we consider the optimal control of the obstacle problem
with control constraints
\begin{equation}
	\label{eq:optimal_control}
	\text{Minimize } J(y,u) \; \text{ with } y = S(u)
	\text{ and } u \in \Uad.
\end{equation}
Here,
$J : H_0^1(\Omega) \times L^2(\Omega) \to \R$ is given.
We assume that $J$ is Fréchet differentiable
with partial derivatives $J_y$ and $J_u$.
The admissible set $\Uad \subset L^2(\Omega)$ is assumed to be closed and convex.
We denote by $(y,u) \in H_0^1(\Omega) \times \Uad$ a local minimizer of \eqref{eq:optimal_control}.
A formal calculation leads to the stationarity systems
\begin{equation}
	\label{eq:opt_con_pBss}
	0 \in L\adjoint J_y(y,u) + J_u(y,u) + \NNUad(u)
	\qquad
	\text{for some } L \in \pBss S(u)
\end{equation}
and
\begin{equation}
	\label{eq:opt_con_pBws}
	0 \in L\adjoint J_y(y,u) + J_u(y,u) + \NNUad(u)
	\qquad
	\text{for some } L \in \pBsw S(u),
\end{equation}
where $\NNUad(u)$ denotes the normal cone (in the sense of convex analysis)
of $\Uad$ at $u$.
The goal of this section is the interpretation
of these systems
and a comparison with known optimality systems for \eqref{eq:optimal_control}.
For the discussion of \eqref{eq:opt_con_pBws}, we will assume that the characterization
\eqref{est:equality_pBsw} holds.
Recall that this is the case if $y$ and $\psi$ feature some additional regularity,
see \cref{thm:equality_for_pBsw}.

At this point, it is not clear whether any of these
stationarity conditions is necessary for local optimality.
If we would have defined the solution operator $S$
from $L^2(\Omega)$ to $H_0^1(\Omega)$,
then \eqref{eq:opt_con_pBws} would imply
that $0$ belongs to the sum of the Bouligand subdifferential
of the reduced objective $j(u) := J(S(u),u)$
at the point $u$
and the normal cone of $\Uad$ at $u$,
see the discussion in \cite[Section~4.2]{ChristofClasonMeyerWalther2017}.
However, the derivation of the generalized derivatives
for $S : L^2(\Omega) \to H_0^1(\Omega)$
is much more difficult
and postponed to future work.

We start by the interpretation of \eqref{eq:opt_con_pBss}.
\begin{lemma}
	\label{lem:opt_con_pBss}
	The condition \eqref{eq:opt_con_pBss}
	is equivalent to the existence of a quasi-closed set
	$A$ with $A_s(u) \subset_q A \subset_q A(u)$
	and of $p \in H_0^1(\Omega)$, $\nu \in H^{-1}(\Omega)$,
	$\lambda \in \NNUad(u)$
	such that
	\begin{align*}
		p + J_u(y,u) + \lambda&= 0,
		&
		p &\in H_0^1(\Omega \setminus A),
		\\
		-\Delta p + \nu &= J_y(y,u),
		&
		\nu & \in H^{-1}(\Omega)
		\text{ with }
		\dual{\nu}{v} = 0 \text{ for all }
		v \in H_0^1(\Omega \setminus A)
	\end{align*}
	hold.
\end{lemma}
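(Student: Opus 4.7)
The plan is to translate \eqref{eq:opt_con_pBss} directly by using \cref{thm:characterization_pBss}, which tells us exactly what the elements $L \in \pBss S(u)$ look like. Every such $L$ has the form $L = L_{\hat\Omega}$ for some quasi-open $\hat\Omega$ sandwiched between $I(u)$ and $\Omega \setminus A_s(u)$, so setting $A := \Omega \setminus \hat\Omega$ produces a quasi-closed set with $A_s(u) \subset_q A \subset_q A(u)$. The passage $L \leftrightarrow A$ is therefore a bijection between $\pBss S(u)$ and the set of admissible quasi-closed sets in the statement, and the lemma reduces to identifying the adjoint $L_{\hat\Omega}\adjoint$ and unpacking the resulting inclusion.

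The key computation will be that $L_{\hat\Omega}$ is self-adjoint as an operator $H^{-1}(\Omega) \to H_0^1(\Omega)$ (under the standard identifications). Given $f, g \in H^{-1}(\Omega)$ and the solutions $y = L_{\hat\Omega} f, z = L_{\hat\Omega} g \in H_0^1(\hat\Omega)$, the variational equality $\dual{-\Delta y - f}{z} = 0$ (legal because $z \in H_0^1(\hat\Omega)$) combined with the symmetry of $(y,z) \mapsto \int_\Omega \nabla y \cdot \nabla z \, \dx$ gives $\dual{f}{L_{\hat\Omega} g} = \dual{g}{L_{\hat\Omega} f}$, which is precisely $L_{\hat\Omega}\adjoint = L_{\hat\Omega}$. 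Hence $p := L\adjoint J_y(y,u) = L_{\hat\Omega} J_y(y,u)$ is, by definition of $L_{\hat\Omega}$, the unique element of $H_0^1(\Omega \setminus A)$ satisfying $\dual{-\Delta p - J_y(y,u)}{v} = 0$ for all $v \in H_0^1(\Omega \setminus A)$. Setting $\nu := J_y(y,u) + \Delta p \in H^{-1}(\Omega)$, the identity $-\Delta p + \nu = J_y(y,u)$ is tautological, and the variational equation for $p$ rewrites as $\dual{\nu}{v} = 0$ for all $v \in H_0^1(\Omega \setminus A)$.

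With $p$ identified, \eqref{eq:opt_con_pBss} asserts the existence of $\lambda \in \NNUad(u)$ such that $p + J_u(y,u) + \lambda = 0$, which is the first equation in the claim. For the converse, we start with a quasi-closed set $A$ and functions $p, \nu, \lambda$ as in the statement; the conditions on $p$ and $\nu$ say exactly that $p = L_{\hat\Omega} J_y(y,u)$ with $\hat\Omega = \Omega \setminus A$, so $p = L\adjoint J_y(y,u)$ for $L = L_{\hat\Omega} \in \pBss S(u)$ (here the inclusions on $A$ guarantee membership via \cref{thm:characterization_pBss}), and the equation $p + J_u(y,u) + \lambda = 0$ with $\lambda \in \NNUad(u)$ yields \eqref{eq:opt_con_pBss}.

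There is essentially no hard obstacle: the only delicate point is the bookkeeping of dualities in verifying $L_{\hat\Omega}\adjoint = L_{\hat\Omega}$ and in passing between the condition ``$p = L_{\hat\Omega} J_y(y,u)$'' and its pointwise-variational form involving $\nu$. Both are direct applications of the definition \eqref{weak_formulation_relaxed_BVP} with $\mu = \infty_{\Omega \setminus \hat\Omega}$, so the entire proof is a routine unwinding of \cref{thm:characterization_pBss} together with this self-adjointness observation.
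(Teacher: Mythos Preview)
Your proposal is correct and follows essentially the same approach as the paper: both invoke \cref{thm:characterization_pBss} to write $L = L_{\Omega\setminus A}$, set $p := L\adjoint J_y(y,u) = L J_y(y,u)$, $\nu := J_y(y,u) + \Delta p$, $\lambda := -p - J_u(y,u)$, and note that the converse is analogous. Your write-up is more explicit about the self-adjointness $L_{\hat\Omega}\adjoint = L_{\hat\Omega}$ and about the converse direction, but the argument is the same.
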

\begin{proof}
	Let \eqref{eq:opt_con_pBss} be satisfied with some
	$L \in \pBss S(u)$.
	By \cref{thm:characterization_pBss},
	there exists a quasi-closed set $A$
	with $A_s(u) \subset_q A \subset_q A(u)$
	and
	$L = L_{\Omega \setminus A}$.
	Then, it is clear that
	$p := L\adjoint J_y(y,u) = L J_y(y,u)$,
	$\nu := J_y(y,u) + \Delta p$
	and
	$\lambda := -p - J_u(y,u)$
	satisfy the above system.

	The converse direction follows similarly.
\end{proof}
We note that the condition of \cref{lem:opt_con_pBss}
is a rather restrictive version of the system of M-stationarity
in \cite[Section~1.4]{Wachsmuth2014:2},
just use $\hat A_s :=_q A \setminus A_s(u)$, $\hat B :=_q \emptyset$
and $\hat A :=_q A(u) \setminus A$ therein.

The interpretation of \eqref{eq:opt_con_pBws} is much more challenging
and interesting.
\begin{lemma}
	\label{lem:opt_con_pBsw}
	The condition \eqref{eq:opt_con_pBws}
	implies
	the existence of
	$p \in H_0^1(\Omega)$, $\nu \in H^{-1}(\Omega)$, $\lambda \in \NNUad(u)$
	such that
	\begin{subequations}
		\label{eq:c_stationarity}
		\begin{align}
			\label{eq:c_stationarity_1}
			p + J_u(y,u) + \lambda &= 0
			&
			p &\in H_0^1(\Omega \setminus A_s(u))
			\\
			\label{eq:c_stationarity_2}
			-\Delta p + \nu &= J_y(y,u)
			&
			\nu & \in H^{-1}(\Omega)
			\text{ with }
			\dual{\nu}{v} = 0 \;\forall
			v \in H_0^1(\Omega \setminus A(u))
			\\
			\label{eq:c_stationarity_3}
			\dual{\nu}{p \, \varphi}
			&\ge 0
			\mathrlap{\qquad
			\forall \varphi \in W^{1,\infty}(\Omega)^+.}
		\end{align}
	\end{subequations}

	Conversely, if this system holds, if \eqref{est:equality_pBsw} holds
	and if there exists $\mu \in \MM_0(\Omega)$
	such that $\nu = p \, \mu$ in the sense
	$p \in L^2_\mu(\Omega)$ and 
	\begin{equation*}
		\dual{\nu}{w}
		=
		\int_\Omega p \, w \, \d\mu
		\qquad
		\forall w \in H_0^1(\Omega) \cap L^2_\mu(\Omega),
	\end{equation*}
	then \eqref{eq:opt_con_pBws} is satisfied.
\end{lemma}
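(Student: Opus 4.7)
The forward direction uses \cref{lem:upper_estimate_pBsw}: writing $L = L_\mu$ with $\mu \in \MM_0(\Omega)$, $\mu(I(u)) = 0$ and $\mu = +\infty$ on $A_s(u)$, I set $p := L_\mu J_y(y,u)$, $\nu := J_y(y,u) + \Delta p$ and $\lambda := -p - J_u(y,u)$. The symmetry of the bilinear form in \eqref{weak_formulation_relaxed_BVP} makes $L_\mu$ self-adjoint, so $L\adjoint J_y(y,u) = p$ and $\lambda \in \NNUad(u)$ is exactly \eqref{eq:opt_con_pBws}. The weak formulation rewrites as $\nu = p\mu$, i.e.,
\begin{equation*}
    \dual{\nu}{w} = \int_\Omega p\,w\,\d\mu \qquad \forall w \in H_0^1(\Omega) \cap L^2_\mu(\Omega).
\end{equation*}
\cref{lem:mu_infty_A_s} gives $p = 0$ q.e.\ on $A_s(u)$, which is \eqref{eq:c_stationarity_1}. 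For $v \in H_0^1(I(u))$, the condition $\mu(I(u)) = 0$ forces $v \in L^2_\mu(\Omega)$, and $\dual{\nu}{v} = \int_\Omega p\,v\,\d\mu = 0$ since $v = 0$ q.e.\ on $A(u)$ while $\mu$ does not charge $I(u)$; this proves \eqref{eq:c_stationarity_2}. Finally, $p\varphi \in H_0^1(\Omega) \cap L^2_\mu(\Omega)$ for $\varphi \in W^{1,\infty}(\Omega)^+$, and $\dual{\nu}{p\varphi} = \int_\Omega p^2\,\varphi\,\d\mu \ge 0$ delivers \eqref{eq:c_stationarity_3}.

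For the converse, given $p, \nu, \lambda$ satisfying \eqref{eq:c_stationarity} and $\mu \in \MM_0(\Omega)$ with $\nu = p\mu$, the strategy is to construct a capacitary measure $\tilde\mu$ meeting the three properties in \eqref{est:equality_pBsw} and satisfying $p = L_{\tilde\mu} J_y(y,u)$. Then $L := L_{\tilde\mu} \in \pBsw S(u)$ by \eqref{est:equality_pBsw} and \eqref{eq:opt_con_pBws} follows from \eqref{eq:c_stationarity_1}. The natural candidate is
\begin{equation*}
    \tilde\mu(B) := \mu\bigl(B \cap (\Omega \setminus I(u) \setminus A_s(u))\bigr) + \infty_{A_s(u)}(B),
    \qquad B \subset \Omega \text{ Borel},
\end{equation*}
i.e., $\mu$ restricted to $A(u) \setminus A_s(u)$ augmented by the infinity measure on $A_s(u)$. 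By construction $\tilde\mu(I(u)) = 0$ and $\tilde\mu = +\infty$ on $A_s(u)$; the membership $\tilde\mu \in \MM_0(\Omega)$ has to be verified using the quasi-openness of $I(u)$, the quasi-closedness of $A_s(u)$ and the regularity of $\mu$.

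To verify $p = L_{\tilde\mu} J_y(y,u)$, take $w \in H_0^1(\Omega) \cap L^2_{\tilde\mu}(\Omega)$; then $w = 0$ q.e.\ on $A_s(u)$, and combined with $p = 0$ q.e.\ on $A_s(u)$ from \eqref{eq:c_stationarity_1},
\begin{equation*}
    \int_\Omega p\,w\,\d\tilde\mu = \int_{A(u)\setminus A_s(u)} p\,w\,\d\mu,
    \qquad
    \dual{\nu}{w} = \int_{I(u)} p\,w\,\d\mu + \int_{A(u)\setminus A_s(u)} p\,w\,\d\mu,
\end{equation*}
so the proof reduces to showing $\int_{I(u)} p\,w\,\d\mu = 0$. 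This is the main obstacle: the representation $\nu = p\mu$ is only given against test functions in $H_0^1(\Omega) \cap L^2_\mu(\Omega)$, whereas \eqref{eq:c_stationarity_2} supplies the vanishing of $\dual{\nu}{\cdot}$ only on $H_0^1(I(u))$. The plan is to upgrade \eqref{eq:c_stationarity_2} to the statement that $p = 0$ $\mu$-a.e.\ on $I(u)$, by approximating $p$ inside $I(u)$ through truncation and cutoff against the torsion function of $I(u)$ (cf.\ \cref{thm:torsion_function_properties}), producing test functions in $H_0^1(I(u)) \cap L^2_\mu(\Omega)$ via \cref{lem:quasi-covering}. Once $p$ vanishes $\mu$-a.e.\ on $I(u)$, the integrand $p\,w$ is $\mu$-a.e.\ zero there, the two displayed integrals coincide, and $p = L_{\tilde\mu} J_y(y,u)$ follows.
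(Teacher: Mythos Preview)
Your forward direction matches the paper's and is fine.

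The converse direction has a genuine gap: the measure $\tilde\mu$ you propose is too small, and in general $p \ne L_{\tilde\mu} J_y(y,u)$.  The problem is the displayed identity
\[
	\dual{\nu}{w} = \int_{I(u)} p\,w\,\d\mu + \int_{A(u)\setminus A_s(u)} p\,w\,\d\mu
\]
for $w \in H_0^1(\Omega)\cap L^2_{\tilde\mu}(\Omega)$.  This is only furnished by the hypothesis $\nu = p\,\mu$ when $w \in L^2_\mu(\Omega)$, and $L^2_{\tilde\mu}(\Omega)$ can be strictly larger than $L^2_\mu(\Omega)$ (your $\tilde\mu$ is zero on $I(u)$, whereas $\mu$ need not be).  Establishing $p=0$ $\mu$-a.e.\ on $I(u)$ does \emph{not} repair this: knowing that the right-hand side vanishes on $I(u)$ says nothing about the left-hand side for test functions outside $L^2_\mu(\Omega)$.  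Concretely, take $A_s(u)=_q\emptyset$, $A(u)$ a closed ball and $\mu=\infty_{I(u)}$.  Then $p\in L^2_\mu(\Omega)$ forces $p=0$ q.e.\ on $I(u)$ and, by quasi-continuity, q.e.\ on $\partial A(u)$; the hypothesis $\nu=p\,\mu$ only says $\dual{\nu}{w}=0$ for $w$ vanishing q.e.\ on $I(u)$, so $\nu$ can be, e.g., the surface measure on $\partial A(u)$.  Your $\tilde\mu$ is then the zero measure, $L_{\tilde\mu}=(-\Delta)^{-1}$, and $L_{\tilde\mu}J_y(y,u)=p+(-\Delta)^{-1}\nu\ne p$.

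The paper avoids this by first enlarging $\mu$ to $\mu_1:=\mu+\infty_{\{p=0\}\setminus I(u)}$ (adding $+\infty$ on \emph{all} of $\{p=0\}\cap A(u)$, not merely on $A_s(u)$) and only then restricting to $A(u)$ to obtain $\mu_2$.  This extra $+\infty$ forces every $v\in H_0^1(\Omega)\cap L^2_{\mu_2}(\Omega)$ to lie in $H_0^1\bigl(\{w_{\mu_1}>0\}\cup I(u)\bigr)$, whence \cref{lem:union_of_domains,lem:completion_h01_l2} yield an approximation of $v$ by sums of functions from $H_0^1(\Omega)\cap L^2_{\mu_1}(\Omega)$ and $H_0^1(I(u))$.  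Since $K(w):=\dual{\nu}{w}-\int_\Omega p\,w\,\d\mu_2$ is checked to vanish on both summands separately (the second by \eqref{eq:c_stationarity_2}, the first by the truncation trick you sketched, which gives $\mu_1(I(u)\cap\{p\ne0\})=0$), continuity of $K$ on $H_0^1(\Omega)\cap L^2_{\mu_2}(\Omega)$ finishes the argument.  Both the choice of $\mu_1$ and this density step are essential and are missing from your sketch.
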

\begin{proof}
	``\eqref{eq:opt_con_pBws}$\Rightarrow$\eqref{eq:c_stationarity}'':
	Let \eqref{eq:opt_con_pBws}
	be satisfied by some $L \in \pBsw S(u)$.
	From \eqref{est:upper_estimate_pBsw},
	there is $\mu \in \MM_0(\Omega)$
	with
	$L = L_\mu$,
	$\mu(I(u)) = 0$ and $\mu = +\infty$ on $A_s(u)$.
	We define
	$p := L\adjoint J_y(y,u) = L J_y(y,u)$,
	$\nu := J_y(y,u) + \Delta p$
	and $\lambda := -p - J_u(y,u)$.
	Then, $p = 0$ q.e.\ on $A_s(u)$, i.e., $p \in H_0^1(\Omega \setminus A_s(u))$.

	By definition of $\nu$ and $p$, we have
	\begin{equation*}
		\nu = J_y(y,u) + \Delta p = p \, \mu.
	\end{equation*}
	For $v \in H_0^1(\Omega \setminus A(u)) = H_0^1(I(u))$,
	we have $v \in L^2_\mu(\Omega)$ and we get
	\begin{equation*}
		\dual{\nu}{v}
		=
		\int_\Omega p \, v \, \d\mu
		=
		0
	\end{equation*}
	since $\mu = 0$ on $I(u)$ and $v$ lives only on $I(u)$.

	It remains to show $\dual{\nu}{p \, \varphi} = 0$
	for all $\varphi \in W^{1,\infty}(\Omega)^+$.
	We have $p \, \varphi \in H_0^1(\Omega)$
	and the pointwise boundedness of $\varphi$ gives $p \, \varphi \in L^2_\mu(\Omega)$.
	Thus,
	\begin{equation*}
		\dual{\nu}{p \, \varphi}
		=
		\int_\Omega p^2 \, \varphi \, \d\mu
		\ge
		0.
	\end{equation*}
	This shows that the above system is satisfied by $p$ and $\nu$.

	``\eqref{eq:c_stationarity}$\Rightarrow$\eqref{eq:opt_con_pBws}'':
	To prove the converse direction,
	let $p$, $\nu$, $\lambda$ and $\mu$ be given as in the assertion of the lemma.
	We will modify $\mu$ to construct
	another measure
	$\mu_2 \in \MM_0(\Omega)$, which satisfies the conditions on the right-hand side of \eqref{est:equality_pBsw},
	that is, $\mu_2(I(u)) = 0$ and $\mu_2 = +\infty$ on $A_s(u)$.
	First, we will set the measure to $+\infty$ in $A_s(u)$.
	Since $\{p = 0\} \setminus I(u) \supset_q A_s(u)$,
	we define
	$\mu_1 := \mu + \infty_{\{p = 0\} \setminus I(u)}$.
	We check that $\nu = p \, \mu_1$.
	Obviously, $p \in L^2_{\mu_1}(\Omega)$ since $p = 0$ q.e.\ on $\{p = 0\} \setminus I(u)$.
	Furthermore, for $w \in H_0^1(\Omega) \cap L^2_{\mu_1}(\Omega)$,
	we have $w \in L^2_\mu(\Omega)$, thus
	\begin{equation}
		\label{eq:equality_mu1}
		\dual{\nu}{w}
		=
		\int_\Omega p \, w \, \d\mu
		=
		\int_\Omega p \, w \, \d\mu_1
	\end{equation}
	since $\int_\Omega p \, w \, \d\infty_{\{p = 0\} \setminus I(u)} = 0$.

	Next, we define the Borel measure
	$\mu_2(B) := \mu_1(B \setminus I(u))$.
	Then, $\mu_2(I(u)) = 0$.
	It remains to show that we still have $\nu = p \, \mu_2$.
	The condition $p \in L^2_{\mu_2}(\Omega)$ is clear.

	We use \cref{lem:union_of_domains,lem:completion_h01_l2} to obtain
	\begin{equation}
		\label{eq:density_of_some_sobolev_spaces}
		\begin{aligned}
			\overline{
				H_0^1(\Omega) \cap L^2_{\mu_1}(\Omega)
				+
				H_0^1(I(u))
			}^{H_0^1(\Omega)}
			&=
			\overline{
				\overline{
					H_0^1(\Omega) \cap L^2_{\mu_1}(\Omega)
				}^{H_0^1(\Omega)}
				+
				H_0^1(I(u))
			}^{H_0^1(\Omega)}
			\\
			&=
			\overline{
				H_0^1(\{w_{\mu_1} > 0\})
				+
				H_0^1(I(u))
			}^{H_0^1(\Omega)}
			\\
			&=
			H_0^1\bigh(){ \{w_{\mu_1} > 0\} \cup I(u) }
			.
		\end{aligned}
	\end{equation}
	For all Borel sets $B \subset \Omega$
	with
	$\capa\bigh(){ B \cap \{p = 0\} \setminus I(u)} > 0$,
	we have
	\begin{equation*}
		\mu_2( B )
		=
		\mu_1( B \setminus I(u) )
		=
		\mu( B \setminus I(u) )
		+
		\infty_{\{p = 0\} \setminus I(u)}( B \setminus I(u) )
		=
		+\infty
		.
	\end{equation*}
	Thus,
	$w_{\mu_2} = 0$ q.e.\ on $\{p = 0\} \setminus I(u)$.
	By taking complements, this leads to
	\begin{equation*}
		\{w_{\mu_2} > 0\}
		\subset_q
		\Omega \setminus \bigh(){\{p = 0\} \setminus I(u)}
		=_q
		\Omega \setminus \bigh(){\{p = 0\} \cap A(u)}
		=_q
		\{p \ne 0\} \cup I(u)
		.
	\end{equation*}
	Moreover, from $p \in L^2_{\mu_1}(\Omega)$ and \cref{lem:v=0_on_w=0},
	we obtain
	\begin{equation}
		\label{eq:subset_w2}
		\{w_{\mu_2} > 0\}
		\subset_q
		\{w_{\mu_1} > 0 \} \cup I(u)
		.
	\end{equation}

	By combining \eqref{eq:density_of_some_sobolev_spaces},
	\eqref{eq:subset_w2}
	and \cref{lem:v=0_on_w=0},
	we find that every $v \in H_0^1(\Omega)^+ \cap L^2_{\mu_2}(\Omega)$
	belongs to $H_0^1\bigh(){ \{w_{\mu_1} > 0\} \cup I(u) }^+$
	and, therefore,
	there exist sequences
	$\{v^{(1)}_n\}_{n \in \N} \subset H_0^1(\Omega) \cap L^2_{\mu_1}(\Omega)$
	and
	$\{v^{(2)}_n\}_{n \in \N} \subset H_0^1(I(u))$
	with
	$v^{(1)}_n + v^{(2)}_n \to v$ in $H_0^1(\Omega)$.
	Further, from the second assertions of
	\cref{lem:union_of_domains,lem:completion_h01_l2},
	it can be seen that these sequences can be chosen such that additionally
	$0 \le v^{(1)}_n + v^{(2)}_n \le v$ q.e.\ on $\Omega$
	for all $n \in \N$.
	We can extract a subsequence (without relabeling),
	such that
	$v^{(1)}_n + v^{(2)}_n \to v$ pointwise q.e.,
	thus, pointwise $\mu_2$-a.e.
	Since $v \in L^2_{\mu_2}(\Omega)$,
	the dominated convergence theorem implies
	$v^{(1)}_n + v^{(2)}_n \to v$ in $L^2_{\mu_2}(\Omega)$.

	By construction,
	the functional $K : H_0^1(\Omega) \cap L^2_{\mu_2}(\Omega) \to \R$,
	given by
	\begin{equation*}
		K(w)
		:=
		\dual{\nu}{w} - \int_\Omega p \, w \, \d\mu_2
		,
	\end{equation*}
	vanishes on $H_0^1(I(u))$.

	Next, we show that $K$ vanishes also
	on $H_0^1(\Omega) \cap L^2_{\mu_1}(\Omega)$.
	We take $w \in H_0^1(I(u))$ with $0 \le w \le 1$ and $\{w > 0\} =_q I(u)$.
	Then, $\tilde w = \max( \min( w, p ), -w)$
	satisfies $\tilde w \in H_0^1(I(u)) \cap L^2_{\mu_1}(\Omega)$,
	since $\abs{\tilde w} \le \abs{p} \in L^2_{\mu_1}(\Omega)$.
	Hence, \eqref{eq:c_stationarity_2} and \eqref{eq:equality_mu1} imply
	\begin{equation*}
		0 = \dual{\nu}{\tilde w}
		=
		\int_\Omega p \, \tilde w \, \d\mu_1.
	\end{equation*}
	Now, $p \, \tilde w \ge 0$
	and $\{p \, \tilde w > 0\} =_q \{\abs{p} \ne 0\} \cap I(u)$.
	This shows $\mu_1( I(u) \cap \{p \ne 0\} ) = 0$.
	Thus, for arbitrary $w \in H_0^1(\Omega) \cap L^2_{\mu_1}(\Omega)$,
	we have
	\begin{equation*}
		\dual{\nu}{w}
		=
		\int_\Omega p \, w \, \d\mu_1
		=
		\int_{ \Omega \setminus I(u) } p \, w \, \d\mu_1
		=
		\int_{ \Omega \setminus I(u) } p \, w \, \d\mu_2
		=
		\int_\Omega p \, w \, \d\mu_2
		.
	\end{equation*}
	Hence, $K$ vanishes on $H_0^1(\Omega) \cap L^2_{\mu_1}(\Omega)$.

	Further,
	$K$ is linear and continuous w.r.t.\ the space $H_0^1(\Omega) \cap L^2_{\mu_2}(\Omega)$.
	Thus,
	\begin{equation*}
		K(v)
		=
		\lim_{n \to \infty}
		K\Bigh(){
			v^{(1)}_n + v^{(2)}_n
		}
		=
		0.
	\end{equation*}
	Hence, $K$ vanishes on
	$H_0^1(\Omega)^+ \cap L^2_{\mu_2}(\Omega)$
	and, by linearity,
	on the entire space
	$H_0^1(\Omega) \cap L^2_{\mu_2}(\Omega)$.
	This shows $\nu = p \, \mu_2$.

	Now,
	for
	$v \in H_0^1(\Omega) \cap L^2_{\mu_2}(\Omega)$,
	we have
	\begin{equation*}
		\int_\Omega p \, v \, \d\mu_2
		=
		\dual{\nu}{v}
		=
		\dual{J_y(y,u)}{v}
		-
		\int_\Omega \nabla p \, \nabla v \, \d x.
	\end{equation*}
	This shows
	$p = L_{\mu_2} J_y(y,u) = L_{\mu_2}\adjoint J_y(y,u)$.
	Hence, \eqref{eq:opt_con_pBws} is satisfied.
\end{proof}
Some remarks concerning \cref{lem:opt_con_pBsw}
are in order.
Under some regularity assumptions on the data
and on the objective
of the control problem \eqref{eq:optimal_control},
it was shown in
\cite{SchielaWachsmuth2013}
that the system
\eqref{eq:c_stationarity}
is satisfied at every local minimizer,
see also the comparison in \cite[Lemma~4.6]{Wachsmuth2014:2}.

Surprisingly,
the technique of \cite{SchielaWachsmuth2013}
even provides the additional condition
$\nu = p \, \mu$
after closer inspection.
Indeed, (by using the notation of \cite{SchielaWachsmuth2013}),
the adjoint state $p_c$ associated to a regularized problem solves
the semilinear equation
\begin{equation*}
	-\Delta p_c + c \, \max\nolimits_c'( \bar \lambda + (y_c - \psi)) \, p_c = J_y(y_c, u_c).
\end{equation*}
Here, $c > 0$ is a penalty parameter which will go to $\infty$.
Since the function $\max\nolimits_c$ is monotonically increasing,
we have
$c \, \max\nolimits_c'( \bar \lambda + (y_c - \psi)) \in \MM_0(\Omega)$.
\cref{thm:compactness} implies
that (along a subsequence)
$c \, \max\nolimits_c'( \bar \lambda + (y_c - \psi)) \togamma \mu$
for some $\mu \in \MM_0(\Omega)$
as $c \to \infty$.
Thus, the weak convergence $p_c \weakly p$ in $H_0^1(\Omega)$,
together with $y_c \to y$ in $H_0^1(\Omega)$
and $u_c \to u$ in $L^2(\Omega)$,
yields that the limit $p$ satisfies
\begin{equation*}
	-\Delta p + \mu \, p = J_y(y, u).
\end{equation*}
Hence, $\nu = p \, \mu$ in the sense of \cref{lem:opt_con_pBsw}.
This reasoning
and the results of \cite{SchielaWachsmuth2013}
imply
that \eqref{eq:opt_con_pBws}
is indeed satisfied by every local minimizer of \eqref{eq:optimal_control},
whenever \eqref{est:equality_pBsw} holds.

\section{Conclusion}
\label{sec:concl}
In this work we have shown that
the generalized derivatives
of the solution operator $S$ of the obstacle problem
are solution operators of relaxed Dirichlet problems.
In the case that the strong operator topology is considered,
the limit is a solution operator associated
to a quasi-open subset of $\Omega$,
whereas the usage of the weak operator topology
needs the notion of solution operators associated with capacitary measures.
By considering optimality systems
corresponding to the generalized derivatives of $S$,
we have seen that the notion of C-stationarity
from \cite{SchielaWachsmuth2013}
can be strengthened to a system including a capacitary measure.

\subsection*{Acknowledgments}
The authors would like to thank Giuseppe Buttazzo for pointing out
the result of \cite[Lemma~4.12]{BucurButtazzoTrebeschi1999}
which led to the discovery of \cref{thm:gamma_limit_of_sum_of_measures}.
Further, Constantin Christof
pointed out the result of \cref{thm:torsion_function_properties}
($\fsupp(1 + \Delta w) =_q \Omega \setminus O$ for $w = L_O(1)$)
and this is gratefully acknowledged.
He gave a different, interesting proof based on differentiability properties
of the obstacle problem and this proof might appear elsewhere.

This work is supported by DFG grants
UL158/10-1
and
WA3636/4-1
within the
\href{https://spp1962.wias-berlin.de}{Priority Program SPP 1962}
(Non-smooth and Complementarity-based Distributed Parameter Systems: Simulation and Hierarchical Optimization).

\ifbiber
	\printbibliography
\else
	\bibliographystyle{plainnat}
	\bibliography{references}
\fi
\end{document}